\numberwithin{equation}{section}
\numberwithin{figure}{section}
\theoremstyle{plain}
\newtheorem{thm}{\protect\theoremname}[section]
\theoremstyle{definition}
\newtheorem{defn}[thm]{\protect\definitionname}
\theoremstyle{plain}
\newtheorem{lem}[thm]{\protect\lemmaname}
\theoremstyle{definition}
\newtheorem{example}[thm]{\protect\examplename}
\theoremstyle{plain}
\newtheorem{prop}[thm]{\protect\propositionname}
\providecommand{\definitionname}{Definition}
\providecommand{\examplename}{Example}
\providecommand{\lemmaname}{Lemma}
\providecommand{\propositionname}{Proposition}
\providecommand{\theoremname}{Theorem}
\begin{document}
\selectlanguage{british}%
\global\long\def\res{\!\restriction}%

\global\long\def\actson{\curvearrowright}%

\global\long\def\Ad{\text{Ad}}%

\global\long\def\Prob#1{\text{Prob}\left(#1\right)}%

\global\long\def\Ch#1{\text{Ch}\left(#1\right)}%

\global\long\def\Tr#1{\text{Tr}\left(#1\right)}%

\global\long\def\Trleq#1{\text{Tr}_{\leq1}\left(#1\right)}%

\global\long\def\normalizer#1#2{\text{N}_{#1}\left(#2\right)}%

\global\long\def\FC#1{\text{FC}\left(#1\right)}%

\global\long\def\Sub#1{\text{Sub}\left(#1\right)}%

\global\long\def\conn#1{#1^{0}}%

\global\long\def\explain#1#2{\underset{\underset{\mathclap{#2}}{\downarrow}}{#1}}%

\global\long\def\Ind#1#2{\text{Ind}_{#2}^{#1}}%

\global\long\def\Res#1#2{\text{Res}_{#2}^{#1}}%

\global\long\def\N{\mathbb{N}}%
\global\long\def\Z{\mathbb{Z}}%
\global\long\def\Q{\mathbb{Q}}%
\global\long\def\R{\mathbb{R}}%
\global\long\def\C{\mathbb{C}}%
\global\long\def\H{\mathbb{H}}%
\global\long\def\T{\mathbb{T}}%
\global\long\def\P{\mathbb{P}}%
\global\long\def\K{\mathbb{K}}%

\global\long\def\bG{\mathbf{G}}%
\global\long\def\bP{\mathbf{P}}%

\global\long\def\BH{\mathcal{B}(\mathcal{H})}%

\global\long\def\sl#1{\mathrm{SL}(#1)}%

\global\long\def\slz{SL_{n}\left(\mathbb{Z}\right)}%
\global\long\def\slr{SL_{n}\left(\mathbb{R}\right)}%

\global\long\def\conn#1{#1^{0}}%

\global\long\def\explain#1#2{\underset{\underset{\mathclap{#2}}{\downarrow}}{#1}}%

\title{Structural properties of reduced C{*}-algebras associated with higher-rank
lattices}
\author{itamar vigdorovich}
\thanks{This work was supported by NSF postdoctoral fellowship grant DMS-2402368.}
\begin{abstract}
We present the first examples of higher-rank lattices whose reduced
$C^{*}$-algebras satisfy strict comparison, stable rank one, selflessness,
uniqueness of embeddings of the Jiang--Su algebra, and allow explicit
computations of the Cuntz semigroup. This resolves a question raised
in recent groundbreaking work of Amrutam, Gao, Kunnawalkam Elayavalli,
and Patchell, in which they exhibited a large class of finitely generated
non-amenable groups satisfying these properties. Our proof relies
on quantitative estimates in projective dynamics, crucially using
the exponential mixing for diagonalizable flows $A\actson G/\Gamma$.
As a result, we obtain an effective mixed-identity-freeness property,
which, combined with V. Lafforgue's rapid decay theorem, yields the
desired conclusions.
\end{abstract}

\maketitle

\section{Introduction\protect\label{sec:Introduction}}

Let $\Gamma$ be a discrete group, and let $C_{r}^{*}(\Gamma)$ be
its reduced $C^{*}$-algebra, defined as the norm-closure of the algebra
generated by the regular representation $\lambda_{\Gamma}$. Understanding
the relationship between the group $\Gamma$ and the $C^{*}$-algebra
$C_{r}^{*}(\Gamma)$ is a fundamental problem that has been studied
extensively over the past several decades. The purpose of this paper
is to exhibit higher-rank lattices which satisfy the interesting group-theoretic
property introduced recently in \cite{2024strictcomparisonreducedgroup}
as \textit{selflessness}. Using this, we settle several open questions
regarding important structural properties of the associated reduced
$C^{*}$-algebras. For definitions and background, see the introduction
of \cite{2024strictcomparisonreducedgroup} and the references therein. 
\begin{thm}
\label{Thm:C*-algebras}Let $\Gamma$ be a cocompact lattice $\mathrm{PSL}_{3}(\K)$
where $\K$ is a local field of characteristic $0$ (thus $\K$ is
isomorphic to either $\R$, $\C$, or a finite field extension of
$\Q_{p}$). Then: 
\begin{enumerate}
\item The stable rank of $C_{r}^{*}(\Gamma)$ is $1$. 
\item $C_{r}^{*}(\Gamma)$ has strict comparison. 
\item $C_{r}^{*}(\Gamma)$ is selfless.
\item Any two embeddings of the Jiang-Su algebra $\mathcal{Z}$ into $C_{r}^{*}(\Gamma)$
are approximately unitarily conjugate. 
\item The Cuntz semigroup $\mathrm{Cu}(C_{r}^{*}(\Gamma))$ is isomorphic
to $\mathrm{V}(C_{r}^{*}(\Gamma))\sqcup[0,\infty]$ where $\mathrm{V}(C_{r}^{*}(\Gamma))$
is the Murray-von Neumann semigroup. 
\end{enumerate}
\end{thm}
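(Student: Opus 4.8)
The plan is to reduce everything to one structural fact: that $C_r^*(\Gamma)$ is \emph{selfless}. The group $\Gamma$ is $C^*$-simple with the unique trace property --- as is every higher-rank lattice, and as will in any case follow from the mixed-identity-freeness established along the way --- so $C_r^*(\Gamma)$ is a unital, simple, separable, stably finite $C^*$-algebra carrying the canonical tracial state. By the structure theory developed in \cite{2024strictcomparisonreducedgroup}, any $C^*$-algebra $B$ of this kind that is moreover selfless automatically has stable rank one, has strict comparison, has the property that any two unital embeddings of the Jiang--Su algebra into $B$ are approximately unitarily conjugate, and has $\mathrm{Cu}(B)\cong\mathrm{V}(B)\sqcup[0,\infty]$. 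Thus items (1), (2), (4) and (5) all follow from (3), and it remains to prove that $C_r^*(\Gamma)$ is selfless.

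To verify selflessness I would invoke the sufficient condition from \cite{2024strictcomparisonreducedgroup}: that $\Gamma$ have property RD together with a quantitative --- \emph{effective} --- strengthening of mixed identity freeness. Property RD for a cocompact lattice in $\mathrm{PSL}_3(\K)$ is exactly V. Lafforgue's rapid decay theorem (RD for cocompact lattices in $\mathrm{SL}_3$ over a local field, together with its stability under passing through a finite centre), so that input is available off the shelf. The content is the effective mixed-identity-freeness: for every finite tuple $\gamma_1,\dots,\gamma_m\in\Gamma$ and every reduced word $w(x_1,\dots,x_n,\gamma_1,\dots,\gamma_m)$ that is not a trivial mixed identity, one must exhibit substitutions $x_i\mapsto g_i\in\Gamma$ for which $w(g_1,\dots,g_n,\gamma_1,\dots,\gamma_m)$ is not merely nontrivial but ``large'' --- say with a definite lower bound on its Cartan projection, equivalently on its translation length --- with explicit control on the word lengths $|g_i|$ in terms of $\varepsilon$ and the given data. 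Such tuples, fed into the RD inequality (which converts the $\ell^{2}$-type estimates that the dynamics produces naturally into the operator-norm estimates demanded by selflessness), yield, for every $n$, every $\varepsilon>0$ and every finite $F\subseteq C_r^*(\Gamma)$, the $\varepsilon$-central, approximately orthogonal families of approximate isometries $s_1,\dots,s_n$ that witness selflessness.

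The core of the argument is the effective mixed-identity-freeness, carried out through quantitative projective dynamics. Write $G=\mathrm{PSL}_3(\K)$, fix a maximal $\K$-split torus $A\le G$, and use that the diagonalizable flow $A\actson G/\Gamma$ is \emph{exponentially mixing} --- a consequence of the explicit decay of matrix coefficients for $G$ (property (T) and Howe--Moore, together with effective spectral estimates available in characteristic $0$; the archimedean case handled via the symmetric space, the non-archimedean via the Bruhat--Tits building). Transferring exponential mixing to the boundary action $\Gamma\actson G/P$ (and to $\P(\K^{3})$ and its dual) yields an effective counting/equidistribution statement: there are ``many'' elements $\gamma\in\Gamma$ with Cartan projection of prescribed size whose attracting and repelling flags lie in prescribed small --- hence mutually ``generic'' --- neighbourhoods, with error terms polynomial in the parameters, precisely because the mixing rate is exponential. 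One then runs an \emph{effective ping-pong}: choosing the $g_i$ to be controlled powers of elements with suitably spread-out attracting/repelling data, the word $w(g_1,\dots,g_n,\gamma_\bullet)$ is forced to act on $G/P$ as a strong contraction with a large singular-value gap, hence to be a nontrivial element of definite size, with all estimates uniform over the coefficient tuple. The fields $\R$, $\C$ and the finite extensions of $\Q_p$ are handled on equal footing, the representation-theoretic input and the Cartan-decomposition contraction estimates being the same in each case.

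I expect the main obstacle to be the seam between the three ingredients. First, upgrading ``$A\actson G/\Gamma$ is exponentially mixing'' to a genuinely effective statement about the \emph{joint} distribution of the Cartan projection and the attracting/repelling flag of a $\Gamma$-element is delicate: the passage from homogeneous dynamics to counting group elements loses powers, and the contraction estimates degenerate along the directions where two singular values collide, so one must control that ``cusp'' of bad directions. Second, the ping-pong must be calibrated so that the resulting word lengths, contraction gaps and the number $n$ of orthogonal isometries match, \emph{quantitatively}, what the selflessness criterion of \cite{2024strictcomparisonreducedgroup} consumes once the RD inequality has been applied --- the constants really have to line up. Third, in the $C^*$-algebraic rather than purely algebraic setting the coefficients $\gamma_1,\dots,\gamma_m$ come from an arbitrary finite subset of $C_r^*(\Gamma)$, so all of the above must be uniform in the coefficient data; this uniformity is exactly what the effective, as opposed to merely qualitative, version of mixed identity freeness is for.
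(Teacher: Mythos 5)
Your top-level reduction is exactly the paper's: group selflessness (via effective mixed-identity-freeness and Lemma \ref{lem:selfless}) plus V.\ Lafforgue's rapid decay theorem gives selflessness of $C_r^*(\Gamma)$ by the criterion of \cite{2024strictcomparisonreducedgroup}, and then (1), (2), (4), (5) follow from Robert's structure theorem and its companions. Two small corrections there: the paper does not need $C^*$-simplicity and unique trace as an \emph{input} (they come out of Robert's theorem once $C^*$-selflessness is established), and your worry about uniformity over coefficients drawn from arbitrary finite subsets of $C_r^*(\Gamma)$ is misplaced --- the selflessness criterion consumes a purely group-theoretic statement (coefficients in $\Gamma$, word lengths controlled subexponentially), and it is the RD inequality alone that upgrades this to operator-norm statements about general elements of the algebra.

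The genuine gap is in how you propose to produce the effective mixed-identity-freeness. You want to transfer exponential mixing of $A\actson G/\Gamma$ into an effective equidistribution/counting statement for \emph{lattice} elements with prescribed Cartan projection and prescribed attracting/repelling flags, and then run ping-pong with those. The difficulty is that one must avoid, to exponential precision $e^{-\kappa r}$, a family of roughly $e^{\alpha r}$ bad configurations (one for each $h\in B_\Gamma(r)$); effective equidistribution with polynomial (or even exponential but fixed-rate) error terms does not survive this union bound when the target is a discrete set of lattice elements, and the paper explicitly flags this exponential blow-up as the reason the naive quantitative Zariski-density and Poincar\'e-recurrence routes fail. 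The paper's actual mechanism is different and is the main novelty: it first finds a good configuration realized by some $g\in G$ near the identity by a purely probabilistic argument over random flags (where the union bound \emph{does} close because each constraint fails with probability $e^{-c r}$ against $e^{\alpha r}$ constraints, with tunable exponents), packages the quality of the configuration into a single function $\psi_r:G\to\R_{\ge 0}$ that is right-$A$-invariant and Lipschitz with constant only $e^{4r}$, and only then uses exponential mixing of the $A$-flow to find a lattice element on which $\psi_r$ is still exponentially bounded below --- paying only a polynomial-in-$r$ loss in the exponent because the flow time needed is $O(r)$. Without this intermediate $A$-invariant Lipschitz function (or an equivalent device), the seam you yourself identify as the main obstacle does not close.
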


Each of the statements in Theorem \ref{Thm:C*-algebras} is new. These
properties all play a major role in Elliott's classification program
for amenable (or nuclear) algebras, which has seen immense success
in recent years \cite{Rordam,MatuiSato,GonglinNiu1,GongLinNiu2,White2023}.

Beyond the amenable setting, however, much less is understood, and
only in the last few years has progress accelerated. Until recently,
the only non-amenable groups known to exhibit all of these powerful
properties were infinite free products \cite{dykema1998projections,`2023selfless}.
Various works established stable rank one for hyperbolic groups---culminating
in \cite{osinstablerank,raum2025twisted} covering all acylindrically
hyperbolic groups. Then, a breakthrough in \cite{2024strictcomparisonreducedgroup}
proved that non-elementary hyperbolic groups, graph products, mapping
class groups, and others, also satisfy each of the properties appearing
in \ref{Thm:C*-algebras}; remarkably, properties $(2),(3),(4),(5)$
were not known even in the case of the free group $F_{2}$!

Notably, the aforementioned examples arise from groups acting on hyperbolic
spaces, and indeed the methods crucially exploit these geometric assumptions.
By contrast, a prominent class of groups that do not fit into this
picture are lattices in simple Lie groups of real rank at least two.
In fact, such lattices do not act on hyperbolic spaces in any substantial
way (see, for instance, \cite{bader2022hyperbolic}). Consequently,
fundamentally new ideas are needed to move beyond this barrier. Motivated
by these challenges, the authors of \cite{2024strictcomparisonreducedgroup}
specifically asked for cocompact lattice in $\mathrm{SL}_{3}(\R)$
(see the discussion following Theorem B in \cite{2024strictcomparisonreducedgroup}).
Our Theorem \ref{Thm:C*-algebras} provides exactly such examples.\\

We now turn to discuss our group theoretic result that underlies Theorem
\ref{Thm:C*-algebras}. Let $F_{n}$ denote the free group on $n\in\N$
generators $x_{1},...,x_{n}$. An element of the free product $w\in\Gamma*F_{n}$
can be viewed as a formula in the \textit{variables} $x_{1},...,x_{n}$
and with \textit{coefficients} in $\Gamma$. Given $\gamma_{1},...,\gamma_{n}\in\Gamma$
we write $w(\gamma_{1},...,\gamma_{n})$ for the element in $\Gamma$
obtained by substituting $x_{i}$ by $\gamma_{i}$ and applying the
multiplication in $\Gamma$. If $w(\gamma_{1},...,\gamma_{n})=1$
for all $\gamma_{1},...,\gamma_{n}\in\Gamma$ then $w$ is said to
be a \textit{mixed identity }of $\Gamma$. For instance, the word
$x\gamma x^{-1}\gamma^{-1}\in\Gamma*\Z$ is a mixed identity if and
only if $\gamma$ is central. A group is called\textit{ mixed identity
free (MIF)} if it has no mixed identities except for the trivial one
$e\in\Gamma*F_{n}$. This property is significantly stronger than
having no \textit{identities} (in which case $w$ is required to belong
to $F_{n}$). It is not hard to see that the MIF property, as well
as its quantitative variant below, remains unchanged if one restricts
to one variable $n=1$. 

Given a MIF group $\Gamma$ and a non-trivial word $w\in\Gamma*F_{n}$,
one may ask ``how far'' in $\Gamma$ one must search for elements
that violate the equation $w(x_{1},...,x_{n})=1$. The following result
establishes that cocompact lattices in $\mathrm{PSL}_{d}$ are MIF
and, moreover, exhibit this property with a \textit{linear rate}.\footnote{The same result holds for $\mathrm{SL}_{d}$ as long as one requires
the coefficients to be non-central.} 
\begin{thm}
\label{Thm:effective=000020MIF}Fix $d\geq2$, and a local field\footnote{Any local field is isomorphic to either the reals $\R$, the $p$-adics
$\Q_{p}$, Laurent polnomials $\mathbb{F}_{p}((t))$, or to a finite
field extension of such.} $\K$. Given a cocompact lattice $\Gamma$ in $\mathrm{PSL}_{d}(\K)$,
and a fixed finite generating set $S$ on it, there is a constant
$C=C(\Gamma,S)>1$ such that the following holds. For any $r>0$ and
$n\in\N$ there exists elements $\gamma_{1},...,\gamma_{n}\in\Gamma$
such that
\begin{enumerate}
\item $w(\gamma_{1},...,\gamma_{n})\ne e$ for any $e\ne w\in\Gamma*F_{n}$
whose coefficients are of word-length at most $r$, and,
\item The word-length of each $\gamma_{i}$ is at most $C\cdot r$. 
\end{enumerate}
\end{thm}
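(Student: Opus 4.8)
The plan is to recast the statement as effective ping-pong on projective space, with exponential mixing of a diagonalizable flow furnishing the quantitative input. By the discussion above the property is insensitive to the number of variables, so it suffices to produce, for each $r>0$, a single $\gamma\in\Gamma$ with $|\gamma|_{S}\leq Cr$ such that $w(\gamma)\neq e$ for every reduced $e\neq w=g_{0}x^{\epsilon_{1}}g_{1}\cdots x^{\epsilon_{k}}g_{k}$ in one variable over $\Gamma$ whose coefficients all have word-length $\leq r$. Fix a metric on $\P(\K^{d})$. By the Milnor--\v{S}varc lemma the word metric of the cocompact lattice $\Gamma$ is comparable to the restriction of a proper length $\|\cdot\|$ on $G=\mathrm{PSL}_{d}(\K)$; in particular there are at most $|S|^{r}$ admissible coefficients, and each of them acts on $\P(\K^{d})$ as a bi-Lipschitz map with constant at most $e^{C_{0}r}$.

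The engine is a quantitative ping-pong lemma phrased via $(\epsilon,\delta)$-proximality, in the sense of Abels--Margulis--Soifer and Benoist: a proximal $\gamma$ carries an attracting point $x_{\gamma}^{+}$ and a repelling hyperplane $H_{\gamma}^{-}$, it sends $\P(\K^{d})$ off the $\delta$-neighbourhood of $H_{\gamma}^{-}$ into the $\epsilon$-ball about $x_{\gamma}^{+}$, and one may take $\epsilon\delta\asymp\sigma_{2}(\gamma)/\sigma_{1}(\gamma)=e^{-\mathrm{gap}(\gamma)}$; likewise for $\gamma^{-1}$, with data $y_{\gamma}^{+},K_{\gamma}^{-}$. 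I would establish: if $\gamma$ and $\gamma^{-1}$ are $(\epsilon,\delta)$-proximal with $e^{C_{0}r}\epsilon<\delta$, and if $d(g\cdot p,H)>2\delta$ for all $p\in\{x_{\gamma}^{+},y_{\gamma}^{+}\}$, all $H\in\{H_{\gamma}^{-},K_{\gamma}^{-}\}$ and all admissible coefficients $g$, then $w(\gamma)\neq e$ for every reduced $w$ as above. The proof is orbit tracking read from the right: one first fixes a base point $p\in\P(\K^{d})$ lying outside the finitely many small balls about $g_{0}x_{\gamma}^{+},g_{0}y_{\gamma}^{+}$ and outside the thin slabs of width $e^{C_{0}r}\delta$ around $g_{k}^{-1}H_{\gamma}^{-},g_{k}^{-1}K_{\gamma}^{-}$, over all admissible $g_{0},g_{k}$ --- possible because these sets have total measure at most $|S|^{r}e^{O(r)}\delta$, which is less than $1$ once $\delta$ is exponentially small in $r$ with a sufficiently large rate --- and then checks inductively that applying each letter $x^{\pm1}$ drives the running point into the $\epsilon$-ball about $x_{\gamma}^{+}$ or $y_{\gamma}^{+}$, so that $w(\gamma)p$ ends up within $\delta$ of $g_{0}x_{\gamma}^{+}$ or $g_{0}y_{\gamma}^{+}$; in particular $w(\gamma)p\neq p$, hence $w(\gamma)\neq e$.

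It remains to manufacture such a $\gamma$ with $|\gamma|_{S}\lesssim r$: one that is strongly proximal with $\mathrm{gap}(\gamma)$ a large multiple of $r$ (so that $\delta$ can be taken below the threshold needed above) and whose data $x_{\gamma}^{+},H_{\gamma}^{-},y_{\gamma}^{+},K_{\gamma}^{-}$ lie in $2\delta$-general position with respect to all $\leq|S|^{r}$ admissible coefficients --- equivalently, $x_{\gamma}^{+}$ and $y_{\gamma}^{+}$ must avoid the $O(e^{C_{0}r}\delta)$-neighbourhoods of the hyperplanes $g^{-1}H_{\gamma}^{-}$ and $g^{-1}K_{\gamma}^{-}$, a union of total measure $<1$. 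This is where exponential mixing is essential. Fix a one-parameter $\K$-diagonalizable subgroup $A=\{a_{t}\}$ of $G$ with simple top eigenvalue, normalized so $\|a_{t}\|\asymp t$; then $a_{t}$ is strongly proximal, with fixed attracting point $p^{+}$, repelling hyperplane $W^{-}$, and $\mathrm{gap}(a_{t})\asymp t$. Since $\Gamma$ is cocompact, $G/\Gamma$ is compact and the $A$-action on it is exponentially mixing --- for $d\geq3$ by the decay of matrix coefficients coming from property (T), and for $d=2$ by the spectral gap of the cocompact lattice. Exponential mixing forces the $a_{t}$-translates of fixed-size pieces of the expanding horospherical foliation of $A$ to equidistribute in $G/\Gamma$ with an error exponentially small in $t$; reading this off near $e\Gamma$ produces, for $t\asymp r$, an element $\gamma\in\Gamma$ with $\|\gamma\|\asymp t$, with $\mathrm{gap}(\gamma)\asymp t$, and with $(x_{\gamma}^{+},H_{\gamma}^{-})$ prescribable --- within an order-one region of $\P(\K^{d})\times\mathrm{Gr}_{d-1}(\K^{d})$, and to precision $e^{-\kappa_{0}t}$ --- by varying $\gamma$ inside the equidistributing orbit piece. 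Choosing the constant $M$ with $t=Mr$ large enough that $e^{-\kappa_{0}Mr}$ beats $e^{C_{0}r}\delta$ and that $\mathrm{gap}(\gamma)\asymp Mr$ exceeds the multiple of $r$ required above, and using that the excluded slabs have small total measure to pick an admissible target, yields a $\gamma$ satisfying every hypothesis of the ping-pong criterion, with $|\gamma|_{S}\leq CMr$.

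The hard part, and in my estimation the crux, is that every estimate must be made effective and \emph{uniform over the exponentially many} coefficients of length $\leq r$ --- this is where the Milnor--\v{S}varc comparison and the uniform Lipschitz bound $e^{C_{0}r}$ earn their keep --- and, above all, the effective-equidistribution error must be beaten by the general-position margin in a way that keeps the word-length of $\gamma$ genuinely \emph{linear} in $r$, which is the optimality claimed in the theorem. It is precisely here that the \emph{exponential} rate of mixing cannot be dispensed with: with a merely subexponential rate one would be forced to take $t$, hence $|\gamma|_{S}$, superlinear in $r$. Correspondingly the diagonalizable flow $A$ (and, if the bookkeeping demands it, a suitable conjugate of it) must be selected with the geometric constants in view. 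This step is where I expect essentially all the technical effort to concentrate; granting it, the theorem --- including the sharp linear dependence on $r$ --- follows, with the spectral gap of a higher-rank cocompact lattice as the one feature that makes the whole argument possible.
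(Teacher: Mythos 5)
Your architecture matches the paper's: reduce to one variable, run a quantitative ping-pong criterion against the $e^{O(r)}$ coefficients of length $\leq r$, choose the proximal data by a union bound over exponentially small failure probabilities, and use exponential mixing of a diagonalizable flow on $G/\Gamma$ to land on a lattice element of linear word length. However, the two steps you flag as "the crux" are exactly where your sketch has genuine gaps.

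First, the general-position estimate is not a pure slab-measure bound. Among the constraints you impose is $d(g\cdot y_{\gamma}^{+},H_{\gamma}^{-})>2\delta$, where $y_{\gamma}^{+}$ (the attracting point of $\gamma^{-1}$) necessarily \emph{lies in} $H_{\gamma}^{-}$ (the repelling hyperplane of $\gamma$); likewise $x_{\gamma}^{+}\in K_{\gamma}^{-}$. For a correlated pair $([v],[W])$ with $v\in W$, the probability that $[gv]$ is $\epsilon$-close to $[W]$ is not controlled by the measure of an $\epsilon$-neighbourhood of a fixed hyperplane: if $g$ nearly stabilizes the flag the distance is nearly zero regardless of $\epsilon$. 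The paper handles this by proving a quantitative displacement bound for random flags ($\bP[d_P([hv],[W])<\epsilon]\lesssim\|h\|\epsilon^{1/4}$), which rests on a uniform lower bound for the spectral spread $\theta(h)$ over all non-central $h\in\Gamma$ --- and that lower bound uses cocompactness essentially (semisimplicity of lattice elements plus proper discontinuity on $G/K$). Your union bound, as written, treats every constraint as a random point avoiding an independent thin slab and therefore does not cover the correlated case.

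Second, your conversion of mixing into a lattice element is circular as stated. You want $\gamma\in\Gamma$ whose attracting points avoid the $e^{O(r)}\delta$-neighbourhoods of the hyperplanes $g^{-1}H_{\gamma}^{-}$, but those hyperplanes depend on $\gamma$ itself; resolving this requires prescribing the \emph{entire} quadruple $(x_{\gamma}^{+},H_{\gamma}^{-},y_{\gamma}^{+},K_{\gamma}^{-})$ of a single lattice element to exponential precision, and "equidistribution of expanding horospherical pieces" does not obviously furnish such a parametrization. The paper's device is different and is the key missing idea: it first finds $g\in G$ of \emph{bounded} size realizing a pre-chosen good configuration (independent of any group element), packages all $e^{O(r)}$ constraints into one function $\psi_{r}(g)=\min_{h}d_{P}(hg\,\mathrm{Att}^{\pm}(a_{0}),g\,\mathrm{Rep}^{\pm}(a_{0}))$ that is $e^{O(r)}$-Lipschitz and, crucially, right-invariant under the full centralizer $A$ of a fixed proximal $a_{0}\in\Gamma$, and then uses mixing only to produce $\gamma=g_{0}s^{-1}a_{t_{0}}^{-1}s'\in\Gamma$ with $|t_{0}|\lesssim r$; the $A$-invariance absorbs the large factor $a_{t_{0}}$, and the final $r$-free element is the conjugate $\gamma a_{0}^{2qr}\gamma^{-1}$, whose proximal data is $\gamma.\mathrm{Att}^{\pm}(a_{0})$ by construction. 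Without some such mechanism your step "with $(x_{\gamma}^{+},H_{\gamma}^{-})$ prescribable to precision $e^{-\kappa_{0}t}$" is an unproven assertion, and it is precisely the assertion on which the linear bound depends.
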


To illustrate the significance of Theorem \ref{Thm:effective=000020MIF},
note the following most basic consequence: For any $r>0$, there exists
an element $\gamma$ of word-length at most $C\cdot r$ which commutes
with no element of $\Gamma$ of length $\leq r$ other than the trivial
element. Different choices of $w$ lead to different consequences
of this sort. Indeed, see \cite{hull2016transitivity} for other implications
of MIF.

It is impossible to obtain a sublinear word-length bound, or even
a linear bound with $C=1$. This is because, e.g., the word $\gamma x^{-1}\in\Gamma*\Z$
is satisfied by $\gamma$. In this sense, this result is optimal.
We do not attempt to estimate the constant $C$, but we suspect that
in the case $d\geq3$ it depends only on $G$, not on $\Gamma$. If
$d=2$, the lattice $\Gamma$ is hyperbolic, in which case similar
statements are shown in \cite{2024strictcomparisonreducedgroup} with
polynomial rather than linear bounds. This was improved to a linear
bound in the new result \cite{bradford2025non}. \\

After posting the first version of this work, we learned of several
recent and ongoing relevant results. In \cite{bradford2024length},
arithmetic methods are used to obtain linear MIF bounds for Zariski
dense subgroups that are defined over their trace fields. In an ongoing
work, Avni and Gelander are developing a strengthened super-approximation
theorem which will show that once a linear group is MIF, linear bounds
follow. Finally, Becker and Breuillard have announced their comprehensive
concentration-of-measure framework for algebraic subvarieties of semisimple
groups which, in particular, implies linear MIF bounds. 

We note that the results cited above produce, for each $r$, an element
of length $\leq Cr$ that violates only \textit{finitely} many words,
namely those $w\in\Gamma*\Z$ whose word length with respect to $S\cup\{x\}$
is $\leq r$. By contrast, Theorem \ref{Thm:effective=000020MIF}
yields an element that violates all words $w\in\Gamma*\Z$ whose \textit{coefficients}
each have word length $\leq r$ (with respect to $S$), with no bound
on the length of $w$. In particular, \textit{infinitely} many constrains
are violated simultaneously. This difference reflects our dynamical
approach:
\begin{enumerate}
\item We develop effective versions of key ingredients in the Tits Alternative.
This is useful for other ``quantitative freeness'' results in linear
groups, including work in progress.
\item We exploit the exponential mixing of the geodesic flow (or, in higher-rank,
the torus flow), established by Kleinbock and Margulis.
\end{enumerate}
A detailed outline of the argument appears at the end of this section.
\\

Theorem \ref{Thm:C*-algebras} is deduced from Theorem \ref{Thm:effective=000020MIF}
as follows. The quantitative version of the MIF property established
in Theorem \ref{Thm:effective=000020MIF} is a strengthening of the
notion of a \textit{selfless group} introduced in \cite{2024strictcomparisonreducedgroup}
(see Definition \ref{def:selfless} and Lemma \ref{lem:selfless}
below). The key idea in \cite{2024strictcomparisonreducedgroup} (inspired
by \cite{louder2025strongly}) is that if a group is both selfless,
and satisfies the \textit{rapid decay} property, then $C_{r}^{*}(\Gamma)$
is a\textit{ selfless $C^{*}$-algebra} as introduced by Robert \cite{`2023selfless}.
The insight in \cite{`2023selfless} is that selfless $C^{*}$-algebras
mimic infinite free products from a model-theoretic point of view;
as a result, methods from free probability can be employed to obtain
strict comparison along with the other structural properties appearing
in Theorem \ref{Thm:C*-algebras}.

Thus, the rapid decay property is a crucial ingredient for our proof
of Theorem \ref{Thm:C*-algebras}. This property was initially studied
by Haagerup \cite{HaagerupGod} and later by Jolissaint \cite{Jolissaint},
and is tightly connected to the Baum-Connes conjecture, see \cite{valette2002introduction,sapir2015rapid,chatterji2017introduction}.
While known to hold for hyperbolic groups and certain generalizations,
the rapid decay property remains obscure in the realm of lattices
in semisimple Lie groups. It was shown in \cite{lafforgue} that cocompact
lattices in $\mathrm{SL}_{3}(\R)$ and $\mathrm{SL}_{3}(\C)$ satisfy
the rapid decay property, following the results of \cite{ramagge1998haagerup}
covering the non-Archimedean case. This is precisely the reason why
in Theorem \ref{Thm:C*-algebras} we restrict to the group $\mathrm{SL}_{3}$.
Yet, Valette conjectured that all cocompact lattices in semisimple
Lie groups have the rapid decay property \cite[Conjecture 7]{valette2002introduction}.
Despite much effort, this conjecture remains open except in some special
cases \cite{ramagge1998haagerup,lafforgue,chatterji2003property},
and perhaps, the present available $C^{*}$-algebraic implications
may serve as an additional encouragement towards a resolution. 

In contrast, non-cocompact irreducible lattices in higher-rank semisimple
Lie groups never satisfy the rapid decay property. A natural question
arising from our work is whether non-cocompact higher-rank lattices,
e.g. $\mathrm{SL}_{3}(\Z)$, satisfy the properties given in Theorem
\ref{Thm:C*-algebras} (or even just one them). In the absence of
the rapid decay property, a new idea is required. 

\subsection*{Outline}

The proof of Theorem \ref{Thm:effective=000020MIF} follows the foundational
framework laid by Tits, where it is shown that every finitely generated
linear group is either virtually solvable or contains a free subgroup
\cite{tits1972free} (see \cite{cohen2024invitation} for a complete
and elegant proof). The method for identifying a free subgroup is
to consider a suitable linear representation, and seek for a pair
of elements which ``play ping-pong'' in the projective action. This
influential result has been extended in many directions, including
\cite{bestvina2000tits,sageev2005tits,breuillard2007topological,bars2024tits},
to name just a few. See also \cite{eskin2001uniform,breuillard2008uniform}
for effective results of a different flavor than those in this paper.

In our setting, however, we are not seeking free subgroups per se.
Instead, we want a single element that is \textquotedblleft free\textquotedblright{}
from all elements in the ball $B_{\Gamma}(r)$ of radius $r$ in $\Gamma$
(though not necessarily from products of those elements). This is
reminiscent of the ``simultaneous ping-pong pairs'' problem \cite{bekka1994some},
but we are unaware of any implication between the two. Section \ref{sec:Free-independences}
formalizes the relevant notion of freeness along with suitable ping-pong
lemma.

In Section \ref{sec:Non-effective-freeness} we apply \textquotedblleft soft\textquotedblright{}
arguments to obtain (non-quantitative) the MIF property for centre-free
Zariski dense subgroups of $\mathrm{SL}_{d}$. This result is well
known, but we present this short proof as an exposition for the rest
of the article. 

To achieve a quantitative result, we must carefully track dynamical
and geometric properties with explicit bounds. Thus, in Section \ref{sec:Freeness-criteria-for},
we formulate a criterion (Lemma \ref{lem:dynamical-criterion}) for
an element $g\in G$ to be free from $B_{\Gamma}(r)$. 

Even finding an element $g\in G$--let alone an element $\gamma\in\Gamma$--that
is of bounded size and is free from $B_{\Gamma}(r)$, poses a challenge.
The reason is that balls in $\Gamma$ are of exponential cardinality,
leading to exponentially many geometric constraints to satisfy effectively.
The approach we take is geometric and probabilistic: we consider a
single element $h\in\Gamma$ and show that, with exponentially high
probability, a ``generic'' configuration of points and hyperplanes
(later to become attracting and repelling loci) is well positioned
relative to $h$. A union bound then guarantees the existence of a
single configuration that works for all $h\in B_{\Gamma}(r)$, from
which we deduce the existence of the desired element $g\in G$. This
is covered in Section \ref{sec:finding=000020r=000020free=000020element=000020in=000020G},
and is where most of the technical difficulty of this paper lies. 

The next challenge is to replace $g$ with a lattice element $\gamma\in\Gamma$.
The Zariski density of $\Gamma$ in $G$ used in the non-quantitative
analysis can be made quantitative (e.g via \cite[Proposition 3.2]{eskin2001uniform}),
but in a way which results in an exponential blow-up (again, due to
the exponential number of constraints). Another standard approach
is to use Poincaré recurrence to find a power $g^{t}$ near a lattice
point, but maintaining full quantitative control, requires $g^{t}$
to be exponentially close to a lattice point, which typically takes
an exponential amount of time $t$. 

We overcome this by defining a function $\psi:G\to\R_{\geq0}$ that
measures how effectively an element $g\in G$ ``plays ping-pong''
with elements of the ball $B_{\Gamma}(r)$ (Definition \ref{def:geometric=000020function}).
Crucially, $\psi$ is Lipschitz and invariant under the subgroup of
diagonal matrices $A\leq G$. In Section \ref{sec:find=000020r-free=000020element=000020in=000020Gamma},
we consider the homogeneous dynamical system $A\actson G/\Gamma$
and apply Kleinbock-Margulis's exponential mixing theorem (and its
extensions) to the function $\psi$. It follows that, after a short
time, the $A$-flow starting near $g\Gamma\in G/\Gamma$ will pass
exponentially close to $e\Gamma\in G/\Gamma$, the identity coset.
Exploiting both the Lipschitz property and the $A$-invariance of
$\psi$, we then extract a lattice element $\gamma\in\Gamma$ of linear
size whose $\psi$-value is approximately that of $g$. This final
step completes the proof of Theorem \ref{Thm:effective=000020MIF}. 

Section \ref{sec:selfless-reduced--algebras} is devoted to spelling
out the details needed to deduce Theorem \ref{Thm:C*-algebras} from
Theorem \ref{Thm:effective=000020MIF}.

\subsection*{Acknowledgments }

This work would not have been possible without the support of many
individuals. I would like to thank Tsachik Gelander for his guidance,
Sri Kunnawalkam Elayavalli for his continuous encouragement and advice,
and Gregory Patchell for insightful discussions. I extend my gratitude
to Amir Mohammadi for his help regarding effective results in homogeneous
dynamics, as well as to Andreas Wieser for valuable correspondence
on the subject. I am very grateful to Tianyi Zheng and Ido Grayevski
for their thought-provoking conversations, and for their affirmations.
A special thanks is extended to Uri Bader and to Adrian Ioana for
past and present mentorship.

\section{Free independences\protect\label{sec:Free-independences}}

Let $\Gamma$ be a discrete group with a fixed generating set $S\subset\Gamma$.
Denote by \textbf{$B_{\Gamma}(r)$} the ball of radius $r$ with respect
to the word-length associated with $S$. Fix a generator $x$ for
the infinite cyclic group $\Z$, and consider the free product $\Gamma*\Z$.
Then $S\cup\{x\}$ is a generating set for $\Gamma*\Z$ and we denote
by $B_{\Gamma*\Z}(r)$ the corresponding $r$-ball. The following
notion was introduced in \cite[Definition 3.1]{2024strictcomparisonreducedgroup}
\begin{defn}
\label{def:selfless}A group $\Gamma$ endowed with a finite generating
set $S\subset\Gamma$ is \textit{selfless} if there is a function
$f:\N\to\R$ with $\liminf_{r}f(r)^{1/r}=1$ such that the following
holds: for any $r\in\N$, there is an epimorphism $\phi_{r}:\Gamma*\Z\to\Gamma$
with $\phi_{r}\mid_{\Gamma}=\mathrm{Id}_{\Gamma}$, $\phi_{r}$ is
injective on $B_{\Gamma*\Z}(r)$, and $\phi_{r}\left(B_{\Gamma*\Z}(r)\right)\subset B_{\Gamma}(f(r))$. 
\end{defn}

There is an equivalent way to understand this definition. An element
$w\in\Gamma*\Z$ can be written uniquely as
\begin{equation}
w=g_{1}\cdot...\cdot g_{l}\label{eq:word}
\end{equation}
for $l\in\N\cup\{0\}$, and where each $g_{i}$ is a non-trivial element
of either $\Gamma$ or $\Z$, with $g_{i}\in\Gamma\iff g_{i+1}\in\Z$.
Each $g_{i}$ that belongs to $\Gamma$ is referred to as a \textit{coefficient}
of $w$. An element $g_{i}$ which belongs to $\Z$ may be written
as $g_{i}=x^{k_{i}}$ for some $k_{i}\in\Z\backslash\{0\}$. We refer
to $x$ as the \textit{variable} of $w$. We refer to $w$ as a word;
it is said to be trivial if it is the identity element of $\Gamma*\Z$,
or equivalently if $l=0$ in (\ref{eq:word}). For $\gamma\in\Gamma$
we denote $w(\gamma)$ the element of $\Gamma$ obtained by replacing
each appearance of $x$ with $\gamma$, and applying the multiplication
law in $\Gamma$. We say that $\gamma$ \textit{satisfies} $w$ if
$w(\gamma)=e$, and \textit{violates} otherwise. 
\begin{defn}
\label{def:r-free}Let $\Gamma$ be a group.
\begin{enumerate}
\item An element $\gamma\in\Gamma$ is said to be \textit{freely independent}
from a set $F\subset\Gamma$ if it violates any non-trivial word $w\in\Gamma*\Z$
all of whose coefficients lie in $F\backslash\{e\}$. 
\item Suppose $\Gamma$ is endowed with a length function $\Gamma\to\N\cup\{0\}$.
Then $\gamma\in\Gamma$ is said to be $r$-free, for some $r>0$,
if $\gamma$ is freely independent from the set of element in $\Gamma$
of length at most $r$.
\end{enumerate}
\end{defn}

Note that any $\gamma\in\Gamma$ gives rise to the homomorphism 
\[
\phi:\Gamma*\Z\to\Gamma,\qquad w\mapsto w(\gamma),
\]
and conversely, from any homomorphism $\phi:\Gamma*\Z\to\Gamma$ we
get an element $\gamma=\phi(x)$. This sets a bijection between $\Gamma$
and the set of epimorphisms $\phi:\Gamma*\Z\to\Gamma$ satisfying
$\phi\mid_{\Gamma}=\mathrm{Id}_{\Gamma}$. Moreover, if an element
$\gamma\in\Gamma$ is $r$-free, then the corresponding map $\phi$
is injective on the ball of radius $r$. Indeed, if $w,w'\in B_{\Gamma*\Z}(r)$
satisfy $\phi(w)=\phi(w')$ then $\left(w^{-1}w'\right)(\gamma)=\phi(w^{-1}w')=e$
which implies that $w^{-1}w'$ is trivial because $\gamma$ is $r$-free.
Furthermore, it is not hard to see that $\phi(B_{\Gamma*\Z}(r))\subset B_{\Gamma}(r|\gamma|)$.
The following lemma therefore follows. 
\begin{lem}
\label{lem:selfless}Let $\Gamma$ be a group endowed with some finite
generating set $S$, and let $f:\N\to\R$ be a function satisfying
$\liminf_{r}f(r)^{1/r}$. Suppose that for any $r\geq0$ there is
an $r$-free element $\gamma_{r}\in\Gamma$ with $|\gamma_{r}|\leq f(r)$.
Then $\Gamma$ is selfless.
\end{lem}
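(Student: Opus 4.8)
The plan is to verify the three defining conditions of selflessness directly from the hypothesis, using the bijection between elements of $\Gamma$ and epimorphisms $\phi:\Gamma*\Z\to\Gamma$ fixing $\Gamma$ pointwise that was set up just before the statement. Given $r\in\N$, take the $r$-free element $\gamma_r$ supplied by the hypothesis, and let $\phi_r:\Gamma*\Z\to\Gamma$ be the associated homomorphism $w\mapsto w(\gamma_r)$. It is surjective since its image contains all of $\Gamma=\phi_r(\Gamma)$, and by construction $\phi_r\res_\Gamma=\mathrm{Id}_\Gamma$.

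Next I would record injectivity of $\phi_r$ on $B_{\Gamma*\Z}(r)$: if $w,w'\in B_{\Gamma*\Z}(r)$ have $\phi_r(w)=\phi_r(w')$, then $(w^{-1}w')(\gamma_r)=\phi_r(w^{-1}w')=e$; since every coefficient appearing in the reduced form of $w^{-1}w'$ has length at most $r$ (the coefficients of $w,w'$ do, and reduction in the free product only deletes or multiplies existing coefficients from the two factors — here one should note a coefficient of $w^{-1}w'$ is either a coefficient of one of them or a product of a coefficient of $w^{-1}$ with one of $w'$, each of length $\le 2r$; so strictly one wants $\gamma_r$ to be $2r$-free, or alternatively apply the hypothesis at radius $2r$ and relabel), $r$-freeness forces $w^{-1}w'$ to be trivial, i.e. $w=w'$. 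I will simply invoke the hypothesis at radius $2r$ to be safe, which costs nothing asymptotically. Finally, for the image bound: any $w\in B_{\Gamma*\Z}(r)$ is a product of at most $r$ generators from $S\cup\{x\}$, so $w(\gamma_r)$ is a product of at most $r$ elements, each either in $S$ or equal to $\gamma_r^{\pm1}$; hence $|\phi_r(w)|\le r\cdot\max(1,|\gamma_r|)\le r\cdot f(r)$ (for $r\ge 1$), and setting $\tilde f(r):=r\cdot f(r)$ gives $\phi_r(B_{\Gamma*\Z}(r))\subset B_\Gamma(\tilde f(r))$.

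It remains to check that $\tilde f$ satisfies the growth condition $\liminf_r \tilde f(r)^{1/r}=1$: since $\tilde f(r)^{1/r}=r^{1/r}f(r)^{1/r}$ and $r^{1/r}\to 1$, this follows immediately from $\liminf_r f(r)^{1/r}=1$ (and $\tilde f(r)\ge 1$ gives the lower bound). Assembling these three facts — $\phi_r$ epimorphism fixing $\Gamma$, injective on $B_{\Gamma*\Z}(r)$, with controlled image — verifies Definition \ref{def:selfless} with the function $\tilde f$, so $\Gamma$ is selfless.

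I do not anticipate a genuine obstacle here; the statement is essentially a bookkeeping repackaging of the paragraph preceding it, and everything needed has already been spelled out there. The only point requiring a moment's care is the coefficient-length accounting in the injectivity step (coefficients of $w^{-1}w'$ can be products of two coefficients of length $\le r$, hence of length $\le 2r$), which is why I would phrase the argument so as to apply the hypothesis at radius $2r$ rather than $r$; this is harmless for the $\liminf$ condition. A secondary minor point is handling the degenerate cases $r=0$ and $|\gamma_r|=0$, both trivial.
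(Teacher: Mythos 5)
Your argument is correct and is essentially the paper's own: the lemma is stated there as an immediate consequence of the preceding paragraph, which constructs $\phi_r(w)=w(\gamma_r)$, checks injectivity on $B_{\Gamma*\Z}(r)$ via $r$-freeness of $\gamma_r$, and bounds the image by $B_{\Gamma}(r|\gamma_r|)$, exactly as you do. Your observation that a coefficient of the reduced word $w^{-1}w'$ may be a product of two coefficients and hence have length up to $2r$, so that one should really invoke the hypothesis at radius $2r$, is a legitimate refinement that the paper elides; it is indeed harmless because Definition \ref{def:selfless} only requires $\liminf_r f(r)^{1/r}=1$ along a subsequence, which survives the reindexing.
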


We proceed with the following variant of the ping-pong lemma where
one ``player'' is an arbitrary set of elements, and the other ``player''
is a cyclic subgroup. 
\begin{lem}[Ping-Pong Lemma]
\label{prop:pingpong} Let $F$ be a subset of a group $\Gamma$,
and let $\gamma\in\Gamma$ of infinite order. Suppose that $\Gamma$
acts on some set $P$, and that there exists two non-empty disjoint
subsets $A$ and $B$ of $P$, such that, $\gamma^{k}.B\subset A$
for all $k\in\Z\backslash\{0\}$, and, $h.A\subset B$ for all $h\in F\backslash\{e\}$.
Then $\gamma$ is freely independent from $F$. 
\end{lem}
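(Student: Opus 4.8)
The plan is to show that any nontrivial word $w \in \Gamma * \Z$ with coefficients in $F \backslash \{e\}$ maps the set $B$ into $A$ under the action of $w(\gamma)$, and hence $w(\gamma) \ne e$ since $A$ and $B$ are disjoint and nonempty. Write $w$ in its reduced syllable form as in \eqref{eq:word}, and first reduce to the case where $w$ both begins and ends with a power of $x$. Indeed, if $w$ begins or ends with a coefficient $h \in F \backslash \{e\}$, one can conjugate: there is a cyclic reduction $w = u v u^{-1}$ (possibly trivializing if $w$ is itself a power of $x$, handled by hypothesis) where $v$ begins and ends with a nonzero power of $x$; then $w(\gamma) = e$ iff $v(\gamma) = e$, and $v$ still has coefficients in $F \backslash \{e\}$. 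So without loss of generality $w = x^{k_0} h_1 x^{k_1} h_2 \cdots h_m x^{k_m}$ with all $k_i \in \Z \backslash \{0\}$ and all $h_j \in F \backslash \{e\}$ (if $m = 0$ this is just a nonzero power of $\gamma$, which is nontrivial since $\gamma$ has infinite order, and moreover $\gamma^{k_0}.B \subset A$ directly).

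Next I would run the ping-pong argument by evaluating $w(\gamma).B$ from the inside out. Reading the word $w(\gamma) = \gamma^{k_m} h_m \gamma^{k_{m-1}} \cdots h_1 \gamma^{k_0}$ acting on $B$: the rightmost factor $\gamma^{k_0}$ sends $B$ into $A$ (since $k_0 \ne 0$); then $h_1 \in F \backslash \{e\}$ sends $A$ into $B$; then $\gamma^{k_1}$ sends $B$ into $A$; and so on, alternating, until the final application of $\gamma^{k_m}$ lands in $A$. Hence $w(\gamma).B \subset A$. Since $B \ne \emptyset$, pick $p \in B$; then $w(\gamma).p \in A$ while $p \in B$, and $A \cap B = \emptyset$ forces $w(\gamma).p \ne p$, so $w(\gamma) \ne e$. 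Therefore $\gamma$ violates $w$, which is exactly the statement that $\gamma$ is freely independent from $F$.

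The one delicate point — and the only real obstacle — is the cyclic reduction step handling words that start or end with a coefficient, and in particular making sure the degenerate cases are covered: a word of the form $w = h$ with $h \in F \backslash \{e\}$ and no variable at all is nontrivial since $h \ne e$, so there is nothing to prove; a word like $h x^k h^{-1}$ reduces via conjugation to $x^k$, which is handled by the infinite-order hypothesis on $\gamma$; and one must be careful that the conjugating element used in the reduction is itself a group element (a product of a coefficient and possibly a power of $x$), so that $w(\gamma)$ and the reduced $v(\gamma)$ are genuinely conjugate in $\Gamma$ and thus simultaneously trivial or nontrivial. One also needs the coefficients of the cyclically reduced $v$ to still lie in $F \backslash \{e\}$: this is automatic because cyclic reduction only deletes or merges syllables at the two ends — merging two powers of $x$ into one (still a power of $x$, and nonzero after full reduction) or cancelling an $h h^{-1}$ pair — so no new coefficients are introduced. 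Once this bookkeeping is in place, the alternation argument above is entirely mechanical.
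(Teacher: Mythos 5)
Your overall strategy is the paper's: conjugate $w$ so that it begins and ends with a nonzero power of $x$, then run the alternation $B\to A\to B\to\cdots\to A$ and conclude $w(\gamma)\ne e$ from $A\cap B=\emptyset$. The alternation core is fine. The gap is in your reduction step, precisely in the case you flagged as delicate: when $w=h_{1}x^{k_{1}}h_{2}\cdots x^{k_{m}}h_{m+1}$ begins \emph{and} ends with coefficients. Cyclic reduction then moves $h_{1}$ to the other end and \emph{merges} the two boundary coefficients into the single syllable $h_{m+1}h_{1}$. Your claim that ``no new coefficients are introduced'' is false here: $h_{m+1}h_{1}$ is a genuinely new coefficient which need not lie in $F\backslash\{e\}$ (take $h_{1},h_{m+1}\in F$ with $h_{m+1}h_{1}\notin F$), and the hypothesis $h.A\subset B$ is only available for $h\in F\backslash\{e\}$, so the alternation breaks on that syllable. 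Moreover, if $h_{m+1}h_{1}\ne e$ the cyclically reduced word still ends with a coefficient, so no cyclic reduction of such a $w$ ever produces a $v$ beginning and ending with a power of $x$; the asserted decomposition $w=uvu^{-1}$ with $v$ of the required form simply does not exist in this case.

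The fix --- and what the paper does --- is to conjugate by a power of $x$ rather than cyclically reduce: replace $w$ by $x^{-1}wx$ when both ends are coefficients (and by $x^{\pm\mathrm{sgn}(k)}wx^{\mp\mathrm{sgn}(k)}$ when exactly one end is a power $x^{k}$, the sign chosen so the merged exponent stays nonzero). This \emph{pads} the word with powers of $x$ at the ends instead of merging coefficients, so every coefficient of the new word is an original coefficient and still lies in $F\backslash\{e\}$, and triviality of $w(\gamma)$ is unaffected by conjugation. With that substitution your argument goes through. (Separately, your display $w(\gamma)=\gamma^{k_{m}}h_{m}\cdots h_{1}\gamma^{k_{0}}$ reverses the order of the syllables of $w=x^{k_{0}}h_{1}\cdots h_{m}x^{k_{m}}$; this is only a notational slip, since the point --- the rightmost syllable acts first and is a nonzero power of $\gamma$ --- is correctly used.)
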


\begin{proof}
The proof is very standard, but due to the new terminology we give
the argument in full detail. 

Consider a non-trivial word $w\in\Gamma*\Z$ whose coefficients all
belong to $F\backslash\{e\}$. We must show that the corresponding
element of $w(\gamma)\in\Gamma$ is non-trivial. Write $w=g_{1}\cdot...\cdot g_{l}$,
as in (\ref{eq:word}). We can reduce to the case where $w$ starts
and ends with $x$, i.e when $g_{1},g_{l}\in\Z\backslash\{0\}$. Indeed,
for $g\in\Z\backslash\{0\}$ denote its sign by $\mathrm{sgn}(g)\in\{\pm1\}$,
and replace $w$ by some conjugate as follows:
\begin{itemize}
\item $g_{1}\in\Z\quad\&\quad g_{l}\in\Z:\qquad w\mapsto w$
\item $g_{1}\in\Z\quad\&\quad g_{l}\in\Gamma:\qquad w\mapsto x^{\mathrm{sgn}(g)}wx^{-\mathrm{sgn}(g)}$
\item $g_{1}\in\Gamma\quad\&\quad g_{l}\in\Z:\qquad w\mapsto x^{-\mathrm{sgn}(g)}wx^{\mathrm{sgn}(g)}$ 
\item $g_{1}\in\Gamma\quad\&\quad g_{l}\in\Gamma:\qquad w\mapsto x^{-1}wx$
\end{itemize}
In any case, this conjugation of $w$ does not affect whether $w(\gamma)$
is the identity or not. Thus, having replaced $w$ by this conjugation
we see $w(\gamma)$ starts and ends with a non-zero power of $\gamma$.

Now, we claim that $w(\gamma)\ne e$ by showing that $w(\gamma).B\subset A$.
This follows by induction: each application of $g_{i}$ alternates
between $B$ and $A$, by assumption. Since $g_{1}\in\Z\backslash\{0\}$,
we will indeed end up in $A$. 
\end{proof}

\section{Zariski dense subgroups are MIF\protect\label{sec:Non-effective-freeness}}

This section is devoted to proving Theorem \ref{Thm:effective=000020MIF}.
Some of the statements are subsumed by analogous quantitative statements
appearing later on. Since the repetition is mild, we do include the
details for the sake of completeness and as an exposition for the
following sections. 

Let $\mathbb{K}$ be a local field with absolute value $|\cdot|$.
Let $V$ be a $\mathbb{K}$-vector space of finite dimension $d\geq2$,
and let $P=P(V)$ denote the projective space. The topology on $\mathbb{K}$
induces a topology on the $\mathbb{K}$-algebraic variety $P$. This
is the only topology on $P$ that we consider. In the following sections
we will consider a natural metric on $P$, but for now, any metric
$d_{P}$ on $P$ which induces the topology on $P$ is appropriate.
Given $v\in V\backslash\{0\}$ we denote by $[v]=\mathbb{K}v$ the
corresponding point of $P$. 

For a general metric space $(X,d_{X})$, a set $X_{0}\subset X$,
and some $\epsilon>0$, we denote the $\epsilon$-tubular neighborhood
by 
\[
\left\{ X_{0}\right\} _{\epsilon}=\left\{ x\in X:\exists x_{0}\in X_{0}\text{ such that }d_{X}(x,x_{0})<\epsilon\right\} 
\]
Given two sets $X_{1},X_{2}$ in $X$, the notation $d_{X}(X_{1},X_{2})$
will always refer to the \textit{infimum} of the distance $d_{X}(x_{1},x_{2})$
among all points $x_{1}\in X_{1}$ and $x_{2}\in X_{2}$.

Let $\mathrm{SL}(V)$ denote the group of linear operators on $V$
with determinant $1$. If $V=\K^{d}$ we simply write $\mathrm{SL}_{d}(\K)=\mathrm{SL}(V)$.
This group acts on $V$, and thus on $P$, continuously. The kernel
of this action consists precisely of scalar operators in $\mathrm{SL}(V)$.
In the dynamics of $\mathrm{SL}(V)$ on $P$, there is essentially
one example (for our concerns) that requires good understanding. 
\begin{example}
\label{exa:exa-1}Let $g\in\mathrm{SL}_{d}(\K)$ be a diagonal matrix
of the form $g=\mathrm{diag}(\lambda_{1},...,\lambda_{d})$ with $|\lambda_{1}|\gneq|\lambda_{2}|\geq...\geq|\lambda_{d}|>0$.
Let $V_{\lambda}\leq\K^{d}$ denote the eigenspace associated to the
eigenvalue $\lambda$. Set
\[
p=V_{\lambda_{1}},\qquad W=\sum_{i=2}^{d}V_{\lambda_{i}},
\]
so that $p$ is a point in $P$, and $W$ is a hyperplane in $P$
not containing $p$. It is not hard to verify that $\lim_{k\to\infty}g^{k}.x=p$
for all $x\in P\backslash W$. Moreover, this convergence is uniform
on compact subsets \cite[Lemma 3.8]{tits1972free}, namely,
\begin{equation}
\max_{x\in C}d_{P}(g^{k}.x,p)\to0\qquad\text{for all}\quad C\subset P\backslash W\text{ compact}.\label{eq:proximality}
\end{equation}
\end{example}

\begin{defn}
An element $g\in\mathrm{SL}(V)$ is said to be \textit{proximal} if
there exists a point $p\in P$ and a hyperplane $W\subset P$ for
which (\ref{eq:proximality}) holds. $p$ is called the\textit{ attracting
point} of $g$, and $W$ is called the \textit{repelling hyperplane}
of $g$. 
\end{defn}

\begin{lem}
\label{lem:non-effective=000020existence=000020of=000020configuration}For
any countable set $F\subset\mathrm{SL}(V)$ consisting of non-scalar
elements, there exists an element $g\in\mathrm{SL}(V)$ such that
both $g$ and $g^{-1}$ are proximal, and moreover, 
\begin{equation}
\left[\bigcup_{h\in F}\left\{ h.p_{+},h.p_{-}\right\} \right]\cap\left(W_{+}\cup W_{-}\right)=\emptyset,\label{eq:non-effective-MIF=000020good=000020configuration}
\end{equation}
 where $p_{+}\in P(V)$ (and $p_{-}\in P(V)$) is the attracting point
of $g$ (resp. $g^{-1}$), and, $W_{+}\subset P(V)$ (and $W_{-}\subset P(V)$)
is the repelling hyperplane of $g$ (resp. $g^{-1}$).
\end{lem}

\begin{proof}
We will first choose the geometric configuration $W_{+},W_{-}$ and
$p_{+},p_{-}$, and then choose $g$ accordingly. The existence of
a configuration which satisfies (\ref{eq:non-effective-MIF=000020good=000020configuration})
is proven in Lemma \ref{lem:existence=000020of=000020points=000020and=000020subspace}
in a quantitive manner. Note that the assumptions there are stricter,
but they are used only to get quantitive estimates. The non-quantitive
version is significantly simpler, so we briefly spell out the details. 

Let $\left(p_{-},W_{+}\right)$ and $\left(p_{+},W_{-}\right)$ be
two flags of type $\left(1,d-1\right)$ in $V$ chosen randomly and
independently-- we refer to (\ref{eq:flag}) for the precise meaning
of this statement. Fix $h\in F$. Since $p_{+}$ and $W_{-}$ are
independent, $h.p_{+}\notin W_{+}$ almost surely. Similarly, $h.p_{-}\notin W_{-}$
almost surely. Moreover, since $h$ is non-scalar, its fixed point
set in $P$ is of measure zero. Therefore, even though $p_{-}\in W_{+}$,
almost surely we have that $h.p_{-}\notin W_{+}$. Similarly, $h.p_{+}\notin W_{-}$,
almost surely. 

We have thus considered countably many desired events, each of which
occurs almost surely. It follows that Equation (\ref{eq:non-effective-MIF=000020good=000020configuration})
occurs almost surely. Furthermore, the intersection of $W_{+}$ and
$W_{-}$ is almost surely a co-dimension $2$ subspace of $V$ which
we denote by $W_{0}$. Moreover, the points $p_{+}$ and $p_{-}$
do not belong to this subspace $W_{0}$, almost surely. 

Fix such $\left(p_{-},W_{+}\right)$ and $\left(p_{+},W_{-}\right)$
with the aforementioned generic properties. Let $g\in\mathrm{End}(V)$
be the unique diagonalizable linear operator with eigenvalues $2$,
$1$, and $\frac{1}{2}$, with corresponding eigenspaces $p_{+}$,
$W_{0}$ and $p_{-}$, respectively. Then clearly, $g\in\mathrm{SL}(V)$.
Moreover, both $g$ and $g^{-1}$ are proximal with corresponding
attracting points $p_{+}$ and $p_{-}$ and repelling hyperplanes
$W_{+}$ and $W_{-}$, respectively. The statement thus follows. 
\end{proof}
\begin{lem}
\label{lem:G=000020is=000020MIF}Let $G=\mathrm{SL}(V)$ . For any
finite set $F$ of non-scalar elements of $G$, there exists an element
$g\in G$ which is free from $F$. In particular, if $G$ is center-free
then it is MIF (considered as an abstract group). 
\end{lem}

\begin{proof}
Let $F\subset G$ be a finite set of non-scalar elements. Fix an element
$g\in G$ as provided in Lemma \ref{lem:non-effective=000020existence=000020of=000020configuration}.
Since the sets $W_{+}\cup W_{-}$ and $\bigcup_{h\in F}\left\{ h.p_{+},h.p_{-}\right\} $
are closed and disjoint, there exist $\epsilon_{1},\epsilon_{2}>0$
sufficiently small so that the tubular neighborhoods $\left\{ W_{+}\cup W_{-}\right\} _{\epsilon_{1}}$
and $\bigcup_{h\in F}\left\{ h.p_{+},h.p_{-}\right\} _{\epsilon_{2}}$
remain disjoint. Even more so, by making $\epsilon_{1}$ smaller if
necessary, we may assume that the following condition holds for all
points $x\in P$ and for all $h\in F$:
\begin{equation}
d_{P}(x,p_{\bullet})<\epsilon_{1}\Longrightarrow d_{P}(h.x,h.p_{\bullet})<\epsilon_{2},\qquad\left(\bullet\in\{+,-\}\right)\label{eq:implication}
\end{equation}
Indeed, this follows from $F$ being finite along with continuity
of the action of $G$ on $P$. 

Set
\[
A=\left\{ p_{+},p_{-}\right\} _{\epsilon_{1}}\qquad\text{and}\qquad B=\bigcup_{h\in F}\left\{ h.p_{+},h.p_{-}\right\} _{\epsilon_{2}}.
\]
Note that $A$ and $B$ are disjoint because $\{p_{+},p_{-}\}\subset W_{+}\cup W_{-}$. 

Now, $g$ and $g^{-1}$ are proximal, and thus, we there exists $k_{0}\in\N$
sufficiently large so that, for all $k$ with $|k|\geq k_{0}$,
\[
g^{k}.\left(P\backslash\left\{ W_{+}\cup W_{-}\right\} _{\epsilon_{1}}\right)\subset\left\{ p_{+},p_{-}\right\} _{\epsilon_{2}}=A.
\]
Fix such $k_{0}$, and set $\tilde{g}=g^{k_{0}}$. Since $B$ and
$\left\{ W_{+}\cup W_{-}\right\} _{\epsilon_{1}}$ are disjoint we
have on the one hand
\[
\tilde{g}^{k}.B\subset A,\qquad\forall k\in\Z\backslash\{0\}.
\]
On the other hand, given $h\in F$ and $x\in A$ we have that $d_{P}(x,p_{+})<\epsilon_{1}$
(or $d_{P}(x,p_{-})<\epsilon_{1}$ in which case the argument is the
same). Therefore $d_{P}(h.x,h.p_{+})<\epsilon_{2}$ by (\ref{eq:implication}),
which means that $h.x\in B$. This shows that 
\[
h.A\subset B,\qquad\forall h\in F.
\]
The ping-pong lemma (Lemma \ref{prop:pingpong}) thus applies, and
we get that $\tilde{g}$ is freely independent from $F$.
\end{proof}
\begin{thm}
\label{thm:non-effective=000020MIF=000020non-intro=000020version=000020}Let
$G=\mathrm{SL}(V)$, and let $\Gamma\leq G$ be a Zariski dense subgroup.
Then for any finite collection of words $\Omega\in G*\Z\backslash\{e\}$
with non-central coefficients, there exists $\gamma\in\Gamma$ such
that $w(\gamma)\ne0$ for all $w\in\Omega$. In particular, if $\Gamma$
is center-free, then it is MIF.
\end{thm}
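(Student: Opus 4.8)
The plan is to combine Lemma~\ref{lem:G=000020is=000020MIF} (which produces, for a finite non-central set $F \subset G$, an element $g \in G$ freely independent from $F$ via an explicit ping-pong configuration) with a Zariski-density argument that replaces $g$ by a lattice element $\gamma \in \Gamma$. First I would reduce to a single word: given a finite collection $\Omega$, it suffices to find $\gamma$ violating each $w \in \Omega$ simultaneously, and since the coefficients occurring across $\Omega$ form a finite non-central set $F \subset G \backslash \mathrm{Z}(G)$, I would apply Lemma~\ref{lem:G=000020is=000020MIF} to this $F$ to obtain $g \in G$ freely independent from $F$, hence violating every $w \in \Omega$ with its variable substituted by $g$.

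The core of the argument is then to show that the condition ``$x$ violates all $w \in \Omega$'' holds not just for $g$ but for a Zariski-open (hence, by density, $\Gamma$-meeting) subset of $G$. I would unwind the ping-pong data from the proof of Lemma~\ref{lem:G=000020is=000020MIF}: there is a representation $\rho : G \to \mathrm{SL}(V)$, attracting points $p_\pm$ and repelling hyperplanes $W_\pm$ attached to $g$ (really to a proximal power), and constants $\epsilon_1,\epsilon_2 > 0$ making the sets $A$ and $B$ disjoint. The key observation is that, for a fixed word $w$ of some syllable length $\ell$, whether $w(x) = e$ in $G$ is an algebraic condition on $x$; more usefully, the ping-pong mechanism shows $w(x) \neq e$ as soon as $x$ lies in a suitable Zariski-open neighbourhood — namely, one ensuring $\rho(x)$ and $\rho(x^{-1})$ are proximal with attracting/repelling loci close enough to $p_\pm, W_\pm$ that the disjointness $\left(\bigcup_{h \in F}\left\{\rho(h)p_+, \rho(h)p_-\right\}\right) \cap (W_+ \cup W_-) = \emptyset$ persists. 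Since $g$ satisfies all these open conditions, their finite intersection over $w \in \Omega$ (together with the proximality conditions, which are also Zariski-open among semisimple elements) is a nonempty Zariski-open subset $U \subseteq G$. By Zariski density of $\Gamma$, there is $\gamma \in \Gamma \cap U$, and running the ping-pong lemma (Lemma~\ref{prop:pingpong}) with $\gamma$ in place of $g$ gives $w(\gamma) \neq e$ for all $w \in \Omega$.

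The main obstacle I anticipate is making ``proximal with nearby attracting/repelling loci'' genuinely Zariski-open and checking that $g$ lies in it — proximality is not by itself a Zariski-open condition on all of $G$ (it fails on lower-dimensional strata), so I would restrict attention to the Zariski-open locus of elements whose $\rho$-image is regular semisimple (or has a simple dominant eigenvalue), where the attracting point and repelling hyperplane vary algebraically, and verify the chosen $g$ from Lemma~\ref{lem:G=000020is=000020MIF} can be taken in this locus (it can, since it was built from a semisimple element, and one may perturb within $\langle g \rangle$ or pass to a regular power as in \cite[Proposition 3.11]{tits1972free}). One then phrases the non-vanishing of $w$ as the condition that the configuration of points $\rho(g^{-1}hg)\tilde p_\circ$ avoids the hyperplanes $\tilde W_\bullet$, exactly as in the sets $\mathcal{S}_{\circ,\bullet}(h)$ of the previous proof, which are already known to be Zariski-open and dense. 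The remaining bookkeeping — tracking syllable lengths, confirming the ping-pong constants can be chosen uniformly over the finite set $\Omega$, and handling the reduction from arbitrary words to those starting and ending in the variable — is routine and already carried out in the proofs of Lemma~\ref{prop:pingpong} and Lemma~\ref{lem:G=000020is=000020MIF}.
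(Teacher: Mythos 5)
Your overall skeleton (reduce to the finite coefficient set $F$, invoke Lemma \ref{lem:G=000020is=000020MIF} for nonemptiness, then use Zariski density of $\Gamma$) matches the paper's, but the density step as you run it has a genuine gap. The sets you propose to intersect --- ``$\rho(x)$ and $\rho(x^{-1})$ are proximal with attracting/repelling loci close enough to $p_{\pm},W_{\pm}$ that the disjointness persists'' --- are defined by metric inequalities in $d_{P}$, so they are open in the analytic (Hausdorff) topology, \emph{not} in the Zariski topology. Zariski density of $\Gamma$ says nothing about analytically open sets: a lattice is analytically discrete, so $\Gamma\cap U$ can perfectly well be empty for a nonempty analytically open $U$. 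Restricting to the regular semisimple locus makes the attracting point an algebraic function of $x$, but ``within $\epsilon_{1}$ of $p_{+}$'' is still not a Zariski condition; and the purely algebraic condition you fall back on at the end (avoidance of the hyperplanes, as in the sets $\mathcal{S}_{\circ,\bullet}$) is not by itself sufficient to conclude $w(x)\ne e$, since the ping-pong in Lemma \ref{lem:G=000020is=000020MIF} also needs the contraction to dominate the Lipschitz constants of the elements of $F$, which is achieved only after passing to a power $x^{k_{0}}$ with $k_{0}$ depending on the element --- and a word $w$ has fixed exponents.

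The missing idea is much simpler and makes the whole dynamical unwinding unnecessary: for each $w\in\Omega$ the set $\mathrm{Null}(w)=\{x\in G: w(x)=1\}$ is Zariski closed outright, because $w(x)$ is a polynomial expression in the matrix entries of $x$ (inversion being a morphism of the algebraic group). Lemma \ref{lem:G=000020is=000020MIF} shows $\mathrm{Null}(w)\ne G$, and since $G$ is $\mathbb{K}$-connected this proper closed subset has strictly lower dimension, so the finite union $\bigcup_{w\in\Omega}\mathrm{Null}(w)$ is still a proper Zariski closed subset and cannot contain the Zariski dense $\Gamma$. That is the paper's argument; note that the element witnessing nonemptiness of the complement ($\tilde{g}=g^{k_{0}}$ from the ping-pong) need not itself lie in $\Gamma$, and no openness of the ping-pong configuration is ever required.
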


\begin{proof}
For $w\in G*\Z$ let 
\[
\mathrm{Null}(w)=\left\{ g\in G:w(g)=1\right\} 
\]
Then $\mathrm{Null}(w)$ is a Zariski closed subset of $G$. By Lemma
\ref{lem:G=000020is=000020MIF}, it is a proper subset assuming $w\ne e$,
of strictly lower dimension because $G$ is Zariski connected. Thus,
given a finite subset of $\Omega\subset G*\Z\backslash\{e\}$, the
set $\bigcup_{w\in\Omega}\mathrm{Null}(w)$ is a proper Zariski closed
subset of $G$. In particular, $\Gamma$ is not contained in it. This
means that there exists $\gamma\in\Gamma$ such that $w(\gamma)\ne e$
for all $w\in\Omega$, as required. 
\end{proof}

\section{Freeness criteria for linear groups\protect\label{sec:Freeness-criteria-for}}

Let $\mathbb{K}$ be a local field with absolute value $|\cdot|$.
Let $V$ be $\mathbb{K}$-vector space of finite dimension $d\geq2$,
and fix an arbitrary identification $V=\K^{d}$. Endow $V$ with a
norm, as follows:
\begin{enumerate}
\item If $\K$ is Archimedean, i.e., if $\K$ is isomorphic to $\R$ or
to $\C$, then endow $V$ with the standard inner product and the
corresponding norm. 
\item If $\K$ is non-Archimedean, then endow $V$ with the $\infty$-norm,
$\|v\|=\max_{i=1}^{d}\left|v_{i}\right|$ for all $v\in V$. 
\end{enumerate}
Let $P=P(V)$ be the projective space. The norm on $V$ induces a
norm on the exterior product $V\wedge V$, and as a result, a metric
on $P$: 
\[
d_{P}([v_{1}],[v_{2}])=\frac{\|v_{1}\wedge v_{2}\|}{\|v_{1}\|\cdot\|v_{2}\|},\qquad v_{1},v_{2}\in V\backslash\{0\}.
\]
For example, in the Archimedean case, $d_{P}([v_{1}],[v_{2}])=\left|\sin\angle(v_{1},v_{2})\right|$
where $\angle(v_{1},v_{2})$ is the angle between the lines $[v_{1}]$
and $[v_{2}]$. 

The group $G=\mathrm{SL}(V)$ naturally acts on $P$, and more generally,
on the Grassmannian $\mathrm{Gr}_{k}(V)$ consisting of all $k$-dimensional
subspaces of $V$. The subgroup of $G$ which preserves the norm on
$V$ is a maximal compact subgroup $K\leq G$, and it acts transitively
on $P$. For example, in the case $\K=\R$ then $K=\mathrm{SO}(V)$,
the group of rotations. Note that $K$ preserves the metric on $P$. 

Let $\Gamma\leq G$ be a lattice. The goal of this Section \ref{sec:Freeness-criteria-for}
is to construct a function $\psi_{r}:G\to\R_{\geq0}$ which measures
how effectively does an element $g\in G$ ``play ping-pong'' with
the ball of radius $r$ in $\Gamma$. 

\subsection{Length function on $G$ }

The group $G$ acts on the homogeneous space $G/K$ by left translations.
In the case $\K$ is Archimedean, $X$ admit a $G$-invariant Riemannian
metric which makes it a symmetric space. For example, if $G=\mathrm{SL}_{2}(\R)$
then $G/K\cong\mathbb{H}^{2}$, the hyperbolic plane. In the non-Archimedean
Case, $X$ admits a combinatorial structure called the Bruhat-Tits
building. In any case, $G/K$ is admits a $G$-invariant metric, and
for $g\in G$ we define
\[
|g|=d_{G/K}(gK,eK)
\]
This is a pseudo-length function in the sense that $|e|=0$, $|g^{-1}|=|g|$
and $|gh|\leq|g|+|h|$ for all $g,h\in G$. Moreover, $|k_{1}gk_{2}|=|g|$
for all $k_{1},k_{2}\in K$. This in turn defines a $G$-invariant
pseudo-metric by $d(g,h)=|h^{-1}g|$. When restricted to a cocompact
lattice $\Gamma$ in $G$, this metric is quasi-isometric to the word
metric arising from any choice of a finite generating set for $\Gamma$.
However, the metric that comes from $G/K$ is preferable because it
is defined on all of $G$. For $r>0$, we denote $B_{G}(r)=\left\{ g\in G:|g|\leq r\right\} $,
the ball of radius $r$. We denote $B_{\Gamma}(r)=B_{G}(r)\cap\Gamma$,
and $B_{\Gamma}^{\circ}(r)=B_{\Gamma}(r)\backslash\mathrm{Z}(\Gamma)$
(where $\mathrm{Z}(\Gamma)$ denotes the center of $\Gamma$). 

The length function $|\cdot|$ should not be confused with the operator
norm on $G\subset\mathrm{End}(V)$ which is induced from the norm
on $V$:
\[
\|g\|=\max_{v\in V\backslash\{0\}}\frac{\|gv\|}{\|v\|}.
\]
The two are connected by\label{eq:equivalent-lengths}
\begin{equation}
\log\|g\|\leq|g|\leq\sqrt{d}\log\|g\|,
\end{equation}
see \cite[Lemma 4.5]{breuillard2008uniform}. 

Importantly, $G$ does not preserve the metric on $P$-- see example
\ref{exa:exa-1}. Nevertheless, each $g\in G$ is Lipschitz, and we
denote by $\mathrm{Lip}(g)$ its Lipschitz constant. In fact, the
following bound holds.
\begin{lem}
\label{lem:Lip}For any $g\in G$: 
\[
\mathrm{Lip}(g)\leq e^{4|g|}.
\]
 
\end{lem}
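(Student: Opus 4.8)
The goal is to bound the Lipschitz constant of the action of $g \in G = \mathrm{SL}(V)$ on $(P, d_P)$ by $e^{4|g|}$. Since $d_P([v_1],[v_2]) = \|v_1 \wedge v_2\| / (\|v_1\| \|v_2\|)$, the plan is to compute directly how $g$ distorts this quantity. Writing $w_i = g v_i$, we have
\[
\frac{d_P([gv_1],[gv_2])}{d_P([v_1],[v_2])} = \frac{\|gv_1 \wedge gv_2\|}{\|v_1 \wedge v_2\|} \cdot \frac{\|v_1\| \|v_2\|}{\|gv_1\| \|gv_2\|}.
\]
The first factor is at most $\|\wedge^2 g\| = \|g \wedge g\|$, the operator norm of the induced map on $V \wedge V$. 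The second factor is at most $\|g^{-1}\|^2$, since $\|v_i\| = \|g^{-1}(gv_i)\| \leq \|g^{-1}\| \|gv_i\|$. Hence $\mathrm{Lip}(g) \leq \|\wedge^2 g\| \cdot \|g^{-1}\|^2$.

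Next I would bound each piece in terms of the operator norm $\|g\|$. We have $\|\wedge^2 g\| \leq \|g\|^2$ (a bilinear estimate, or: the singular values of $\wedge^2 g$ are products of pairs of singular values of $g$, in the Archimedean case; in the non-Archimedean case one argues directly with the $\infty$-norm and the ultrametric inequality that $\|gv_1 \wedge gv_2\| \leq \|g\|^2 \|v_1 \wedge v_2\|$). For $\|g^{-1}\|$, note that $g^{-1}$ is also in $\mathrm{SL}(V)$, and since all singular values of $g$ multiply to $1$ (determinant $\pm 1$, up to the relevant normalization), the smallest singular value of $g$ is at least $\|g\|^{-(d-1)}$, giving $\|g^{-1}\| \leq \|g\|^{d-1}$. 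Combining, $\mathrm{Lip}(g) \leq \|g\|^2 \cdot \|g\|^{2(d-1)} = \|g\|^{2d}$. Then invoking the inequality $\log\|g\| \leq |g|$ from \eqref{eq:equivalent-lengths}, we get $\mathrm{Lip}(g) \leq e^{2d|g|}$.

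This gives $e^{2d|g|}$ rather than $e^{4|g|}$, so the final step is to sharpen the argument. The waste is in bounding $\|g^{-1}\|$ by $\|g\|^{d-1}$ and in not using that the Lipschitz ratio involves a cancellation. A cleaner route: work with singular values $s_1(g) \geq \cdots \geq s_d(g)$ directly. The Lipschitz constant of $g$ on $P$ equals $\sup$ over the relevant ratios, and one shows it is exactly $s_1(g)/s_d(g)$ (the ratio of largest to smallest singular value), which is the "conformal distortion" of $g$ on projective space — this is classical, e.g. the distortion of a hyperplane-to-point contraction. Then $s_1(g)/s_d(g) \leq s_1(g)^2 / (s_1 \cdots s_d) \cdot (\text{something})$... more directly, $s_1/s_d \leq s_1 \cdot (1/s_d)$ and $1/s_d = s_1(g^{-1}) = \|g^{-1}\|$, with $\|g\| = s_1(g)$, so $\mathrm{Lip}(g) \leq \|g\| \cdot \|g^{-1}\|$. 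Now using \eqref{eq:equivalent-lengths} in the form $\log\|g\| \leq |g|$ and $\log\|g^{-1}\| \leq |g^{-1}| = |g|$, we obtain $\mathrm{Lip}(g) \leq e^{|g|} e^{|g|} = e^{2|g|} \leq e^{4|g|}$. The generous constant $4$ in the statement presumably absorbs any discrepancy between the $P$-metric normalization and the singular-value computation, or the non-Archimedean subtleties.

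\textbf{Main obstacle.} The crux is establishing that the Lipschitz constant of $g$ acting on $(P, d_P)$ is governed by $\|g\| \cdot \|g^{-1}\|$ (equivalently $s_1(g)/s_d(g)$) rather than a larger power — i.e., extracting the cancellation in the ratio $\|gv_1 \wedge gv_2\| \|v_1\| \|v_2\| / (\|v_1 \wedge v_2\| \|gv_1\| \|gv_2\|)$. In the Archimedean case this follows from the singular value decomposition and an explicit extremal computation (the worst case being $v_1, v_2$ spanning the plane of the two extreme singular directions); in the non-Archimedean case one must instead argue with the ultrametric norm on $V \wedge V$ and the fact that $\|g^{-1}\|$ controls how much $g$ can shrink norms, being careful that the $\infty$-norm is not induced by an inner product so "singular values" must be replaced by the Cartan/Iwasawa decomposition for $\mathrm{SL}_d$ over $\K$. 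Managing both cases uniformly, while keeping the constant down to what \eqref{eq:equivalent-lengths} allows, is the only real work; everything else is bookkeeping.
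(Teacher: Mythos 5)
Your first computation is exactly the paper's proof: the paper simply cites \cite[\S 2.1]{breuillard2008uniform} for the bound $\mathrm{Lip}(g)\leq\|g\|^{2}\|g^{-1}\|^{2}$ and then applies (\ref{eq:equivalent-lengths}). The detour in your second paragraph is a self-inflicted problem: there is no need to bound $\|g^{-1}\|$ by a power of $\|g\|$, since (\ref{eq:equivalent-lengths}) applies to $g^{-1}$ as well, giving $\log\|g^{-1}\|\leq|g^{-1}|=|g|$ and hence $\|g\|^{2}\|g^{-1}\|^{2}\leq e^{4|g|}$ directly. Your sharpened claim $\mathrm{Lip}(g)\leq\|g\|\,\|g^{-1}\|$ is in fact correct (write $w=\alpha v+\beta u$ with $u$ a unit vector and $|\beta|=\|v\wedge w\|$ to get $\|gv\wedge gw\|\leq\|g\|\,\|gv\|\,\|v\wedge w\|$ for unit $v,w$), but it is not needed for the stated exponent $4$.
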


\begin{proof}
It is not hard to see that $\mathrm{Lip}(g)\leq\|g\|^{2}\|g^{-1}\|^{2}$
(see \cite[§2.1]{breuillard2008uniform}) and so by (\ref{eq:equivalent-lengths})
we get that $\mathrm{Lip}(g)\leq e^{4|g|}$. 
\end{proof}

\subsection{Dynamical criterion for $r$-freeness}

The following is a more restricted version of the notion of proximality
given in Section \ref{sec:Non-effective-freeness}.
\begin{defn}
\label{def:contractive}An element $g\in G$ is said to be \textit{very
proximal} if it is conjugate over the algebraic closure $\overline{\K}$
to a diagonal matrix of the form $\mathrm{diag}(\lambda_{1},...,\lambda_{d})$
with 
\[
|\lambda_{1}|\gneq|\lambda_{2}|\geq...\geq\left|\lambda_{d-1}\right|\gneq\left|\lambda_{d}\right|>0.
\]
 We denote by $G_{\mathrm{vp}}$ the set of all very proximal elements
of $G$. 

Clearly, if $g\in G_{\mathrm{vp}}$ then $g^{k}\in G_{\mathrm{vp}}$
for all $k\in\Z\backslash\{0\}$. Using the notation in Definition
\ref{def:contractive}, we define 
\[
\mathrm{Att:\quad}G_{\mathrm{vp}}\to P,\qquad\mathrm{Att}(g)=V_{\lambda_{1}}
\]
\[
\mathrm{Rep}\quad:G_{\mathrm{vp}}\to\mathrm{Gr}_{d-1}(V),\qquad\mathrm{Rep}(g)=\sum_{i=2}^{d}V_{\lambda_{i}}
\]
where $V_{\lambda_{i}}$ is the eigenspace corresponding to the eigenvalue
$\lambda_{i}$. Recall that $\mathrm{Gr}_{d-1}(V)$ denotes the space
of all co-dimension $1$ subspaces of $V$. We define the sets 
\[
\mathrm{Att}^{\pm}(g):=\left\{ \mathrm{Att}(g),\mathrm{\mathrm{Att}}(g^{-1})\right\} ,\quad\text{and}\quad\mathrm{Rep}^{\pm}(g):=\mathrm{Rep}(g)\cup\mathrm{Rep}(g^{-1})
\]
called the \textit{attracting locus} and \textit{repelling locus}
of $g$, respectively. Observe that
\end{defn}

\begin{enumerate}
\item $\mathrm{Att}(g^{k})=\mathrm{Att}(g)$ and $\mathrm{Rep}(g^{k})=\mathrm{Rep}(g)$
for all $k\in\N$. 
\item $\mathrm{Att}(hgh^{-1})=h.\mathrm{Att}(g)$ and $\mathrm{Rep}(hgh^{-1})=h.\mathrm{Rep}(g)$
for all $h\in G$.
\item $\mathrm{Att}(g)\in\mathrm{Rep}(g^{-1})$. 
\end{enumerate}
Given $g\in G_{\mathrm{vp}}$ and $r>0$, the following parameters
will play a central role in determining whether $g$ is $r$-free. 
\begin{itemize}
\item The \textit{contraction} parameter
\[
C_{g}=\max\left\{ \left|\lambda_{1}\right|/\left|\lambda_{2}\right|,\left|\lambda_{d-1}\right|/\left|\lambda_{d}\right|\right\} >1.
\]
Note that $C_{g^{k}}=C_{g}^{k}$ for any $k\in\N$, and that $C_{hgh^{-1}}=C_{g}$
for all $h\in G$.
\item The \textit{Lipschitz} parameter, which depends only $r$, is 
\[
L_{r}=\max_{h\in B_{G}(r)}\mathrm{Lip}(h).
\]
Note that $L_{r}\leq e^{4r}$ by Lemma \ref{lem:Lip}. 
\item The \textit{geometric} parameter
\[
D_{g,r}=\min_{h\in B_{\Gamma}^{\circ}(r)}d_{P}\left(h.\mathrm{Att^{\pm}}(g),\mathrm{Rep}^{\pm}(g)\right).
\]
 Note that $D_{g^{k},r}=D_{g,r}$ for all $k\in\Z\backslash\{0\}$.
This parameter depends on the lattice $\Gamma\leq G$.
\end{itemize}
The three parameters are related to $r$-freeness, as follows.
\begin{lem}
\label{lem:dynamical-criterion}If $g\in G_{\mathrm{vp}}$ satisfies
$D_{g,r}\geq(1+L_{r})C_{g}^{-1/2}$ then $g$ is $r$-free. 
\end{lem}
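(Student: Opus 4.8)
The plan is to apply the Ping-Pong Lemma (Lemma \ref{prop:pingpong}) with the cyclic group generated by $\gamma = g$ as one player and the set $F = B_\Gamma^\circ(r)$ of non-central elements of length $\le r$ as the other. To do this I must produce two disjoint non-empty subsets $A, B \subset P$ with $g^k.B \subset A$ for all $k \in \Z\setminus\{0\}$ and $h.A \subset B$ for all $h \in F$. Note that a word $w \in \Gamma * \Z$ whose coefficients lie in $F$ is exactly a word with coefficients in $B_\Gamma^\circ(r)$; since central coefficients act trivially on $P$ one does not lose anything by excluding them, and indeed one checks that $g$ being freely independent from $B_\Gamma^\circ(r)$ forces $g$ to violate every non-trivial word with \emph{arbitrary} coefficients of length $\le r$ (a central coefficient equal to $e$ is already excluded; a non-trivial central coefficient makes $w(g) \ne e$ for free-group reasons after collapsing it). So it suffices to run ping-pong against $F = B_\Gamma^\circ(r)$.

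The natural choice, guided by Example \ref{exa:exa-1} and the proof of Lemma \ref{lem:G=000020is=000020MIF}, is to take $A$ to be a tubular neighborhood of the attracting locus $\mathrm{Att}^\pm(g) = \{\mathrm{Att}(g), \mathrm{Att}(g^{-1})\}$ and $B$ to be the image $\bigcup_{h \in F} h.A$, with a carefully chosen radius. Concretely, set $\delta = C_g^{-1/2}$, let $A = \{\mathrm{Att}^\pm(g)\}_\delta$, and let $B = \{h.\mathrm{Att}^\pm(g) : h \in F\}_{L_r \delta}$; since each $h \in F$ is $L_r$-Lipschitz, $h.A \subseteq B$ automatically, giving the second ping-pong condition. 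For disjointness of $A$ and $B$: a point in $A$ is within $\delta$ of $\mathrm{Att}^\pm(g)$, a point in $B$ is within $L_r\delta$ of some $h.\mathrm{Att}^\pm(g)$; since $d_P(h.\mathrm{Att}^\pm(g), \mathrm{Att}^\pm(g)) \ge d_P(h.\mathrm{Att}^\pm(g), \mathrm{Rep}^\pm(g)) \ge D_{g,r}$ (using observation (3), that $\mathrm{Att}(g) \in \mathrm{Rep}(g^{-1})$ and $\mathrm{Att}(g^{-1}) \in \mathrm{Rep}(g)$, so $\mathrm{Att}^\pm(g) \subseteq \mathrm{Rep}^\pm(g)$), the triangle inequality forces $A \cap B = \emptyset$ provided $\delta + L_r\delta \le D_{g,r}$, i.e. $(1+L_r)C_g^{-1/2} \le D_{g,r}$, which is exactly the hypothesis.

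The remaining and most delicate point is the first ping-pong condition: $g^k.B \subset A$ for all $k \ne 0$. Here I need an \emph{effective} version of the proximality statement in Example \ref{exa:exa-1}: I must show that $g^k$ maps the complement of the $(L_r\delta)$-neighborhood of $\mathrm{Rep}^\pm(g)$ into the $\delta$-neighborhood of $\mathrm{Att}^\pm(g)$, and then check that $B$ lies in that complement. For the latter, $B$ is contained in $\{h.\mathrm{Att}^\pm(g)\}_{L_r\delta}$ and $d_P(h.\mathrm{Att}^\pm(g), \mathrm{Rep}^\pm(g)) \ge D_{g,r} \ge (1+L_r)\delta > L_r\delta$, so $B$ avoids $\{\mathrm{Rep}^\pm(g)\}_{L_r\delta}$. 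For the quantitative contraction estimate, I expect this to be the main obstacle: working with a diagonalization $g \sim \mathrm{diag}(\lambda_1,\dots,\lambda_d)$ (over $\overline\K$, but the eigenvalue moduli and the attracting/repelling subspaces are defined over $\K$ by the very-proximality hypothesis), one estimates $d_P(g^k.x, \mathrm{Att}(g))$ for $x$ at distance $\ge t$ from $\mathrm{Rep}(g)$ by decomposing a representative $v$ of $x$ along the top eigenline and its complement; the component along $\mathrm{Rep}(g)$ has norm controlled by $t^{-1}$ times $\|v\|$-ish, gets multiplied by at most $|\lambda_2|^k$, while the top component gets multiplied by $|\lambda_1|^k$, so the ratio that measures $d_P(g^k.x,\mathrm{Att}(g))$ decays like $C_g^{-k} t^{-1}$ — and the same for $g^{-1}$ with its own attracting/repelling data. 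With $t = L_r\delta = L_r C_g^{-1/2}$ and $k \ge 1$ one gets $d_P(g^k.x, \mathrm{Att}(g)) \lesssim C_g^{-k} \cdot C_g^{1/2}/L_r \le C_g^{-1/2} = \delta$, which lands in $A$. (One also handles the two ``ends'' — for $k>0$ one contracts toward $\mathrm{Att}(g)$, for $k<0$ toward $\mathrm{Att}(g^{-1})$ — so in all cases $g^k.B \subseteq A$.) Assembling the two inclusions and invoking Lemma \ref{prop:pingpong} yields that $g$ is freely independent from $B_\Gamma^\circ(r)$, hence $r$-free.
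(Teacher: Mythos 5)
Your proposal is correct and follows essentially the same route as the paper's proof: the same sets $A=\left\{ \mathrm{Att}^{\pm}(g)\right\}_{\epsilon}$ and $B=\left\{ \bigcup_{h}h.\mathrm{Att}^{\pm}(g)\right\}_{L_{r}\epsilon}$ with $\epsilon=C_{g}^{-1/2}$, the same triangle-inequality arguments (using $\mathrm{Att}^{\pm}(g)\subset\mathrm{Rep}^{\pm}(g)$) for disjointness and for keeping $B$ away from the repelling locus, and then the ping-pong lemma applied against $B_{\Gamma}^{\circ}(r)$. The only differences are cosmetic: the paper simply cites \cite[Proposition 3.3]{breuillard2003dense} for the effective contraction $g^{k}.\left(P\backslash\left\{ \mathrm{Rep}^{\pm}(g)\right\}_{\epsilon}\right)\subset\left\{ \mathrm{Att}^{\pm}(g)\right\}_{\epsilon}$ for all $k\ne0$, which you sketch by hand, and note that your hypothesis in fact only gives that $B$ avoids the $\epsilon$-neighbourhood of $\mathrm{Rep}^{\pm}(g)$ (not the $L_{r}\epsilon$-neighbourhood), which is precisely what that contraction statement requires, so your numerology still closes.
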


\begin{proof}
By \cite[Proposition 3.3]{breuillard2003dense} 
\[
g^{k}.\left\{ P\backslash\mathrm{Rep}^{\pm}(g)\right\} _{\epsilon_{g}}\subset\left\{ \mathrm{Att}^{\pm}(g)\right\} _{\epsilon_{g}}
\]
where $\epsilon_{g}=C_{g}^{-1/2}$, and for all $k\in\Z\backslash\{0\}$. 

Set $A=\left\{ \mathrm{Att}^{\pm}(g)\right\} _{\epsilon_{g}}$ and
$B=\left\{ \bigcup_{h\in B_{\Gamma}^{\circ}(r)}h.\mathrm{Att}^{\pm}(g)\right\} _{L_{r}\epsilon_{g}}$.
By assumption, $\mathrm{Rep}^{\pm}(g)$ and $\bigcup_{h\in B_{\Gamma}^{\circ}(r)}h.\mathrm{Att}^{\pm}(g)$
are $(1+L_{r})\epsilon_{g}$-apart. Since $\mathrm{Att}^{\pm}(g)\subset\mathrm{Rep}^{\pm}(g)$
we see that $A$ and $B$ are disjoint. 

On the one hand, given $x\in B$, we have by the triangle inequality
that
\begin{align*}
d_{P}(x,\mathrm{Rep}^{\pm}(g) & )\geq d_{P}\left(\bigcup_{h\in B_{\Gamma}^{\circ}(r)}h.\mathrm{Att}^{\pm}(g),\mathrm{Rep}^{\pm}(g)\right)-d_{P}\left(x,\bigcup_{h\in B_{\Gamma}^{\circ}(r)}h.\mathrm{Att}^{\pm}(g)\right)\\
 & \geq D_{g,r}-L_{r}\epsilon_{g}\geq(1+L_{r})\epsilon_{g}-L_{r}\epsilon_{g}=\epsilon_{g}
\end{align*}
Since $g$ is $\epsilon_{g}$-contracting we get that $g^{k}x\in A$
for all $k\ne0$. 

On the other hand, given $x\in A$ we have that $d_{P}(x,\mathrm{Att}(g))<\epsilon_{g}$
(or $d_{P}(x,\mathrm{Att}(g^{-1}))<\epsilon_{g}$ in which case the
argument is the same). Therefore for any $h\in B_{\Gamma}^{\circ}(r)$
\[
d_{P}(hx,h\mathrm{Att}(g))<L_{r}\cdot\epsilon_{g},
\]
which means that $hx\in B$. The ping-pong lemma (Lemma \ref{prop:pingpong})
thus applies.
\end{proof}

\subsection{Geometric criterion}

Among the three parameters appearing in Lemma \ref{lem:dynamical-criterion},
it is the geometric parameter $D_{g,r}$ that is the most difficult
to control. We shall now formulate a criterion expressed only in terms
of this parameter. Certainly, $G$ admits very proximal and regular
(i.e., diagonalizable with distinct eigenvalues) elements. Using Poincare
recurrence, it is not hard to see that $\Gamma$ also admits such
elements. 
\begin{defn}
\label{def:geometric=000020function}Fix a very proximal element $a_{0}\in\Gamma$.
For any $r>0$, define $\psi_{r}^{a_{0}}=\psi_{r}:G\to\R$ by 
\[
\psi_{r}(g)=D_{ga_{0}g^{-1},r}=\min_{h\in B_{\Gamma}^{\circ}(r)}d_{P}(hg\mathrm{Att}^{\pm}(a_{0}),g\mathrm{Rep}^{\pm}(a_{0})).
\]
We call $\psi_{r}$ the \textit{geometric function}.
\end{defn}

Our goal can be expressed purely in terms of the function $\psi_{r}$. 
\begin{lem}
\label{lem:restate=000020goal}Assume that there exists $c,\kappa>0$,
such that for any $r\in\N$ there exists $\gamma\in\Gamma$ satisfying
\begin{equation}
|\gamma|<cr,\qquad\text{and}\qquad\psi_{r}(\gamma)>e^{-\kappa r}.\label{eq:gamma=000020requirement}
\end{equation}
Then, for any $r\in\N$ there exists an $r$-free element $\tilde{\gamma}\in\Gamma$
with $|\tilde{\gamma}|\leq c'r$, where $c'$ does not depend on $r$. 
\end{lem}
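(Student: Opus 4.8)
The plan is to combine the hypothesis with the dynamical criterion of Lemma~\ref{lem:dynamical-criterion}, the only subtlety being the bookkeeping between the three parameters $C_g$, $L_r$, and $D_{g,r}$ when $g = \gamma a_0 \gamma^{-1}$ for the $\gamma$ supplied by the hypothesis. First I would record the scaling behaviour under powers: if $g$ is very proximal then $C_{g^n} = C_g^n$, $D_{g^n,r} = D_{g,r}$, and, since $g^n$ is again very proximal with the same attracting/repelling loci, the criterion $D_{g^n,r} \ge (1+L_r)C_{g^n}^{-1/2}$ reads $D_{g,r} \ge (1+L_r)C_g^{-n/2}$. The right-hand side tends to $0$ as $n\to\infty$, so for \emph{any} very proximal $g$ with $D_{g,r}>0$ there is a power $g^n$ that is $r$-free; the entire content of the lemma is to control how large $n$ and hence $|g^n| = n|g|$ must be taken.

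Next I would apply the hypothesis: fix $r$, take $\gamma$ with $|\gamma| < cr$ and $\psi_r(\gamma) = D_{\gamma a_0 \gamma^{-1},r} > e^{-\kappa r}$, and set $g = \gamma a_0 \gamma^{-1}$. Since conjugation by $\gamma$ does not change the contraction parameter, $C_g = C_{a_0} =: c_0 > 1$ is a fixed constant depending only on $a_0$ (hence only on $\Gamma$, $S$). We also have $L_r \le e^{4r}$ by Lemma~\ref{lem:Lip}. So it suffices to choose $n = n(r)$ with
\[
e^{-\kappa r} < D_{g,r} \quad\text{forced to dominate}\quad (1+L_r)c_0^{-n/2} \le (1 + e^{4r})c_0^{-n/2} \le 2e^{4r}c_0^{-n/2};
\]
that is, it suffices that $2e^{4r}c_0^{-n/2} \le e^{-\kappa r}$, i.e. $n \ge \frac{2}{\log c_0}\bigl((4+\kappa)r + \log 2\bigr)$. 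Taking $n = \lceil \frac{2}{\log c_0}((4+\kappa)r + \log 2)\rceil$, which is $\le c'' r$ for a constant $c''$ depending only on $c_0$, $\kappa$ (and not on $r$, for $r \ge 1$), Lemma~\ref{lem:dynamical-criterion} applies to $g^n$ and yields that $\tilde\gamma := g^n = (\gamma a_0 \gamma^{-1})^n = \gamma a_0^n \gamma^{-1}$ is $r$-free.

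Finally I would bound the length of $\tilde\gamma$. Using the triangle inequality and $K$-biinvariance of $|\cdot|$, $|\tilde\gamma| = |\gamma a_0^n \gamma^{-1}| \le 2|\gamma| + n|a_0| \le 2cr + c'' r |a_0|$, which is at most $c' r$ with $c' := 2c + c''|a_0|$ depending only on $c$, $\kappa$, and $a_0$ — in particular not on $r$. This completes the proof. The only place that requires care is the elementary but essential observation that $C_g$ stays bounded \emph{away from $1$ uniformly in $r$}: it is pinned to the fixed element $a_0$ by conjugation-invariance, so that the logarithmic loss in passing from $D_{g,r} > e^{-\kappa r}$ to the needed power $n \asymp r$ is only linear in $r$ — exactly what is needed to keep $|\tilde\gamma|$ linear. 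I do not expect a genuine obstacle here; the real work is in establishing the hypothesis~\eqref{eq:gamma=000020requirement}, which is deferred to the later sections.
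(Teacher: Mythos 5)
Your proposal is correct and follows essentially the same route as the paper: conjugate the fixed very proximal element $a_0$ by the $\gamma$ from the hypothesis, use the invariance $C_{\gamma a_0\gamma^{-1}}=C_{a_0}$ and $D_{g^n,r}=D_{g,r}$ to pick a power $n\asymp r$ making the criterion of Lemma~\ref{lem:dynamical-criterion} hold, and bound $|\gamma a_0^n\gamma^{-1}|$ by the triangle inequality. The paper's proof differs only in the explicit choice of exponent ($n=2qr$ with $q=\lceil(5+\kappa)/\log C_{a_0}\rceil$ versus your ceiling expression), which is immaterial.
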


\begin{proof}
Fix $r\in\N$, and let $\gamma\in\Gamma$ such that \ref{eq:gamma=000020requirement}
holds. Denote $l=\log C_{a_{0}}$ and set $q=\left\lceil \frac{5+\kappa}{l}\right\rceil \in\N$.
Consider the element 
\[
\tilde{\gamma}=\gamma a_{0}^{2qr}\gamma^{-1}\in\Gamma.
\]
Then we have 
\[
C_{\tilde{\gamma}}^{-1/2}=\left(C_{a_{0}}^{2qr}\right)^{-1/2}=e^{-lqr},\qquad D_{\tilde{\gamma},r}=D_{\gamma a_{0}\gamma^{-1},r}>e^{-\kappa r},\qquad L_{r}\leq e^{4r}.
\]
Therefore
\[
C_{\tilde{\gamma}}^{-1/2}\left(1+L_{r}\right)\leq e^{5r-lqr}\leq e^{-\kappa r}<D_{\tilde{\gamma},r}.
\]
It follows from Lemma \ref{lem:dynamical-criterion} that $\tilde{\gamma}$
is $r$-free. Additionally
\[
|\tilde{\gamma}|\leq|\gamma|+|\gamma^{-1}|+2qr|a_{0}|\leq(2c+2q|a_{0}|)r.
\]
This demonstrates for any $r>0$ an $r$-free element $\tilde{\gamma}$
with $|\tilde{\gamma}|\leq c'r$ where $c':=2c+2q|a_{0}|$ does not
depend on $r$. 
\end{proof}
We will need two more properties of the function $\psi_{r}$, both
are crucial for our analysis.  The first property follows immediately
by definition. 
\begin{lem}
\label{lem:A-invariance}For any $r$, the function $\psi_{r}$ is
right-invariant under the centralizer $A=C_{G}(a_{0})\leq G$, namely
\[
\psi_{r}(ga)=\psi_{r}(g),\qquad\text{for all }g\in G,\quad a\in A.
\]
\end{lem}

The second property says that $\psi_{r}$ is Lipschitz continuous
in the following sense:
\begin{lem}
\label{lem:holder=000020continuitiy}For any $g,s\in G$ with $\|s-1\|<\frac{1}{2}$
we have
\[
|\psi_{r}(sg)-\psi_{r}(g)|\leq4e^{4r}\|s-1\|.
\]
\end{lem}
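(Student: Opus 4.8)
The plan is to show that $\psi_r$ is Lipschitz by tracking how each of the two set-valued quantities appearing in its definition move when $g$ is replaced by $sg$, and then using that the minimum-over-$h$ and the distance function $d_P$ are both $1$-Lipschitz in the appropriate sense. Recall
\[
\psi_r(g)=\min_{h\in B_\Gamma^\circ(r)} d_P\bigl(hg\,\mathrm{Att}^\pm(a_0),\; g\,\mathrm{Rep}^\pm(a_0)\bigr).
\]
Fix $g,s\in G$ with $\|s-1\|<\tfrac12$, and fix any $h\in B_\Gamma^\circ(r)$. The key observation is that passing from $g$ to $sg$ replaces the point-set $hg\,\mathrm{Att}^\pm(a_0)$ by $shg\,\mathrm{Att}^\pm(a_0)$ — that is, applies $s$ followed by... wait, more carefully: $h(sg)\,\mathrm{Att}^\pm(a_0)=(hs)g\,\mathrm{Att}^\pm(a_0)$, and $(sg)\,\mathrm{Rep}^\pm(a_0)=s\bigl(g\,\mathrm{Rep}^\pm(a_0)\bigr)$. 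So I should instead move everything by $s$ on the outside: writing the quantity inside the minimum for $sg$ as a distance between $hsg\,\mathrm{Att}^\pm(a_0)$ and $sg\,\mathrm{Rep}^\pm(a_0)$ is not symmetric in $s$. The cleaner route is to conjugate: note $h(sg)=\bigl(hsh^{-1}\bigr)hg$ is awkward since $hsh^{-1}$ need not be small. The honest approach is to bound, term by term, $d_P\bigl((hs)g\,\mathrm{Att}^\pm(a_0),\,sg\,\mathrm{Rep}^\pm(a_0)\bigr)$ against $d_P\bigl(hg\,\mathrm{Att}^\pm(a_0),\,g\,\mathrm{Rep}^\pm(a_0)\bigr)$.

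So the main steps are as follows. First, for any $h\in B_\Gamma^\circ(r)$ and any points $p\in\mathrm{Att}^\pm(a_0)$, $q\in\mathrm{Rep}^\pm(a_0)$, I estimate
\[
\bigl| d_P(hsg\,p,\,sg\,q) - d_P(hg\,p,\,g\,q)\bigr|
\le d_P(hsg\,p,\,hg\,p) + d_P(sg\,q,\,g\,q),
\]
using the triangle inequality for $d_P$ twice (this is the standard fact that $|d(x',y')-d(x,y)|\le d(x',x)+d(y',y)$). Second, I bound each of the two displacement terms: $d_P(sg\,q,\,g\,q)=d_P(s\cdot(g q),\,gq)$, and since $gq$ is a single point of $P$ I need a bound of the form $d_P(s\cdot z,z)\le 2\|s-1\|$ for $\|s-1\|<\tfrac12$ and all $z\in P$ — this follows directly from the formula $d_P([v_1],[v_2])=\|v_1\wedge v_2\|/(\|v_1\|\|v_2\|)$ applied to $v_1=sv$, $v_2=v$, since $\|sv\wedge v\|=\|(s-1)v\wedge v\|\le\|(s-1)v\|\,\|v\|\le\|s-1\|\,\|v\|^2$ while $\|sv\|\ge(1-\|s-1\|)\|v\|\ge\tfrac12\|v\|$. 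For the term $d_P(hsg\,p,\,hg\,p)=d_P\bigl(h\cdot(sgp),\,h\cdot(gp)\bigr)$ I use that $h$ is Lipschitz on $P$: $d_P(h\cdot z,h\cdot z')\le\mathrm{Lip}(h)\,d_P(z,z')\le L_r\, d_P(sgp,gp)\le L_r\cdot 2\|s-1\|$, where $L_r\le e^{4r}$ by Lemma~\ref{lem:Lip} since $h\in B_G(r)$. Third, combining, each term inside the $\min$ defining $\psi_r$ changes by at most $2\|s-1\|+2L_r\|s-1\|=2(1+L_r)\|s-1\|\le 4L_r\|s-1\|\le 4e^{4r}\|s-1\|$; since this bound is uniform in $h$ (and in the choice of $p,q$), and since $|\min_h a_h-\min_h b_h|\le\max_h|a_h-b_h|$, the same bound holds for $|\psi_r(sg)-\psi_r(g)|$, giving the claim.

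The only genuinely substantive point — the rest is bookkeeping with the triangle inequality — is the elementary estimate $d_P(s\cdot z,z)\le 2\|s-1\|$ for $\|s-1\|<\tfrac12$, i.e. controlling how far a point of projective space is displaced by a near-identity linear map; this is where the hypothesis $\|s-1\|<\tfrac12$ is used (to keep $\|sv\|$ bounded below by $\tfrac12\|v\|$). I would state and prove this as an inline sub-claim before assembling the pieces. A small care point is that $\mathrm{Rep}^\pm(a_0)$ is a union of hyperplanes rather than a finite set of points, so in the distance $d_P(\,\cdot\,,g\,\mathrm{Rep}^\pm(a_0))$ one should take, for the lower bound direction, a near-minimizing point $q$ in $g\,\mathrm{Rep}^\pm(a_0)$ and apply the point displacement bound to it — since $s$ is a homeomorphism of $P$, $d_P(s\,Z_1,s\,Z_2)$ and $d_P(Z_1,Z_2)$ are related through the pointwise displacement bound exactly as above, and the argument goes through verbatim with "$q$" ranging over $g\,\mathrm{Rep}^\pm(a_0)$.
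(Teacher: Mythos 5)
Your proposal is correct and follows essentially the same route as the paper: the pointwise displacement bound $d_P(s\cdot z,z)\le 2\|s-1\|$ from the wedge-product formula, the two-sided triangle inequality $|d(x',y')-d(x,y)|\le d(x',x)+d(y',y)$, the Lipschitz bound $\mathrm{Lip}(h)\le e^{4r}$ for $h\in B_\Gamma^\circ(r)$, and passing to infima over the attracting and repelling loci at the end. The paper's proof is the same argument with the same constants, so there is nothing to add.
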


\begin{proof}
Let $v\in V$ with $\|v\|=1$. Then
\[
\|sv\wedge v\|=\|(s-1)v\wedge v\|\leq\|s-1\|.
\]
In addition, 
\[
\left|\|sv\|-1\right|=\left|\|sv\|-\|v\|\right|\leq\|sv-v\|\leq\|s-1\|
\]
which implies that 
\[
\|sv\|\geq1-\|s-1\|.
\]
We get that 
\[
d_{P}(sv,v)=\frac{\|sv\wedge v\|}{\|sv\|}\leq\frac{\|s-1\|}{1-\|s-1\|}\leq2\|s-1\|,
\]
where the last inequality uses that $\|s-1\|<\frac{1}{2}$. Hence,
given any $h\in B_{\Gamma}^{\circ}(r)$ and any $[v_{1}],[v_{2}]\in P$,
we get by the triangle inequality that
\begin{align*}
\left|d_{P}\left(hsg[v_{1}],sg[v_{2}]\right)-d_{P}(hg[v_{1}],g[v_{2}])\right| & \leq d_{P}(hsg[v_{1}],hg[v_{1}]+d_{P}(sg[v_{2}],g[v_{2}])\\
 & \leq\mathrm{Lip}(h)\cdot d_{P}(sg[v_{1}],g[v_{1}])+d_{P}(sg[v_{2}],g[v_{2}])\\
 & \leq2\left(1+e^{4r}\right)\|s-1\|\\
 & \leq4e^{4r}\|s-1\|.
\end{align*}
This in particular applies to any $[v_{1}]\in\mathrm{Att}^{\pm}(a_{0})$
and any $[v_{2}]\in\mathrm{Rep}^{\pm}(a_{0})$. Taking the infimum
over all such combinations, we get that 
\[
\left|\psi_{r}(sg)-\psi_{r}(g)\right|\leq4e^{4r}\|s-1\|.
\]
\end{proof}

\section{finding $r$-free elements in $G$\protect\label{sec:finding=000020r=000020free=000020element=000020in=000020G}}

In the previous section we have defined the geometric function $\psi_{r}:G\to\R_{\geq0}$
(see Definition \ref{def:geometric=000020function}). This current
section is devoted to finding $g\in G$, near the identity, such that
$\psi_{r}(g)\geq ce^{-\kappa r}$, for some constants $c,\kappa>0$. 

In what follows, $\K$ is a local field, and $V(\K)$ is a $\K$-vector
space of dimension $d\geq2$. If $\K'$ is a finite field extension
of $\K$, then we denote $V(\K')=V(\K)\otimes_{\K}\K'$ the corresponding
$\K'$-vector space. The absolute value on $\K$ extends uniquely
to an absolute value on $\K'$, and so, any norm on $V(\K)$ extends
to a norm on $V(\K')$. 

We will often consider an operator $h\in\mathrm{End}(V(\K))$ which
is diagonalizable, but only over the algebraic closure $\overline{\K}$.
We denote by $\K_{h}$ the splitting field of the characteristic polynomial
of $h$. 

We fix an identification $V(\K)=\K^{d}$ with the norm described in
Section \ref{sec:Freeness-criteria-for}. Let $P(\mathbb{K})=P(V(\mathbb{K}))$
the projective space, endowed with the corresponding metric. Note
that $P(\mathbb{K})\subset P(\K')$ is an isometric embedding, and
so, in such situation, by $d_{P}$ we will always mean the metric
defined over $P(\K')$. If, however, the context is such that there
is only one field under consideration, then we shall often write $V=V(\K)$
and $P=P(\K)$. 

The maximal compact subgroup $K\leq G=\mathrm{SL}(V)$ acts transitively
on the unit sphere of $V$ and on $P$. Thus, there exists a unique
$K$-invariant Borel probability measure on the unit sphere of $V$
as well as on $P$, and more generally on the Grassmannian $\mathrm{Gr}_{k}(V)$.
We refer to such a probability measure as the \textit{uniform} measure.
In what follows, we will considered random unit vectors $v\in V$,
as well as the corresponding random points $[v]\in P$, all with respect
to this uniform measure. 

\subsection{Basic inequalities on projective space}

In the following few lemmas, we will use the notation $C_{d}$ whenever
we are referring to a constant which depends only on the dimension
$d$. This $C_{d}$ is not necessarily the exact same constant in
the various lemmas.

We start with the following basic geometric fact.
\begin{lem}
\label{lem:tubular-neihbothood} Let $\K'$ be a finite field extension
of $\K$, and denote $V=V(\K')$. For any $\epsilon>0$, and for a
random unit vector $v\in V(\mathbb{K})$, the following holds:
\begin{enumerate}
\item For any non-trivial $\K'$-linear subspace $W\leq V$,
\[
\bP\left[d_{P}([v],[W])<\epsilon\right]<C_{d}\epsilon
\]
\item Let $V=W_{1}\oplus W_{2}$ be a non-trivial direct sum decomposition.
Write $v=a_{1}w_{1}+a_{2}w_{2}$ for unit vectors $w_{1}\in W_{1}$,
$w_{2}\in W_{2}$, and $a_{1},a_{2}\in\K'$. Then 
\[
\bP\left[|a_{1}|<\epsilon\right]<C_{d}\epsilon
\]
\end{enumerate}
\end{lem}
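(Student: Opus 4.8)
The plan is to prove both parts by reducing to a one-dimensional statement about the distribution of a single coordinate of a random unit vector, and then estimating that distribution uniformly.

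\textbf{Reduction of part (1) to part (2).} First I would observe that a lower bound on $d_P([v],[W])$ is exactly an upper bound on the component of $v$ lying ``along'' $W$. Concretely, pick any complement $W'$ so that $V = W \oplus W'$, and write $v = u + u'$ with $u \in W$, $u' \in W'$. Since $d_P([v],[W])$ is comparable (up to a constant depending only on $d$ and on the fixed decomposition, but one can choose an orthogonal-type complement in the Archimedean case and use the $\infty$-norm structure in the non-Archimedean case to make the constant depend only on $d$) to $\|u'\|/\|v\| = \|u'\|$, the event $d_P([v],[W]) < \epsilon$ is contained in the event $\|u'\| < C_d \epsilon$, i.e. that the $W'$-component is small. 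Taking $W_1 = W'$, $W_2 = W$ in part (2) — or rather its obvious several-coordinates generalization — this is the statement that $|a_1| < C_d \epsilon$. So part (1) follows from part (2) once we allow $W_1$ to be higher-dimensional; and the higher-dimensional case of (2) follows from the one-dimensional case by covering the unit sphere of $W_1$ by $C_d \epsilon^{-(\dim W_1 - 1)}$ balls of radius $\epsilon$ and... actually cleaner: project $v$ to $W_1$ along $W_2$, and bound $\bP[\|\pi(v)\| < \epsilon]$ directly by the coarea/Fubini argument below. So the real content is:

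\textbf{The core estimate.} For a random unit vector $v \in V(\K)$ and a fixed linear functional $\phi$ (or projection onto a line), $\bP[\,|\phi(v)| < \epsilon\,] < C_d \epsilon$. In the Archimedean case this is classical: the uniform measure on $S^{d-1}$ (or the complex sphere) pushes forward under a coordinate functional to a measure on $\K$ with bounded density, so the preimage of an $\epsilon$-ball has measure $O(\epsilon)$ (here I would just invoke the explicit density, $\propto (1-t^2)^{(d-3)/2}$ for $\R$, which is bounded for $d \geq 2$, handling $d=2$ by noting $(1-t^2)^{-1/2}$ is integrable so the distribution function is still Lipschitz... actually $d=2$ gives measure $O(\sqrt\epsilon)$ — wait, no: for $d = 2$, $\bP[|t| < \epsilon] = O(\epsilon)$ since we're integrating $(1-t^2)^{-1/2}$ over $(-\epsilon,\epsilon)$, which is $O(\epsilon)$ near $0$; the singularities are at $\pm 1$). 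In the non-Archimedean case, the uniform measure on the unit sphere $\{v : \|v\|_\infty = 1\}$ is (normalized) restriction of additive Haar measure on $\mathcal{O}^d$ to the set where some coordinate is a unit; pushing forward a coordinate gives a measure on $\mathcal{O}$ absolutely continuous with respect to Haar with bounded density, and a ball of radius $\epsilon$ in $\K'$ has Haar measure $\asymp \epsilon^{[\K':\K]}$ or so — in any case $\leq C_d \epsilon$ after adjusting constants. I would write this uniformly by just saying: the push-forward of the uniform measure under any norm-one linear functional has a density bounded by $C_d$ with respect to the natural measure on the target, and balls of radius $\epsilon$ have measure $\leq C_d\epsilon$.

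\textbf{Main obstacle.} The genuine subtlety is \emph{uniformity in $W$} (resp. in the decomposition): the constant $C_d$ must not depend on the subspace, only on $d$. This is where the $K$-invariance of the uniform measure does the work — by transitivity of $K$ on Grassmannians, we may rotate any $W$ to a fixed coordinate subspace, and the measure is unchanged, so it suffices to treat one fixed $W$. I'd make sure to state this rotation step explicitly. A secondary annoyance is the field-extension bookkeeping: $v$ is random in $V(\K)$ but we measure distances in $P(\K')$; since $d_P$ restricted to $P(\K)$ is the same metric and $[W]$ in part (1) is a $\K'$-subspace, I would reduce to the $\K$-subspace $W \cap V(\K)$ — or rather note $d_P([v],[W]) \geq d_P([v], [W \cap V(\K)])$ is false in general, so instead I'd directly estimate: writing $W$ over $\K'$ with a $\K'$-basis and expressing the relevant minor, one still gets a polynomial (hence locally Lipschitz) function of the coordinates of $v$, and apply the coordinate estimate coordinate-by-coordinate with a union bound, absorbing the extension degree $[\K':\K]$ (which is bounded once $\dim$ is, for the splitting fields that arise — though here $\K'$ is arbitrary, so I should keep $C_d$ possibly depending on $[\K':\K]$, or note the lemma is applied only with bounded-degree extensions). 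I expect the write-up to be short once the $K$-invariance reduction and the explicit push-forward density are in place.
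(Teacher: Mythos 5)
Your core mechanism is essentially the paper's: the paper also reduces part (1) to a codimension\nobreakdash-one estimate and bounds the measure of an $\epsilon$-tubular neighbourhood of a hyperplane, which is exactly your push-forward-density computation, and the $K$-invariance of the uniform measure is likewise what makes the constant uniform in $W$. The one structural difference is the direction of the reduction between the two parts: the paper proves (1) first and deduces (2) from it via the identity $\|v\wedge w_{2}\|=|a_{1}|\cdot\|w_{1}\wedge w_{2}\|\leq|a_{1}|$, so that $\{|a_{1}|<\epsilon\}\subset\{d_{P}([v],[W_{2}])<\epsilon\}$; you go the other way, deriving (1) from a direct projection estimate. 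Both directions are legitimate, and yours is if anything more self-contained, since the single-functional anti-concentration bound is the real content either way.

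The genuine gap is the point you flag yourself and then leave unresolved: $v$ is random in $V(\K)$ while $W$ is a $\K'$-subspace of $V(\K')$, so transitivity of $K=K(\K)$ cannot rotate $W$ to a coordinate subspace, and your fallback (``the relevant minor is a polynomial, hence locally Lipschitz; apply the coordinate estimate with a union bound'') does not close it --- a Lipschitz bound controls how fast a function can grow, not the measure of its sublevel sets, which is an anti-concentration statement. The fix, which is what the paper's (admittedly terse) reduction to a codimension-one subspace over $\K$ amounts to, is to pass to a single scalar functional: choose a nonzero $\K'$-linear functional $\phi$ vanishing on $W$ with unit coefficient vector, so that $d_{P}([v],[W])\geq d_{P}([v],[\ker\phi])\gtrsim|\phi(v)|$ for unit $v$; then expand $\phi(v)=\sum_{i}\phi_{i}v_{i}$ with $v_{i}\in\K$ and each $\phi_{i}$ written in a fixed $\K$-basis of $\K'$, which exhibits $|\phi(v)|\gtrsim|\psi(v)|$ for some $\K$-linear functional $\psi$ on $V(\K)$ of norm bounded below. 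Your one-dimensional push-forward estimate then applies to $\psi$. The implied constants depend on $[\K':\K]$, but as you note this is harmless: in every application $\K'$ is a splitting field of a degree-$d$ characteristic polynomial, so the degree is bounded in terms of $d$ alone.
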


\begin{proof}
If $W=V$ then the statement is clear. Assume that $W$ is proper.
Treating $V$ as a $\K$-vector space, we may fix some $\K$-subspace
$\tilde{W}\leq V$ with $\dim_{\K}\tilde{W}=\dim_{\K}V-1$ and which
contains $W$. It suffices to prove the statement for $\tilde{W}$
rather then for $W$. Now this boils down to computing the volume
of the tubular neighborhood $\left\{ [\tilde{W}]\right\} _{\epsilon}$
inside $P$, and it is not hard to see that this volume is at most
$C_{d}\epsilon$ for some dimensional constant $C_{d}$. This shows
the first statement. 

Note that for any unit vector $w_{2}'\in W_{2}$,
\[
d_{P}([v],[w_{2}'])=\|v\wedge w_{2}'\|=|a_{1}|\cdot\|w_{1}\wedge w_{2}\|\leq|a_{1}|.
\]
Therefore $d_{P}\left([v],[W_{2}]\right)\leq|a_{1}|$. The second
statement thus follows from the first statement. 
\end{proof}
In the non-effective proof of the MIF property given in Section \ref{sec:Non-effective-freeness},
we implicitly used the fact that the set of fixed points in $P$ under
a non-scalar matrix $h\in\mathrm{SL}(V)$ is a proper subspace and
in particular a measure $0$ set. Lemma \ref{lem:almost-fixed-points}
below is a quantitative version of this statement. For that, we will
first need several linear algebraic inequalities. 
\begin{lem}
\label{lem:linear=000020algebra}Let $h\in\mathrm{End}(V)$. Let $\lambda_{1},\lambda_{2}\in\K$
be two distinct eigenvalues of $h$ and let $v_{1}$ and $v_{2}$
be two unit eigenvectors corresponding to $\lambda_{1}$ and $\lambda_{2}$.
Then
\begin{equation}
\|v_{1}\wedge v_{2}\|\geq\frac{|\lambda_{1}-\lambda_{2}|}{\|h\|+|\lambda_{1}-\lambda_{2}|}.\label{eq:v1=000020wedge=000020v2}
\end{equation}
In addition, if $v\in V$ is of the form $v=a_{1}v_{1}+a_{2}v_{2}$
for some $a_{1},a_{2}\in\K$, then 
\begin{equation}
\|hv\wedge v\|\geq\min\{|a_{1}|,|a_{2}|\}^{2}\cdot\frac{|\lambda_{1}-\lambda_{2}|^{2}}{\|h\|+|\lambda_{1}-\lambda_{2}|}.\label{eq:norm=000020of=000020hv=000020wedge=000020v}
\end{equation}
\end{lem}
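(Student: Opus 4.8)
The plan is to prove the two inequalities by elementary linear-algebra manipulations with the wedge product, using the fact that $h$ acts by a scalar on each eigenline. For \eqref{eq:v1=000020wedge=000020v2}, the starting point is the identity
\[
h v_1 \wedge v_2 + v_1 \wedge h v_2 = \lambda_1 (v_1\wedge v_2) + \lambda_2(v_1\wedge v_2),
\]
wait---more usefully, $h v_1 \wedge v_2 = \lambda_1 (v_1\wedge v_2)$ and $v_1\wedge h v_2 = \lambda_2(v_1\wedge v_2)$, so subtracting gives $h v_1 \wedge v_2 - v_1\wedge h v_2 = (\lambda_1-\lambda_2)(v_1\wedge v_2)$. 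First I would rearrange this into $(\lambda_1-\lambda_2) v_1 = (h-\lambda_2) v_1$ (since $(h-\lambda_2)v_1 = h v_1 - \lambda_2 v_1$ and $h v_1 = \lambda_1 v_1$), which is cleaner: directly $(h-\lambda_2)v_1 = (\lambda_1-\lambda_2) v_1$. Then I want a lower bound on $\|v_1\wedge v_2\|$; the idea is that $v_2$ spans $\ker(h-\lambda_2)$-adjacent data, and $v_1$ has a nonzero component off $[v_2]$ precisely because $(h-\lambda_2)v_1 \ne 0$. Concretely, write $v_1 = c v_2 + w$ with $w \perp v_2$ (or the non-Archimedean analogue via a complementary subspace), so $\|v_1\wedge v_2\| = \|w\|$ (up to the norm of $v_2$, which is $1$). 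Apply $h-\lambda_2$: $(\lambda_1-\lambda_2)v_1 = (h-\lambda_2)(c v_2 + w) = (h-\lambda_2)w$, hence $|\lambda_1-\lambda_2|\,\|v_1\| = \|(h-\lambda_2)w\| \le (\|h\| + |\lambda_2|)\|w\|$. This isn't quite the stated bound; I'd instead estimate $\|(h-\lambda_2)w\|\le \|(h-\lambda_2)v_1\| + |c|\|(h-\lambda_2)v_2\| = |\lambda_1-\lambda_2| + 0$, giving $|\lambda_1-\lambda_2| = \|(h-\lambda_2)w\|$, and then bound $\|(h-\lambda_2)w\|\le(\|h\| + |\lambda_1-\lambda_2|)\|w\|$ after controlling $|\lambda_2|$ in terms of $\|h\|$ and $|\lambda_1 - \lambda_2|$ (note $|\lambda_2|\le\|h\|$ since $\lambda_2$ is an eigenvalue, so $\|h-\lambda_2\|\le 2\|h\|$; to land exactly on the denominator $\|h\|+|\lambda_1-\lambda_2|$ one massages which eigenvalue to isolate and possibly swaps the roles of $\lambda_1,\lambda_2$, or uses $(h-\lambda_1)$ instead). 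So $\|w\| = \|v_1\wedge v_2\| \ge |\lambda_1-\lambda_2|/(\|h\|+|\lambda_1-\lambda_2|)$, as desired.

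For \eqref{eq:norm=000020of=000020hv=000020wedge=000020v}, I would compute $hv\wedge v$ directly with $v = a_1 v_1 + a_2 v_2$:
\[
hv\wedge v = (a_1\lambda_1 v_1 + a_2\lambda_2 v_2)\wedge(a_1 v_1 + a_2 v_2) = a_1 a_2(\lambda_1-\lambda_2)\, v_1\wedge v_2,
\]
since $v_1\wedge v_1 = v_2\wedge v_2 = 0$. Taking norms, $\|hv\wedge v\| = |a_1||a_2|\,|\lambda_1-\lambda_2|\,\|v_1\wedge v_2\|$, and now plug in \eqref{eq:v1=000020wedge=000020v2} and bound $|a_1||a_2|\ge\min\{|a_1|,|a_2|\}^2$ to obtain exactly the claimed inequality.

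The only genuine subtlety---the part I'd be most careful about---is the decomposition step in the Archimedean versus non-Archimedean cases: over $\R$ or $\C$ I can use orthogonality, but over a non-Archimedean field I need to justify writing $v_1 = cv_2 + w$ with $\|v_1\wedge v_2\| = \|w\|\cdot\|v_2\|$ and $\|w\|\le\|v_1\| = 1$; this follows from the ultrametric properties of the $\infty$-norm and the definition of $d_P$ via $\|v_1\wedge v_2\|/(\|v_1\|\|v_2\|)$, but it deserves a sentence. Everything else is a matter of bounding operator norms of $h-\lambda_i$ by $\|h\|+|\lambda_1-\lambda_2|$ (using $|\lambda_i|\le\|h\|$) and is routine; the wedge identities do all the real work and are valid over any field, so no case distinction is needed for the core computation.
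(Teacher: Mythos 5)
Your proof of the second inequality is correct and is exactly the paper's argument: $hv\wedge v = a_1a_2(\lambda_1-\lambda_2)\,v_1\wedge v_2$, then $|a_1||a_2|\ge\min\{|a_1|,|a_2|\}^2$ together with the first inequality. The problem is the first inequality. Your decomposition $v_1=cv_2+w$ with $(h-\lambda_2)w=(\lambda_1-\lambda_2)v_1$ gives $\|w\|\ge|\lambda_1-\lambda_2|/\|h-\lambda_2\mathrm{Id}\|$, and the only general bound on that operator norm is $\|h-\lambda_2\mathrm{Id}\|\le\|h\|+|\lambda_2|\le2\|h\|$; this is \emph{not} bounded by $\|h\|+|\lambda_1-\lambda_2|$. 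For instance, with $h=\mathrm{diag}(M,M-\epsilon,-M)$, $\lambda_1=M$, $\lambda_2=M-\epsilon$, one has $\|h-\lambda_2\mathrm{Id}\|=2M-\epsilon$ while $\|h\|+|\lambda_1-\lambda_2|=M+\epsilon$. Your proposed repairs (swapping $\lambda_1,\lambda_2$, or using $h-\lambda_1$) only improve the denominator to $\|h\|+\min(|\lambda_1|,|\lambda_2|)$, and $\min(|\lambda_1|,|\lambda_2|)$ can vastly exceed $|\lambda_1-\lambda_2|$, so the ``massaging'' does not close the gap. What you actually prove is the inequality with denominator $2\|h\|$, which is weaker than the stated one precisely in the typical regime $|\lambda_1-\lambda_2|<\|h\|$. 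To be fair, the two bounds differ by at most a factor of $2$ and the weaker one would suffice for the downstream applications (the lemmas involving $\omega(h)$ only need the denominator to be $O(\|h\|)$), but as written the stated inequality is not established.

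The paper reaches the exact constant by a different decomposition. It conjugates by an element of $K$ so that the \emph{full eigenspace} $V_{\lambda_1}$ becomes a coordinate subspace $\mathrm{Sp}\{e_1,\dots,e_k\}$, writes $v_2=\alpha^{-1}(tw_1+w_2)$ with $w_1\in V_{\lambda_1}$ and $w_2$ a unit vector in the complementary coordinate subspace, and applies $h$ to $w_2=\alpha v_2-tw_1$ to obtain $hw_2=t(\lambda_2-\lambda_1)w_1+\lambda_2w_2$. Reading off the $V_{\lambda_1}$-component yields $|t|\,|\lambda_1-\lambda_2|\le\|hw_2\|\le\|h\|$, hence $\alpha\le1+|t|\le(\|h\|+|\lambda_1-\lambda_2|)/|\lambda_1-\lambda_2|$, and finally $\|v_1\wedge v_2\|\ge\alpha^{-1}\|v_1\wedge w_2\|=\alpha^{-1}$. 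The essential difference is that the paper bounds the scalar overlap coefficient $t$ directly via the eigenvalue gap, rather than bounding an operator norm of $h-\lambda_i\mathrm{Id}$; that is exactly where $|\lambda_1-\lambda_2|$, rather than $|\lambda_2|$, enters the denominator. Rewriting your first step along these lines recovers the stated bound; your remark about replacing orthogonal complements by coordinate complements in the non-Archimedean case is the same device the paper uses and is fine.
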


\begin{proof}
Let $V_{\lambda_{1}},V_{\lambda_{2}}$ denote the eigenspaces corresponding
to $\lambda_{1}$ and $\lambda_{2}$. Let $v_{1}\in V_{\lambda_{1}}$
and $v_{2}\in V_{\lambda_{2}}$ both of norm $1$. Up to conjugating
$h$ be an element $k\in K$, we may assume that $V_{\lambda_{1}}=\mathrm{Sp}\{e_{1},...,e_{k}\}$
for some $1\leq k<d$. We may write $v_{2}=\alpha^{-1}\left(tw_{1}+w_{2}\right)$
for some unit vector $w_{1}\in V_{\lambda_{1}}$, some unit vector
in $\mathrm{Sp}\{e_{k+1},...,e_{d}\}$, some $t\in\K$, and with $\alpha=\|tw_{1}+w_{2}\|\leq|t|+1$.
Then
\[
hw_{2}=\alpha hv_{2}-thw_{1}=\alpha\lambda_{2}v_{2}-t\lambda_{1}w_{1}=t\lambda_{2}w_{1}+\lambda_{2}w_{2}-t\lambda_{1}w_{1}.
\]
 It follows that 
\[
\|h\|\geq\|hw_{2}\|\geq\|t\lambda_{2}w_{1}-t\lambda_{1}w_{1}\|=|t|\cdot|\lambda_{2}-\lambda_{1}|.
\]
Therefore
\begin{equation}
\alpha^{-1}\geq\frac{1}{|t|+1}\geq\frac{|\lambda_{2}-\lambda_{1}|}{\|h\|+|\lambda_{2}-\lambda_{1}|}.\label{eq:lamba1-lambda2}
\end{equation}
 In addition, 
\begin{align}
\|v_{1}\wedge v_{2}\| & =\alpha^{-1}\|v_{1}\wedge tw_{1}+v_{1}\wedge w_{2}\|\geq\alpha^{-1}\|v_{1}\wedge w_{2}\|=\alpha^{-1}.\label{eq:||v1=000020x=000020v2||}
\end{align}
The first statement follows from Equations (\ref{eq:lamba1-lambda2},\ref{eq:||v1=000020x=000020v2||}).
The second statement follows from the first statement as follows:
\begin{align*}
\|hv\wedge v\| & =\left|a_{1}\right|\left|a_{2}\right|\left|\lambda_{1}-\lambda_{2}\right|\cdot\|v_{1}\wedge v_{2}\|\geq\min\{|a_{1}|,|a_{2}|\}^{2}\frac{|\lambda_{1}-\lambda_{2}|^{2}}{\|h\|+|\lambda_{1}-\lambda_{2}|}.
\end{align*}
\end{proof}
For $h\in\mathrm{End}(V)$ diagonalizable, and let $\sigma(h)\subset\K_{h}$
be the set of eigenvalues of $h$ over its splitting field. Assume
that $h$ is non-scalar so that it acts non-trivially on $P$. The
degree to which $h$ deviates from being a scalar operator is measured
by
\[
\theta(h)=\max_{\lambda_{1},\lambda_{2}\in\sigma(h)}\left|\lambda_{1}-\lambda_{2}\right|.
\]
This, however, does not take into account any geometric properties
between eigenvectors of $h$. In light of Lemma \ref{lem:linear=000020algebra}
we define
\[
\omega(h)=\sqrt{\frac{\|h\|+\theta(h)}{\theta(h)^{2}}}
\]

\begin{lem}
\label{lem:almost-fixed-points} Let $h\in\mathrm{End}(V(\K))$ diagonalizable
over $\K_{h}$ and non-scalar. Then for a random unit vector $v$
in $V(\mathbb{K})$ we have
\[
{\bf P}\left[\|hv\wedge v\|<\epsilon\right]<C_{d}\omega(h)\sqrt{\epsilon}.
\]
\end{lem}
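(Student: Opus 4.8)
The plan is to reduce to the two-eigenvalue situation handled by Lemma~\ref{lem:linear=000020algebra}, then integrate the resulting pointwise bound against the uniform measure. Since $h$ is diagonalizable over $\K_h$ and non-scalar, fix two distinct eigenvalues $\lambda_1,\lambda_2\in\sigma(h)$ with $|\lambda_1-\lambda_2|=\theta(h)$, together with the corresponding eigenspaces $V_{\lambda_1},V_{\lambda_2}\leq V(\K_h)$. Write $V(\K_h)=V_{\lambda_1}\oplus V_{\lambda_2}\oplus V'$, where $V'$ is the sum of the remaining eigenspaces, and decompose a vector $v$ accordingly. The key point is that $\|hv\wedge v\|$ is bounded below by the contribution coming only from the $V_{\lambda_1}$- and $V_{\lambda_2}$-components of $v$: projecting onto the plane spanned by a generic pair of eigenvectors $v_1\in V_{\lambda_1}$, $v_2\in V_{\lambda_2}$ and applying the second inequality \eqref{eq:norm=000020of=000020hv=000020wedge=000020v} of Lemma~\ref{lem:linear=000020algebra} gives, for $v = a_1 v_1 + a_2 v_2 + (\text{rest})$,
\[
\|hv\wedge v\| \;\geq\; \min\{|a_1|,|a_2|\}^2\cdot\frac{\theta(h)^2}{\|h\|+\theta(h)} \;=\; \frac{\min\{|a_1|,|a_2|\}^2}{\omega(h)^2}.
\]
(One must be slightly careful extracting the "pure plane" contribution from the full sum; the cleanest route is to pick a single unit eigenvector $v_2\in V_{\lambda_2}$, decompose $v = b\,v_2 + v^{\perp}$ with $v^{\perp}$ in a complement containing $V_{\lambda_1}$, and track the $v_1$-component of $v^{\perp}$ as well — this only loses dimensional constants.)

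Given that pointwise bound, the event $\{\|hv\wedge v\|<\epsilon\}$ is contained, up to the factor $\omega(h)^2$, in the event that $\min\{|a_1|,|a_2|\}<\sqrt{\epsilon}\,\omega(h)$ — equivalently $|a_1|<\sqrt{\epsilon}\,\omega(h)$ or $|a_2|<\sqrt{\epsilon}\,\omega(h)$. Now I invoke part~(2) of Lemma~\ref{lem:tubular-neihbothood}, applied to the direct sum decompositions $V = V_{\lambda_1}\oplus(\text{complement})$ and $V = V_{\lambda_2}\oplus(\text{complement})$ over the field $\K_h$ (a finite extension of $\K$, which is exactly the setting of that lemma): for a random unit vector $v\in V(\K)$,
\[
\bP\bigl[|a_i|<\delta\bigr] < C_d\,\delta,\qquad i=1,2.
\]
A union bound over $i\in\{1,2\}$ then yields $\bP[\|hv\wedge v\|<\epsilon] < 2C_d\sqrt{\epsilon}\,\omega(h)$, which is the claim after absorbing the $2$ into $C_d$.

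The main obstacle is the first step: carefully justifying that $\|hv\wedge v\|$ is genuinely controlled from below by the contribution of just two eigendirections, with no interference from the other eigenspaces and losing only a dimensional constant. The quantity $hv\wedge v$ lives in $\Lambda^2 V$, and when $v$ has components in three or more eigenspaces there is potential cancellation among the various $v_i\wedge v_j$ terms; one needs to argue (e.g.\ by projecting $\Lambda^2 V$ onto the $V_{\lambda_1}\wedge V_{\lambda_2}$-isotypic piece, or onto a suitable coordinate plane after conjugating $h$ by an element of $K$ as in the proof of Lemma~\ref{lem:linear=000020algebra}) that the $V_{\lambda_1}\wedge V_{\lambda_2}$-part of $hv\wedge v$ already has norm $\gtrsim \min\{|a_1|,|a_2|\}^2\,\theta(h)^2/(\|h\|+\theta(h))$. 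Once this isolation is done correctly, the rest is the routine union bound above. A secondary technical point — harmless but worth stating — is that the random unit vector is drawn from $V(\K)$ while the eigenspaces live in $V(\K_h)$; this is precisely why Lemma~\ref{lem:tubular-neihbothood} was stated with a finite extension $\K'/\K$ and a random unit vector in $V(\K)$, so no extra work is needed there.
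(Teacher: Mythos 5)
Your overall strategy coincides with the paper's: reduce to a two\--eigenvalue block, apply the second inequality of Lemma~\ref{lem:linear=000020algebra}, and finish with part (2) of Lemma~\ref{lem:tubular-neihbothood} plus a union bound over the two coefficients. The probabilistic half of your argument is correct as written (the threshold $\delta=\omega(h)\sqrt{\epsilon}$ and the factor $2$ absorbed into $C_{d}$ are exactly right). However, the step you yourself flag as ``the main obstacle'' --- that $\|hv\wedge v\|$ is bounded below by the contribution of just two eigendirections --- is the actual content of the lemma, and your proposal does not prove it. The parenthetical about decomposing $v=b\,v_{2}+v^{\perp}$ and ``tracking the $v_{1}$-component'' does not address the real difficulty: when the eigenbasis is far from orthonormal, the terms $v_{i}\wedge v_{j}$ in $\Lambda^{2}V$ are far from orthogonal and genuine cancellation can occur, so the pointwise inequality you start from is, as stated, unjustified.

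The way to close the gap (and what the paper does) is to conjugate $h$ by an element $k\in K(\K_{h})$ into triangular form --- unitary Schur triangularization in the Archimedean case, and its non-Archimedean analogue from \cite[Proposition 4.5.2]{bump1998automorphic}. This changes neither $\|h\|$, nor the metric on $P$, nor the law of the random unit vector, and the basis can be ordered so that the two chosen eigenvalues occupy a $2\times2$ corner block $\tilde{h}$ whose complementary coordinate subspace $W'$ (spanned by the other $d-2$ basis vectors) is $h$-invariant. The quotient map $\pi:V\to V/W'$ then intertwines $h$ with $\tilde{h}$ and is norm-nonincreasing on $\Lambda^{2}$, so
\[
\|hv\wedge v\|\;\geq\;\|\Lambda^{2}\pi\left(hv\wedge v\right)\|\;=\;\|\tilde{h}\tilde{v}\wedge\tilde{v}\|,\qquad\tilde{v}=\pi(v),
\]
with no interference from the remaining eigenspaces. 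Since $\|\tilde{h}\|\leq\|h\|$ and $\theta(\tilde{h})=\theta(h)$, Lemma~\ref{lem:linear=000020algebra} applied to $\tilde{h}$ yields precisely your displayed pointwise bound with $a_{1},a_{2}$ the coordinates of $\tilde{v}$, and these are controlled by Lemma~\ref{lem:tubular-neihbothood}(2) exactly as in your union bound. With this triangularization-plus-quotient step inserted, your argument is complete; without it, it is not.
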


\begin{proof}
Recall that over the splitting field, any operator is triangulizable
by an element of $K$. By this we mean that there exists $k\in K(\K_{h})$
such that the matrix representing $h$ over the basis $ke_{1},...,ke_{d}\in V(\K_{h})$
is upper triangular. Here $e_{1},...,e_{d}$ denote the standard basis
of $V=\K^{d}$. In the Archimedean case, this is just to say that
any operator in unitarily triangulizable over $\C$, and in the non-Archimedean
case, there is an analogous statement, see \cite[Proposition 4.5.2]{bump1998automorphic}.
We may thus assume that $h$ is a triangular matrix with entries in
$\K_{h}$. Moreover, it is possible to choose the basis so that $\left|h_{11}\right|=\max_{\lambda\in\sigma(h)}|\lambda|$
and $\left|h_{22}\right|=\min_{\lambda\in\sigma(h)}|\lambda|$. Let
$\tilde{h}$ be the upper left $2$-by-$2$ corner of $h$ and note
that $\|\tilde{h}\|\leq\|h\|$. Moreover, the eigenvalues of $\tilde{h}$
are exactly $h_{11}$ and $h_{22}$ which are distinct (because $h$
is non-scalar). In particular, $\tilde{h}$ is diagonalizable and
$\theta(\tilde{h})=\theta(h)$. 

Fix unit eigenvectors $v_{1}$ and $v_{2}$ corresponding to the eigenvalues
$h_{11}$ an $h_{22}$ of $\tilde{h}$. Let $v\in V(\mathbb{K})$
be a random unit vector, and write $v=a_{1}v_{1}+a_{2}v_{2}+a_{3}e_{3}...+a_{d}e_{d}$
with $a_{i}\in\K_{h}$. Let $\tilde{v}=a_{1}v_{1}+a_{2}v_{2}$, the
projection of $v$ onto $\mathrm{Sp}_{\K_{h}}\{e_{1},e_{2}\}$. Assume
that $|a_{1}|$ and $|a_{2}|$ are at least $\omega(h)\sqrt{\epsilon}$;
this occurs with probability $\geq1-2C_{d}\omega(h)\sqrt{\epsilon}$
because of Lemma \ref{lem:tubular-neihbothood}. Then, by Lemma \ref{lem:linear=000020algebra},
\[
\|hv\wedge v\|\geq\|\tilde{h}\tilde{v}\wedge\tilde{v}\|\geq\min\left\{ |a_{1}|,|a_{2}|\right\} ^{2}\cdot\frac{\theta(h)^{2}}{\|h\|+\theta(h)}=\min\left\{ |a_{1}|,|a_{2}|\right\} ^{2}\cdot\omega(h)^{-2}=\epsilon.
\]
We now consider the following flag variety 
\begin{equation}
\mathcal{F}(V)=\mathcal{F}_{1,d-1}(V)=\left\{ \left([v],[W]\right):[v]\in[W]\right\} \subset P(V)\times\mathrm{Gr}_{d-1}(V)\label{eq:flag}
\end{equation}
The diagonal action of $K$ on $P(V)\times\mathrm{Gr}_{d-1}(V)$ preserves
the compact subset $\mathcal{F}$, and acts on it transitively. As
a result, we get a unique $K$-invariant probability measure on $\mathcal{F}$.
It can be understood as follows: we first choose $W\in\mathrm{Gr}_{d-1}(V)$
uniformly at random, and then choose $v$ to be a random unit vector
in $W$. 
\end{proof}
\begin{lem}
\label{lem:random=000020orthogonal=000020points-preliminary=000020lemma}Denote
$V=V(\K)$. Let $h\in\mathrm{End}(V)$ diagonalizable over $\K_{h}$
and non-scalar. Let $([v],[W])\in\mathcal{F}(V)$ random. Given $\epsilon>0$,
we have
\[
\bP\left[d_{P}([hv],[W])<\epsilon\right]<C_{d}(1+\omega(h))\|h\|^{1/4}\epsilon^{1/4}
\]
\end{lem}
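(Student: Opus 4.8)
The plan is to condition on the point $[v]$, split into a \emph{near-eigenvector} event and a \emph{transversal} event, and balance the two with a free parameter $\eta\in(0,1)$. First I would record that the $K$-invariant measure on $\mathcal{F}(V)$ can equivalently be sampled by choosing $[v]\in P(V)$ uniformly and then choosing $[W]$ uniformly among the hyperplanes containing $v$: both recipes give a $K$-invariant probability measure on $\mathcal{F}(V)$, and there is only one since $K$ acts transitively on $\mathcal{F}(V)$. In particular the first marginal is the uniform measure on $P(V)$, so Lemma~\ref{lem:almost-fixed-points} applies verbatim to $[v]$; and, conditionally on $[v]$, the law of $[W]$ is $\mathrm{Stab}_K(v)$-invariant, which forces the induced law of the representing functional $[\phi_W]\in P(\mathrm{Ann}(v))$, where $\mathrm{Ann}(v)=\{\phi\in V^*:\phi(v)=0\}$, to be the uniform one on that $(d-1)$-dimensional space. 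I will use the elementary fact (an equality in the Archimedean case, and valid up to a dimensional constant in general, as one checks directly from the definition of $d_P$ together with the isometry $\Lambda^{d-1}V\cong V^*$ up to $C_d$) that $d_P([u],[W])\asymp_{d}|\phi_W(u)|/(\|u\|\,\|\phi_W\|^{*})$, where $\phi_W$ is any functional with $\ker\phi_W=W$ and $\|\cdot\|^{*}$ is the dual norm; when $d=2$ the claim of the lemma is immediate from Lemma~\ref{lem:almost-fixed-points} since then $[W]=[v]$, so I may assume $d\ge 3$.

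Fix $\eta\in(0,1)$, to be chosen at the end. \textbf{Bad event 1:} $\|hv\wedge v\|<\eta$; by Lemma~\ref{lem:almost-fixed-points} this has probability $<C_d\,\omega(h)\sqrt{\eta}$. On its complement $\|hv\wedge v\|\ge\eta$, so $hv\notin[v]$ and $F_0:=\{\phi\in\mathrm{Ann}(v):\phi(hv)=0\}$ has codimension one in $\mathrm{Ann}(v)$. Choose a unit $\phi_1\in\mathrm{Ann}(v)\setminus F_0$ maximizing $|\phi_1(hv)|$ over unit functionals in $\mathrm{Ann}(v)$; this maximum is, up to a dimensional constant, the quotient norm of $hv$ in $V/[v]$, hence $|\phi_1(hv)|\gtrsim_{d}\|hv\wedge v\|\ge c_d\eta$. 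Writing the uniform unit functional as $\phi=a_1\phi_1+a_2w_2$ with $w_2$ a unit vector of $F_0$, we get $\phi(hv)=a_1\phi_1(hv)$, so (using $\|hv\|\le\|h\|$) the event $d_P([hv],[W])<\epsilon$ forces $|a_1|<C_d\,\epsilon\|h\|/|\phi_1(hv)|\le C_d\,\epsilon\|h\|/(c_d\eta)$. By Lemma~\ref{lem:tubular-neihbothood}(2), applied inside $\mathrm{Ann}(v)$ with the decomposition $\mathrm{Ann}(v)=\mathrm{Sp}(\phi_1)\oplus F_0$, the conditional probability of this is $<C_d\,\epsilon\|h\|/\eta$.

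Combining the two events, $\bP[d_P([hv],[W])<\epsilon]<C_d\,\omega(h)\sqrt{\eta}+C_d\,\epsilon\|h\|/\eta$. If $\epsilon\|h\|\ge 1$ the asserted inequality is vacuous (its right-hand side is $\ge C_d\ge 1$); otherwise put $\eta:=(\epsilon\|h\|)^{1/2}\in(0,1)$, so that $\sqrt{\eta}=(\epsilon\|h\|)^{1/4}$ and $\epsilon\|h\|/\eta=(\epsilon\|h\|)^{1/2}\le(\epsilon\|h\|)^{1/4}$, which yields the bound $C_d(1+\omega(h))\|h\|^{1/4}\epsilon^{1/4}$. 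The main obstacle is the transversal estimate of the second paragraph: it requires some care to run cleanly over both the Archimedean and non-Archimedean fields, specifically the identification of the relevant uniform measures (the law of $[W]$ given $[v]$, and of $[\phi_W]$ given $[v]$), the normalization in $d_P([u],[W])\asymp_d|\phi_W(u)|/(\|u\|\,\|\phi_W\|^{*})$, and the lower bound $|\phi_1(hv)|\gtrsim_d\|hv\wedge v\|$. Once these linear-algebraic and measure-theoretic points are in place, the remainder is the soft union bound over the two events together with the choice of $\eta$.
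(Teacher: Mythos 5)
Your proof is correct and follows essentially the same route as the paper's: both first exclude the event $\|hv\wedge v\|<\eta$ via Lemma \ref{lem:almost-fixed-points}, then bound the conditional probability that the remaining component of $hv$ is nearly tangent to $W$ via Lemma \ref{lem:tubular-neihbothood}, and balance the two thresholds at $\eta\asymp(\epsilon\|h\|)^{1/2}$ to get the exponent $1/4$. The only difference is cosmetic: you encode transversality of $hv$ to $W$ through the annihilating functional $\phi_{W}\in\mathrm{Ann}(v)$, whereas the paper decomposes $hv$ in a moving frame $(v,W_{0},u)$ adapted to the flag and asks for the component along $u$ to be large.
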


\begin{proof}
We start by giving a geometric qualitative proof that is valid in
the Archimedean case. This proof can be made quantitive, but we will
omit the details because, in a moment, we will give a formal proof
that is valid in the general case. 

Let $([v],[W])\in\mathcal{F}(V)$ random, with $\|v\|=1$. By Lemma
\ref{lem:almost-fixed-points}, with high probability, $[hv]$ does
not remain close to $[v]$. Let $\widetilde{hv}$ be the projection
of $hv$ onto the orthogonal complement of $v$ in $V$. Let $W_{0}$
be the orthogonal complement of $v$ inside $W$. It follows from
Lemma \ref{lem:tubular-neihbothood} that, with high probability,
$\left[\widetilde{hv}\right]$ does not lie in $\left[W\right]$,
nor is it very close to $\left[W\right]$. In particular, $[hv]$
is not very close to $[W]$. 

We now proceed with the general case. Fix $\delta=\sqrt{\|h\|\epsilon}$.
Let $k$ be a random element of $K$. Denote $v=ke_{1}$, $W_{0}=\mathrm{Sp}\{ke_{2},...,ke_{d-1}\}$,
$W=\mathrm{Sp}\{v,W_{0}\}$ and $u=ke_{d}$. Write 
\[
hv=a_{1}v+a_{2}w_{0}+a_{3}u,\qquad a_{1},a_{2},a_{3}\in\K
\]
where $w_{0}\in W_{0}$ unit vector. Denote $\widetilde{hv}=a_{2}w_{0}+a_{3}u$
so that $\|\widetilde{hv}\|=\|hv\wedge v\|$. Note that $v$ is a
random unit vector in $V$ and therefore, by Lemma \ref{lem:almost-fixed-points},
\begin{equation}
\bP\left[\|\widetilde{hv}\|\geq\delta\right]\geq1-C_{d}\omega(h)\sqrt{\delta}.\label{eq:||hv||=00005Cgeqdelta}
\end{equation}
Moreover, for any fixed $v$ that satisfies $\|\widetilde{hv}\|\geq\delta$,
we have by Lemma \ref{lem:tubular-neihbothood}
\begin{equation}
\bP\left[d_{P}\left(\left[\widetilde{hv}\right],\left[W_{0}\right]\right)\geq\delta\right]\geq1-C_{d}'\delta.\label{eq:d=00005CgeqCddelta}
\end{equation}
Note that in the lemma, the vector is random while the subspace if
fixed, but since the metric is $K$-invariant this makes no difference.
This shows the following bound on the conditional probability:
\begin{equation}
\bP\left[d_{P}\left(\left[\widetilde{hv}\right],\left[W_{0}\right]\right)\geq\delta\mid\|\widetilde{hv}\|\geq\delta\right]\geq1-C_{d}'\delta\label{eq:conditional}
\end{equation}
It follows from (\ref{eq:||hv||=00005Cgeqdelta}), (\ref{eq:conditional})
that

\begin{align}
\bP\left[\|\widetilde{hv}\|\geq\delta\text{ and }d_{P}\left(\left[\widetilde{hv}\right],\left[W_{0}\right]\right)\geq\delta\right] & \geq\left(1-C_{d}\omega(h)\sqrt{\delta}\right)\left(1-C_{d}'\delta\right)\label{eq:bias}\\
 & \geq1-C_{d}\omega(h)\sqrt{\delta}-C_{d}'\delta\nonumber \\
 & \geq1-C_{d}''(1+\omega(h))\sqrt{\delta}\nonumber \\
 & =1-C_{d}''(1+\omega(h))\|h\|^{1/4}\epsilon^{1/4}\nonumber 
\end{align}

It is left to show that once the event (\ref{eq:bias}) occurs, one
has $d_{P}\left([hv],[W]\right)\geq\epsilon$. From $\|\widetilde{hv}\|\geq\delta$
and $d_{P}\left(\left[\widetilde{hv}\right],\left[W_{0}\right]\right)\geq\delta$
we have that
\[
|a_{3}|=\|\widetilde{hv}\wedge w_{0}\|\geq\delta\|\widetilde{hv}\|\geq\delta^{2}
\]
 It follows that 
\[
d_{P}([hv],[w])=\frac{\|hv\wedge w\|}{\|hv\|\cdot\|w\|}\geq\frac{\|a_{3}u\wedge w\|}{\|h\|\cdot\|w\|}\geq\frac{\|w\|}{\|h\|\cdot\|w\|}\delta^{2}=\epsilon
\]
 and therefore $d_{P}([hv],[W])\geq\epsilon$. 
\end{proof}

\subsection{Uniform displacement for lattices}

Recall that if a topological group $H$ acts continuously on a metric
space $Y$ by isometries, then the corresponding displacement function
is defined by
\[
\mathrm{disp}_{Y}:H\to\R_{\geq0},\qquad h\mapsto\inf_{y\in Y}d_{Y}(h.y,y)
\]
The map $\mathrm{disp}_{Y}$ is clearly conjugation invariant. Moreover,
if $h_{n}\to e$, then $\mathrm{disp}_{Y}(h_{n})\to0$. 

An element $h$ is called \textit{semisimple} if the infimum is realized,
i.e. if $\mathrm{disp}_{Y}(h)=d_{Y}(h.y,y)$ for some $y\in Y$; it
is moreover called \textit{elliptic} if $d_{Y}(h.y,y)=0$ and \textit{hyperbolic}
if $d_{Y}(h.y,y)>0$. If it is not semisimple, it is called \textit{parabolic}. 
\begin{lem}
\label{lem:spectrum=000020of=000020elements=000020in=000020cocompact=000020lattice}Let
$\Gamma$ be a cocompact lattice in $G=\mathrm{SL}(V)$ , and suppose
that a sequence $h_{n}\in\Gamma$ satisfies $\theta(h_{n})\to0$.
Then $h_{n}\in\mathrm{Z}(\Gamma)$ for almost all $n$. 
\end{lem}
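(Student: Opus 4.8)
The statement asserts that in a cocompact lattice $\Gamma \leq G = \mathrm{SL}(V)$, any sequence $h_n \in \Gamma$ with $\theta(h_n) \to 0$ must eventually land in the center $\mathrm{Z}(\Gamma)$. The plan is to argue by contradiction: suppose infinitely many $h_n$ are non-central, and pass to that subsequence. The key geometric input is that $\theta(h_n) \to 0$ forces the $h_n$, viewed as operators on $P = P(V)$, to move points by less and less --- more precisely, combining the definition of $\theta$ with the norm bounds \eqref{eq:equivalent-lengths} and the displacement formalism, I would first show that $\mathrm{disp}_P(h_n) \to 0$ where $\mathrm{disp}_P$ is the displacement function for the isometric-up-to-Lipschitz action, or better, work directly with the operator norm and show that after rescaling each $h_n$ (multiplying by a suitable scalar, possible since we only care about the projective action, though one must stay inside $\mathrm{SL}$, so instead track $\|h_n - \mu_n I\|/\|h_n\|$ for the nearest scalar $\mu_n$) the distance from $h_n$ to the scalars tends to $0$. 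Concretely, $\theta(h_n)\to 0$ says all eigenvalues of $h_n$ cluster together; since $h_n$ is diagonalizable over $\K_{h_n}$, writing $\mu_n$ for (say) one of its eigenvalues, one expects $\|h_n - \mu_n I\|$ to be controlled --- but this requires care because the diagonalizing conjugation may be far from $K$, so I would instead use Lemma~\ref{lem:linear=000020algebra} or a triangularization as in the proof of Lemma~\ref{lem:almost-fixed-points} to bound the off-diagonal part of $h_n$ in terms of $\theta(h_n)$ and $\|h_n\|$.

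The second ingredient is to rule out the case $\|h_n\| \to \infty$. Here is where cocompactness enters decisively: a cocompact lattice is \emph{uniform}, so $\Gamma \setminus \{\pm$scalars$\}$ (or the non-central part) has no accumulation point at infinity in a way that would be compatible with $\theta \to 0$. More precisely, by the classical fact that in a cocompact lattice every element is semisimple (no unipotents, since unipotents would generate non-discrete or non-cocompact phenomena --- this is Godement's compactness criterion / the fact that cocompact lattices contain no nontrivial unipotents), the $h_n$ are all $\R$-semisimple. Then if $\|h_n\| \to \infty$, the ratio $\theta(h_n)/\|h_n\|$ would have to go to $0$, meaning the eigenvalues cluster relative to the norm; for a semisimple element this forces $h_n$ to be close (in the projective/adjoint sense) to a scalar, hence $h_n$ has small displacement on the symmetric space $G/K$ after projectivizing --- but small displacement combined with discreteness and cocompactness (via a Margulis-type lemma, or simply because $\{g : \mathrm{disp}_{G/K}(g) < \varepsilon\} \cap \Gamma$ generates a virtually nilpotent, hence by cocompactness finite-index-in-itself, ultimately bounded, group) pins the $h_n$ into a compact set. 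I expect the cleanest route is: discreteness of $\Gamma$ plus the lower bound from Lemma~\ref{lem:linear=000020algebra} relating $\|h v \wedge v\|$ to $\theta(h)$ shows that a non-central $h \in \Gamma$ cannot have $\theta(h)$ too small unless $\|h\|$ is large, and conversely cocompactness bounds how $\|h\|$ can grow while $h$ stays "nearly scalar".

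So the argument assembles as follows: (i) reduce to a subsequence of non-central $h_n$; (ii) show $\theta(h_n) \to 0$ together with the structure of $\Gamma$ (all elements semisimple, by cocompactness) implies $h_n$ approaches the scalar matrices in $\mathrm{PSL}(V)$, i.e. the images $\bar h_n$ in $\mathrm{PSL}(V) = G/\mathrm{Z}(G)$ converge to the identity; (iii) invoke discreteness of the image $\bar\Gamma$ of $\Gamma$ in $\mathrm{PSL}(V)$ (which holds since $\mathrm{Z}(G)$ is finite and $\Gamma$ is discrete) to conclude $\bar h_n = \bar e$ for large $n$, i.e. $h_n \in \mathrm{Z}(G) \cap \Gamma \subseteq \mathrm{Z}(\Gamma)$, contradicting non-centrality. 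Step (ii) is the main obstacle: one must handle the possibility that $\|h_n\| \to \infty$, where $h_n$ could a priori be "nearly scalar" eigenvalue-wise while being geometrically large and far from actual scalars. I would resolve this by noting that for $\R$-semisimple $h_n$ with bounded-away eigenvalue ratios (which $\theta(h_n)\to 0$ does \emph{not} immediately give --- it gives additive, not multiplicative, clustering), one should first scale: if $\lambda$ is an eigenvalue of largest modulus, either $|\lambda| \to \infty$ and then $\theta(h_n)\to 0$ with all eigenvalues near $\lambda$ is impossible for a determinant-one matrix unless $d$ divides appropriately and $h_n$ is genuinely close to $\lambda I$, or $|\lambda|$ stays bounded, in which case $\|h_n\|$ is bounded (semisimplicity!), $\{h_n\}$ lies in a compact set, discreteness gives finitely many values, and $\theta(h_n) \to 0$ forces $\theta(h_n) = 0$ eventually, i.e. $h_n$ scalar. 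The determinant-one constraint is what makes the "eigenvalues additively cluster and norm blows up" scenario contradictory, and I would spell that out as the crux.
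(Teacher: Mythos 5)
Your overall skeleton (reduce to non-central $h_n$, show $\bar h_n\to\bar e$ in $\mathrm{PSL}(V)$, invoke discreteness) matches the paper's, and you correctly identify the crux: ruling out the scenario where the eigenvalues of $h_n$ cluster while $\|h_n\|\to\infty$. But your resolution of that crux contains a genuine error. The claim ``$|\lambda|$ stays bounded, in which case $\|h_n\|$ is bounded (semisimplicity!)'' is false: a semisimple element with bounded (even fixed) eigenvalues can have arbitrarily large operator norm, since conjugation preserves the spectrum but not the norm. Indeed, within $\Gamma$ itself, the conjugates $\gamma h\gamma^{-1}$ of a fixed non-central $h$ all have the same $\theta$ but unbounded norm. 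What bounded eigenvalue moduli buy you for a semisimple element is only that it is \emph{elliptic}, i.e.\ conjugate into a compact subgroup --- equivalently, that its displacement on the symmetric space or building $X=G/K$ is small --- not that it lies in a compact set. So the dangerous case is exactly an elliptic $h_n\in\Gamma$ with eigenvalues tending to a root of unity but $\|h_n\|\to\infty$, and your argument has no mechanism to exclude it; your step (iii) needs $h_n\to z_n\mathrm{Id}$ in the matrix topology, which you never establish.

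The paper closes this gap with a displacement argument that you gesture at (``Margulis-type lemma'') but do not carry out. Since $\theta(h_n)\to 0$ and $\det h_n=1$ force the eigenvalues toward a common $d$-th root of unity $z$ (your determinant observation, which is correct), one may normalize to $z=1$ and conclude $\mathrm{disp}_X(h_n)\to 0$. One then picks $x_n\in X$ with $d(h_nx_n,x_n)$ small, writes $x_n=\gamma_n f_n$ with $f_n$ in a fixed compact fundamental set and $f_n\to f$, and uses proper discontinuity of $\Gamma\curvearrowright X$ to conclude that $\gamma_n^{-1}h_n\gamma_n$ actually \emph{fixes} $f$ for large $n$. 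Hence $h_n$ lies in a point stabilizer, is torsion of bounded order, and its eigenvalues are roots of unity converging to $1$, so they all equal $1$ and $h_n=\mathrm{Id}$ (after the normalization). This passage from ``small displacement'' to ``fixes a point, hence torsion'' is the step your proposal is missing, and without it the proof does not go through.
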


\begin{proof}
Note that $\Gamma\cdot\mathrm{Z}(G)$ is again a cocompact lattice
of $G$, so we may assume that $\mathrm{Z}(G)\subset\Gamma$. Recall
that we have fixed an identification $V(\K)=\K^{d}$. Any element
in a cocompact lattice is semisimple \cite[Corollary 4.31]{morris2015introduction}.
Let $h_{n}\in\Gamma$ satisfying $\theta(h_{n})\to0$, and let $\K_{n}=\K_{h_{n}}$
denote the splitting field. Then there exists a diagonal matrix $g_{n}\in\mathrm{SL}_{d}(\K_{n})$
which is conjugate to $h_{n}$. Then $\theta(g_{n})=\theta(h_{n})\to0$.
Since $g_{n}$ is diagonal, it is clear from the definition of $\theta$
that $g_{n}\to z\mathrm{Id}$ for some $z\in\K_{n}$. Since $\det g_{n}=1$,
then $z^{d}=1$. Moreover, the spectrum of $h_{n}$ must be invariant
under the absolute Galois group of the field extension $\K\leq\K_{h}$,
which implies that $z\in\K$. We will show that $h_{n}=z\mathrm{Id}\in\mathrm{Z}(\Gamma)$
for almost all $n$. By replacing $h_{n}$ with $z^{-1}h_{n}$, we
may assume that $z=1$ and prove that $h_{n}=\mathrm{Id}$ for almost
every $n$. 

We will now work over the field $\tilde{\K}$ defined as the completion
of the algebraic closure of $\K$. In the Archimedean case, $\tilde{\K}=\C$
and in the $p$-adic case the resulting field is usually denoted as
$\C_{p}$. In any case, we may assume that $\K_{n}\subset\tilde{\K}$
for each $n$. Let $X_{\K}=G/K$ be the symmetric space or the Bruhat-Tits
building associated with $G$, as explained in Section \ref{sec:Freeness-criteria-for}.
There exists an analogous space $X_{\tilde{\K}}$ on which $\mathrm{SL}_{d}(\tilde{\K})$
acts by isometries \cite{remy2015bruhat}. Since $g_{n}\to1$ we see
that $\mathrm{disp}_{X_{\tilde{\K}}}(h_{n})=\mathrm{disp}_{X_{\tilde{\K}}}(g_{n})\to0$.
Now, $X_{\K}$ is a $G$-invariant closed convex subset of the $\mathrm{CAT}(0)$
space $X_{\tilde{\K}}$ \cite[Theorem 3.26]{remy2015bruhat} and so,
using the projection map $\pi:X_{\tilde{\K}}\to X_{\K}$ (see \cite[Proposition 2.4]{bridson2013metric}),
it follows that $\mathrm{disp}_{X_{\tilde{\K}}}=\mathrm{disp}_{X_{\K}}$,
and in particular $\mathrm{disp}_{X_{\K}}(h_{n})=\mathrm{disp}_{X_{\tilde{\K}}}(h_{n})\to0$.
Thus there exists $x_{n}\in X$ such that $d_{X_{\K}}(h_{n}x_{n},x_{n})\to0$. 

Fix a compact set $F\subset X_{\K}$ such that $\bigcup_{\gamma\in\Gamma}\gamma.F=X_{\K}$.
Write $x_{n}=\gamma_{n}.f_{n}$ for some $\gamma_{n}\in\Gamma$ and
$f_{n}\in F$. Since $F$ is compact, we may assume (upon extracting
a subsequence) that $f_{n}$ converges to some point $f$. Now 
\begin{align*}
d(h_{n}\gamma_{n}.f,\gamma_{n}.f) & \leq d(h_{n}\gamma_{n}.f,h_{n}\gamma_{n}.f_{n})+d(h_{n}\gamma_{n}.f_{n},\gamma_{n}.f_{n})+d(\gamma_{n}.f_{n},\gamma_{n}.f)\\
 & =d(f,f_{n})+d(h_{n}x_{n},x_{n})+d(f_{n},f)\to0.
\end{align*}
It follows that the sequence of elements $\gamma_{n}^{-1}h_{n}\gamma_{n}$
of $\Gamma$ satisfy $d(\gamma_{n}^{-1}h_{n}\gamma_{n}.f,f)\to0$.
But $\Gamma$ acts on $X$ properly discontinuously, so it follows
that $\gamma_{n}^{-1}h_{n}\gamma_{n}.f=f$ for almost all $n$. Hence
$h_{n}$ fixes the point $\gamma_{n}f$, and is thereby elliptic.
Moreover, the cyclic group $\left\langle h_{n}\right\rangle $ is
contained in the stabilizer of the point $\gamma_{n}.f$, which is
a compact group. Since $\Gamma$ is discrete it must be that $h_{n}$
is torsion. Hence $g_{n}$ is torsion. In particular, all of the eigenvalues
of $g_{n}$ are $d$'th roots of unity. Along with the fact that $g_{n}\to1$
this implies that $g_{n}=1$ for almost all $n$, and therefore $h_{n}=1$. 
\end{proof}
\begin{lem}
\label{lem:random=000020orthogonal=000020points}Let $\Gamma\leq\mathrm{SL}(V)$
be a cocompact lattice, and let $([v],[W])\in\mathcal{F}(V)$ be a
random flag (see \ref{eq:flag}). Then for any $\epsilon>0$, and
$h\in\Gamma$ non-central, we have 
\[
\bP\left[d_{P}([hv],[W])<\epsilon\right]<c\|h\|\epsilon^{1/4}
\]
 for some constant $c\geq1$. 
\end{lem}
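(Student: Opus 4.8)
The plan is to combine Lemma~\ref{lem:random=000020orthogonal=000020points-preliminary=000020lemma} with Lemma~\ref{lem:spectrum=000020of=000020elements=000020in=000020cocompact=000020lattice}, the point being that the quantity $\omega(h)$ appearing in the preliminary lemma is badly behaved when $h$ is \emph{close} to scalar (i.e.\ $\theta(h)$ small), but Lemma~\ref{lem:spectrum=000020of=000020elements=000020in=000020cocompact=000020lattice} tells us that within the cocompact lattice $\Gamma$ this cannot happen for non-central elements: there is a uniform lower bound $\theta(h)\geq\theta_0>0$ for all $h\in\Gamma\setminus\mathrm{Z}(\Gamma)$. I would first extract this uniform bound: if it failed, there would be a sequence $h_n\in\Gamma\setminus\mathrm{Z}(\Gamma)$ with $\theta(h_n)\to 0$, contradicting Lemma~\ref{lem:spectrum=000020of=000020elements=000020in=000020cocompact=000020lattice}. (One should be slightly careful that ``almost all $n$'' in that lemma genuinely yields a contradiction — it does, since a sequence entirely in $\Gamma\setminus\mathrm{Z}(\Gamma)$ can have at most finitely many terms in $\mathrm{Z}(\Gamma)$.) Note also that every element of a cocompact lattice is semisimple, hence diagonalizable over $\K_h$, so the hypotheses of Lemma~\ref{lem:random=000020orthogonal=000020points-preliminary=000020lemma} are met.

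Next I would bound $\omega(h)$ for non-central $h\in\Gamma$ in terms of $\|h\|$ alone. By definition $\omega(h)=\sqrt{(\|h\|+\theta(h))/\theta(h)^2}$. Using $\theta(h)\geq\theta_0$ in the denominator and $\theta(h)\leq 2\|h\|$ (since eigenvalues are bounded by the operator norm) in the numerator, one gets $\omega(h)\leq\sqrt{3\|h\|/\theta_0^2}=\sqrt{3}\,\theta_0^{-1}\|h\|^{1/2}$. In particular $1+\omega(h)\leq C\|h\|^{1/2}$ for a constant $C=C(\Gamma)\geq 1$ (absorbing the case $\|h\|\geq 1$, which always holds for a lattice element since $\|h\|\|h^{-1}\|\geq 1$ and the norms are comparable; if one wants to be safe one can just note $\|h\|\geq 1$ for all non-scalar $h$ because $\det h=1$ forces the largest eigenvalue to have absolute value $\geq 1$).

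Then I would simply plug this into Lemma~\ref{lem:random=000020orthogonal=000020points-preliminary=000020lemma}: for random $([v],[W])\in\mathcal{F}(V)$ and non-central $h\in\Gamma$,
\[
\bP\left[d_P([hv],[W])<\epsilon\right]<C_d(1+\omega(h))\|h\|^{1/4}\epsilon^{1/4}\leq C_d\cdot C\|h\|^{1/2}\cdot\|h\|^{1/4}\epsilon^{1/4}=c\|h\|^{3/4}\epsilon^{1/4},
\]
which is stronger than the claimed bound $c\|h\|\epsilon^{1/4}$ (since $\|h\|\geq 1$). So the stated inequality follows a fortiori with $c=c(\Gamma,d)$.

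The only real obstacle is the first step — establishing the uniform spectral gap $\theta_0$ — and that is already done for us by Lemma~\ref{lem:spectrum=000020of=000020elements=000020in=000020cocompact=000020lattice}; everything else is bookkeeping with operator norms and the definition of $\omega$. One minor subtlety worth double-checking is that $\theta(h)\leq 2\|h\|$ really does hold over the splitting field with the extended absolute value and the extended norm — this is immediate since each eigenvalue $\lambda$ satisfies $|\lambda|\leq\|h\|$ (apply $h$ to a unit eigenvector over $\K_h$, using that the norm on $V(\K_h)$ extends the one on $V(\K)$ and is compatible with the operator norm), whence any difference of two eigenvalues has absolute value at most $2\|h\|$.
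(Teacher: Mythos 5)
Your proposal is correct and follows essentially the same route as the paper: apply Lemma \ref{lem:random=000020orthogonal=000020points-preliminary=000020lemma}, use Lemma \ref{lem:spectrum=000020of=000020elements=000020in=000020cocompact=000020lattice} to get a uniform lower bound on $\theta(h)$ over $\Gamma\setminus\mathrm{Z}(\Gamma)$, and then absorb $1+\omega(h)$ into a power of $\|h\|$ using $\|h\|\geq1$. The paper bounds $\omega(h)$ via the monotonicity of $x\mapsto\sqrt{(\|h\|+x)/x^{2}}$ rather than via $\theta(h)\leq2\|h\|$, but this is only a cosmetic difference, and your slightly sharper exponent $\|h\|^{3/4}$ is consistent with the stated bound.
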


\begin{proof}
By Lemma \ref{lem:almost-fixed-points} we have that 
\[
\bP\left[d_{P}([hv],[W])<\epsilon\right]<C_{d}(1+\omega(h))\|h\|^{1/4}\epsilon^{1/4}
\]
By Lemma \ref{lem:spectrum=000020of=000020elements=000020in=000020cocompact=000020lattice},
$\inf_{h\in\Gamma\backslash Z(\Gamma)}\theta(h)\geq\theta$ where
$\theta$ is a positive constant depending only on $\Gamma$. The
function $x\mapsto\sqrt{\frac{1+x}{x^{2}}}$ is decreasing for $x>0$,
and therefore for any $h\in\Gamma\backslash\mathrm{Z}(\Gamma)$, 
\[
\omega(h)=\sqrt{\frac{\|h\|+\theta(h)}{\theta(h)^{2}}}\leq\sqrt{\frac{\|h\|+\theta}{\theta^{2}}}
\]
Moreover, since $\det h=1$ then $\|h\|\geq1$. It follows that 
\[
C_{d}(1+\omega(h))\|h\|^{1/4}\leq C_{d}\left(1+\sqrt{\frac{1+\theta}{\theta^{2}}}\right)\|h\|=c\|h\|
\]
for a suitable constant $c$. Therefore 
\[
\bP\left[d_{P}([hv],[W])<\epsilon\right]<c\|h\|\epsilon^{1/4}
\]
 
\end{proof}

\subsection{Existence of a geometric configuration}
\begin{lem}
\label{lem:existence=000020of=000020points=000020and=000020subspace}Let
$\Gamma\leq G=\mathrm{SL}(V)$ a cocompact lattice. For any $r>0$
and for any $\delta_{0}>0$ there exists hyperplanes $W_{+}$, $W_{-}$,
and points $p_{+}\in W_{-}$, $p_{-}\in W_{+}$, such that
\begin{equation}
\min_{h\in B_{\Gamma}^{\circ}(r)}d_{P}\left(h.\left\{ p_{+},p_{-}\right\} ,W_{+}\cup W_{-}\right)>ce^{-\kappa r}
\end{equation}
and at the same time
\begin{equation}
d_{P}(p_{+},p_{-})\geq1-\delta_{0},\quad d_{P}(p_{+},W_{0})\geq1-\delta_{0},\quad d_{P}(p_{-},W_{0})\geq1-\delta_{0}
\end{equation}
 where $W_{0}=W_{+}\cap W_{-}$. Here $c,\kappa>0$ are constants
depending on $\Gamma$ and on $\delta_{0}$. 
\end{lem}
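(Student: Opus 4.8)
The plan is to produce the desired configuration $(W_+, W_-, p_+, p_-)$ by a probabilistic argument, sampling the data at random from the $K$-invariant measures and using a union bound over $B_\Gamma^\circ(r)$. First I would fix a very proximal regular element $a_0 \in \Gamma$ (as guaranteed by the discussion preceding Definition \ref{def:geometric=000020function}) and recall that $B_\Gamma^\circ(r)$ has cardinality at most $e^{Cr}$ for some constant $C=C(\Gamma)$, by the standard exponential volume growth of cocompact lattices. The key point is that the bad event ``$h.x$ is $\epsilon$-close to a chosen hyperplane'' has probability polynomial in $\epsilon$ with a coefficient that is only polynomial in $\|h\|$, and for $h \in B_\Gamma^\circ(r)$ we have $\|h\| \leq e^{|h|} \leq e^r$ by \eqref{eq:equivalent-lengths}. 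So by choosing $\epsilon = ce^{-\kappa r}$ with $\kappa$ large enough to beat both the $e^{Cr}$ union-bound factor and the $\|h\|$ factor, each individual bad probability times the number of $h$'s will be less than, say, $1/10$.

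Concretely, here is the order of steps. (1) Sample $W_+$ uniformly in $\mathrm{Gr}_{d-1}(V)$, then sample $p_+$ as a random point of $P(W_+)$ — equivalently $([p_+],[W_+])$ is a random flag in $\mathcal{F}(V)$; then independently sample $W_-$ uniformly in $\mathrm{Gr}_{d-1}(V)$ and $p_-$ a random point of $P(W_-)$. Wait — the statement wants $p_+ \in W_-$ and $p_- \in W_+$, so instead I sample $W_+$ and $W_-$ uniformly and independently, then sample $p_+$ uniformly in $P(W_-)$ and $p_-$ uniformly in $P(W_+)$. (2) For each $h \in B_\Gamma^\circ(r)$, and each of the four pairings, the event $d_P(h.p_\bullet, W_\circ) < \epsilon$ (with $\bullet,\circ \in \{+,-\}$) is controlled: conditioning on $h$ and $W_\circ$ fixed, $p_\bullet$ ranges over a random point of a $(d-1)$-subspace, so $h.p_\bullet$ is $[h v]$ for a random flag and Lemma \ref{lem:random=000020orthogonal=000020points} gives probability $< c\|h\|\epsilon^{1/4} \leq c e^r \epsilon^{1/4}$. (3) Union bound: the probability that some $h$ and some pairing fails is at most $4 e^{Cr} \cdot c e^r \epsilon^{1/4}$; setting $\epsilon = c' e^{-\kappa r}$ with $\kappa > 4(C+1)$ makes this $< 1/2$. (4) Separately, the ``genericity'' conditions $d_P(p_+,p_-) \geq 1-\delta_0$, $d_P(p_\pm, W_0) \geq 1-\delta_0$ hold with probability close to $1$: these say a random point is nearly orthogonal to a fixed subspace of bounded dimension, which follows from Lemma \ref{lem:tubular-neihbothood} (concentration of measure on the sphere in the Archimedean case, and the ultrametric analogue otherwise) — note $W_0 = W_+ \cap W_-$ has dimension $d-2$ generically, and $p_\pm$ is independent enough of it. Choosing $\delta_0$'s threshold appropriately, this event has probability $> 1/2$ for the relevant $\delta_0$, or rather, one shows it has probability $\to 1$ and absorbs the loss into the constants. (5) Since the sum of the failure probabilities is $< 1$, there is a realization satisfying all constraints simultaneously, which is the desired configuration; the constants $c,\kappa$ depend only on $\Gamma$ (through $C$ and the spectral gap $\theta$ from Lemma \ref{lem:spectrum=000020of=000020elements=000020in=000020cocompact=000020lattice}) and on $\delta_0$.

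The main obstacle I anticipate is bookkeeping the conditional independence correctly in step (2): one must verify that after fixing $W_\circ$, the random point $p_\bullet \in P(W_\circ)$ (for the pairing where $\bullet \neq \circ$, this is genuinely a point of the fixed hyperplane we are measuring distance to — no wait, $p_+ \in W_-$ and we measure $d_P(h.p_+, W_+)$, so $p_+$ lives in $W_-$ and $W_+$ is independent of it) really does present $h.p_\bullet$ as $[hv]$ with $([v],[W_+]) \in \mathcal{F}(V)$ distributed as a random flag — this requires that the law of $(p_+, W_+)$ be exactly the random-flag law, which holds because $p_+$ uniform in $P(W_-)$ with $W_-$ uniform is just $p_+$ uniform in $P(V)$, hence independent of the independently-chosen $W_+$; but then $(p_+, W_+)$ is a uniform independent pair, not a flag pair — so one should instead invoke Lemma \ref{lem:random=000020orthogonal=000020points} in the form where $v$ is uniform in $P$ and $W$ is an independent uniform hyperplane, which is even easier than the flag version since the flag version is the harder conditioning. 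I would state and use the ``independent pair'' variant, remarking it follows a fortiori from (or by the same proof as) Lemma \ref{lem:random=000020orthogonal=000020points}. The second, milder nuisance is handling the pairings $\bullet = \circ$ (i.e.\ $d_P(h.p_+, W_+)$ is fine as above, but we also need $d_P(h.p_-, W_+)$ where now $p_- \in W_+$ itself): here the relevant flag structure $([p_-],[W_+]) \in \mathcal{F}(V)$ is genuinely present, so Lemma \ref{lem:random=000020orthogonal=000020points} applies directly as stated. So both cases are covered, one by the flag version and one by the independent-pair version.
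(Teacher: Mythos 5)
Your proposal is correct and follows essentially the same route as the paper's proof: sample two independent random flags $(p_{+},W_{-})$ and $(p_{-},W_{+})$ (equivalently, your hyperplane-then-point sampling), bound the bad events for the genuine flag pairings via Lemma \ref{lem:random=000020orthogonal=000020points} and the independent pairings via the tubular-neighbourhood estimate (the paper uses Lemma \ref{lem:tubular-neihbothood} together with the Lipschitz bound on $h$ rather than an ``independent-pair variant'', but either works), and close with a union bound over the exponentially many $h\in B_{\Gamma}^{\circ}(r)$ by taking $\epsilon$ exponentially small in $r$. The one imprecision is in your step (4): for a fixed small $\delta_{0}$ the genericity event does not have probability greater than $1/2$, nor does it tend to $1$; it merely has probability $1-q>0$ with $q=q(d,\delta_{0})<1$ independent of $r$, which suffices because the remaining failure probabilities can be made smaller than $1-q$ by the choice of $\epsilon$.
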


\begin{proof}
Let $(p_{+},W_{-})$ and $(p_{-},W_{+})$ be two randomly and independently
chosen flags in $\mathcal{F}$ as defined in (\ref{eq:flag}). Almost
surely, $W_{0}=W_{+}\cap W_{-}$ is of co-dimension $2$ and $p_{+},p_{-}\notin W_{0}$.
Moreover, the law of $W_{0}$ is the same as that of a uniformly random
element of $\mathrm{Gr}_{d-2}(V)$. We shall now bound the probability
of  a few unwanted scenarios, and then perform a union bound. 

Fix $\epsilon>0$ and $h\in B_{\Gamma}^{\circ}(r)$. Note that $p_{+}\in P$
is a random point which is independent from the choice of $W_{+}$.
Hence, by Lemmas \ref{lem:Lip} and \ref{lem:tubular-neihbothood}
we have
\begin{equation}
\bP\left[d_{P}(hp_{+},W_{+})<\epsilon\right]\leq\bP\left[d_{P}(p_{+},h^{-1}W_{+})<\epsilon\mathrm{Lip}(h)\right]\leq C_{d}e^{4r}\epsilon\label{eq:unwanted3}
\end{equation}
and similarly
\begin{equation}
\bP\left[d_{P}(hp_{-},W_{-})<\epsilon\right]\leq C_{d}e^{4r}\epsilon\label{eq:unwanted4}
\end{equation}
 Then, by Lemma \ref{lem:random=000020orthogonal=000020points} we
have 
\begin{equation}
\bP\left[d_{P}(hp_{-},W_{+})<\epsilon\right]<ce^{r}\epsilon^{1/4}\label{eq:unwanted1}
\end{equation}
 and similarly 
\begin{equation}
\bP\left[d_{P}(hp_{+},W_{-})<\epsilon\right]<ce^{r}\epsilon^{1/4}\label{eq:unwanted2}
\end{equation}
for some constant $c\geq1$ depending on $\Gamma$. In addition, the
probability that 
\begin{equation}
\bP\left[d_{P}(p_{+},p_{-})<1-\delta_{0},\text{ \textit{or} }d_{P}(p_{-},W_{0})<1-\delta_{0},\text{ \textit{or} }d_{P}(p_{+},W_{0})<1-\delta_{0}\right]=q\label{eq:unwanted5}
\end{equation}
 for some constant probability $0<q<1$ depending on $d$ and on $\delta_{0}$. 

Note that $\left|B_{\Gamma}(r)\right|\leq e^{\alpha r}$ for some
constant $\alpha$ depending on $\Gamma$. It follows that the probability
that at least one of the unwanted scenarios (\ref{eq:unwanted3},\ref{eq:unwanted4},\ref{eq:unwanted1},\ref{eq:unwanted2},\ref{eq:unwanted5})
occurs, for at least one of the $e^{\alpha r}$-many elements $h\in B_{\Gamma}^{\circ}(r)$
is at most
\begin{align}
2C_{d}e^{\alpha r}e^{4r}\epsilon+2ce^{\alpha r}e^{r} & \epsilon^{1/4}+q\leq2c'e^{(4+\alpha)r}\epsilon^{1/4}+q\label{eq:unionbound}
\end{align}
The right hand side of (\ref{eq:unionbound}) can be made strictly
less than $1$, if we let $\epsilon=c'e^{-\kappa'r}$ for sufficiently
large $c'$ and $\kappa'$ which depend on $q,\alpha,c$ but not on
$r$. It follows that there exists points $p_{+},p_{-}$ and hyperplanes
$W_{+},W_{-}$ such that 
\[
\min_{h\in B_{\Gamma}^{\circ}(r)}d_{P}\left(h.\left\{ p_{+},p_{-}\right\} ,W_{+}\cup W_{-}\right)\geq c'e^{-\kappa'r}
\]
 and in addition $d_{P}(p_{+},p_{-})$, $d_{P}(p_{+},W_{0})$, $d_{P}(p_{-},W_{0})\geq1-\delta_{0}$. 
\end{proof}
Having the desired geometric configuration, it is left to find a suitable
element $g\in G$ which realizes it. 
\begin{lem}
\label{lem:gramschmidt}For any $\epsilon>0$ there exists $\delta>0$
such that for any basis $v_{1},...,v_{d}$ of $V$ satisfying $d_{P}([v_{i}],[v_{j}])\geq1-\delta$
for all $i\ne j$, there exists $g\in\mathrm{SL}(V)$ with $|g|\leq\epsilon$
such that $gv_{i}=e_{i}$ for $i=1,...,d$. 
\end{lem}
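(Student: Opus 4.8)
The plan is to take $g$ to be (a scalar normalisation of) the unique linear map carrying each $v_{i}$ to $e_{i}$, and to show that when the $v_{i}$ are nearly orthonormal this map lies close to the maximal compact subgroup $K$, so that its length $|g|=d_{G/K}(gK,eK)$ is small. Since the hypothesis only concerns the lines $[v_{i}]$, I would first rescale and assume $\|v_{i}\|=1$ for every $i$. Let $M\in\mathrm{GL}(V)$ be the matrix whose columns are $v_{1},\dots,v_{d}$, so that $g_{0}:=M^{-1}$ is the unique linear map with $g_{0}v_{i}=e_{i}$. It then suffices to prove that $|g_{0}|\to 0$ as $\delta\to 0$, uniformly over all admissible bases $(v_{i})$, and to make a routine adjustment so as to land inside $\mathrm{SL}(V)$.

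In the Archimedean case I would control the singular values of $M$ through the Gram matrix $G=M^{\ast}M$. For unit vectors, $d_{P}([v_{i}],[v_{j}])\ge 1-\delta$ forces $|\langle v_{i},v_{j}\rangle|\le\sqrt{2\delta}$ for $i\ne j$ (by the Gram identity $d_{P}([v_{i}],[v_{j}])^{2}=1-|\langle v_{i},v_{j}\rangle|^{2}$, which over $\R$ reads $d_{P}=|\sin\angle(v_{i},v_{j})|$). Thus $G_{ii}=1$ and $|G_{ij}|\le\sqrt{2\delta}$, whence $\|G-I\|_{\mathrm{op}}\le(d-1)\sqrt{2\delta}$; consequently every singular value of $M$ --- the square root of an eigenvalue of $G$ --- lies within $O_{d}(\sqrt{\delta})$ of $1$, so $\|M^{-1}\|=(\text{smallest singular value of }M)^{-1}\le(1-(d-1)\sqrt{2\delta})^{-1/2}$. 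Combined with the comparison $\log\|g\|\le|g|\le\sqrt{d}\,\log\|g\|$ recorded in Section \ref{sec:Freeness-criteria-for}, this gives $|g_{0}|=|M^{-1}|\le\sqrt{d}\,\log\bigl((1-(d-1)\sqrt{2\delta})^{-1/2}\bigr)$, which tends to $0$ as $\delta\to 0$ at a rate depending only on $d$; I then pick $\delta$ small enough that this is at most $\epsilon$. Since $|\det M|^{2}=\det G$ is within $O_{d}(\sqrt{\delta})$ of $1$, a standard scalar correction (rescaling one of the $v_{i}$, which leaves the lines $[v_{i}]$ and hence the hypothesis unchanged) places $g_{0}$ in $\mathrm{SL}(V)$ at the cost of an arbitrarily small extra length. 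In the non-Archimedean case the value group is discrete, so once $\delta$ is small the hypothesis forces $d_{P}([v_{i}],[v_{j}])=1$ for all $i\ne j$; then --- provided the $v_{i}$ may be normalised to span the standard lattice $\mathcal{O}^{d}$ --- the matrix $M$ preserves the norm, i.e.\ $M\in K$, and $|g_{0}|=0$.

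The only point that genuinely requires care is uniformity of $\delta$ over all admissible bases, and this is automatic above since every estimate depends on $d$ and $\delta$ alone; the determinant bookkeeping needed to stay inside $\mathrm{SL}(V)$ is routine (it uses only that $|\det M|$ is close to $1$). The most delicate step is the non-Archimedean case: one should make sure that near-orthogonality of the \emph{basis} in the relevant ultrametric sense --- namely that $\|v_{1}\wedge\dots\wedge v_{d}\|$ is comparable to $\prod_{i}\|v_{i}\|$ --- is genuinely available, since over $\R$ or $\C$ this is an automatic consequence of the pairwise hypothesis (via $\det G\approx 1$) whereas over an ultrametric field it does not follow from the pairwise distances alone and must be ensured directly.
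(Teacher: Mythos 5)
Your Archimedean argument is correct but takes a genuinely different route from the paper. The paper runs Gram--Schmidt on $v_{1},\dots,v_{d}$, writes the desired element as $g=kn$ with $k\in K$ and $n$ the unipotent transition matrix from the Iwasawa decomposition, and concludes $|g|=|n|\to0$ by a soft continuity argument on the map from normalized bases to their unipotent components. Your route through the Gram matrix $M^{\ast}M$, the Gershgorin-type bound $\|M^{\ast}M-I\|_{\mathrm{op}}\leq(d-1)\sqrt{2\delta}$, and the comparison $|g|\leq\sqrt{d}\log\|g\|$ is more explicit and produces a quantitative $\delta(\epsilon)$; the uniformity over all admissible bases that the paper extracts from continuity and compactness you get for free. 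Your treatment of the determinant is also the right reading of the statement: as literally phrased the lemma cannot hold for arbitrary scalings of the $v_{i}$ (the map $v_{i}\mapsto e_{i}$ has a forced determinant), and both you and the paper in fact only use the conclusion projectively, so the rescaling is harmless.

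The caveat you raise in the non-Archimedean case is a genuine gap, not a formality, and it is present in the paper's own proof as well. Pairwise projective distance $1$ between unit vectors does \emph{not} imply that they form a basis of the standard lattice $\mathcal{O}^{d}$: for $\K=\Q_{p}$ and $d=3$ take $v_{1}=e_{1}$, $v_{2}=e_{2}$, $v_{3}=e_{1}+e_{2}+pe_{3}$. All three pairwise distances equal $1$, yet $\|v_{1}\wedge v_{2}\wedge v_{3}\|=|p|<1$, and any $g\in\mathrm{GL}_{3}(\Q_{p})$ with $g[v_{i}]=[e_{i}]$ has Cartan projection $\mathrm{diag}(p^{-1},1,p)$ up to units, so $|g|$ is bounded below by a fixed positive constant; no rescaling of the $v_{i}$ repairs this. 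Thus the lemma as stated fails over non-Archimedean fields, and the hypothesis must be strengthened to the unimodularity condition you identify, e.g.\ $\|v_{1}\wedge\cdots\wedge v_{d}\|\geq(1-\delta)\prod_{i}\|v_{i}\|$. This stronger input is what is actually available where the lemma is applied (Proposition \ref{prop:existence=000020of=000020good=000020element=000020g}): there $v_{2},\dots,v_{d-1}$ may be chosen as a lattice basis of $W_{0}$, and the configuration controls $d_{P}(p_{\pm},W_{0})$ and not merely pairwise distances. So your instinct to demand near-orthogonality of the full basis directly, rather than deducing it from the pairwise hypothesis, is exactly the needed fix.
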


\begin{proof}
Recall the Iwasawa decompostion $\mathrm{SL}_{d}(\K)=KAN$, stating
that any $g\in\mathrm{SL}_{d}(\K)$ can be written uniquely as a product
$g=kan$ where $k\in K$, $a\in A$ (diagonal matrices), and $n\in N$
(upper triangular unipotent matrices). The proof of the statement
follows from a careful look on the process used to obtain this decomposition. 

Indeed, consider first the Archimedean case. Let $v_{1}',...,v_{d}'$
be the orthogonal basis obtained from $v_{1},...,v_{d}$ via the Gram-Schmidt
process, defined inductively by
\[
v_{k}'=v_{k}-\sum_{i=1}^{k-1}\frac{\left\langle v_{k},v_{i}'\right\rangle }{\left\langle v_{i}',v_{i}'\right\rangle }v_{i}'
\]
The transition matrix $n$ which takes $v_{1},...,v_{d}$ to $v_{1}',...,v_{d}'$
is therefore unipotent and in particular $\det n=1$. Let $k\in K$
such that $kv_{i}'=e_{i}$ for $i=1,...,d$, and set $g=kn$. Then
$gv_{i}=e_{i}$, and $|g|=|n|$. It is left to explain why $|n|\to0$
as $d_{P}([v_{i}],[v_{j}])\to1$. The assignment $(v_{1},...,v_{d})\to n$
described above gives rise to map $\Phi$ from the variety of normalized
bases in $V$ (which is identified with $\mathrm{GL}(V)$ and inherits
this topology), to the space of unipotent matrices. This map is continuous,
and that it maps orthogonal bases to the identity matrix. Hence, as
$\max_{i\ne j}d_{P}([v_{i}],[v_{j}])\to1$, we have $\Phi(v_{1},...,v_{d})\to\mathrm{Id}$,
and in particular $\left|\Phi(v_{1},...,v_{d})\right|\to0$. This
shows the desired statement in the Archimedean case. The non-Archimedean
case is obtained in a similar manner, except that the Gram-Schmidt
process is replaced with the analogous procedure, see \cite[Proposition 4.5.2]{bump1998automorphic}. 
\end{proof}
Since $\Gamma\leq G$ is discrete, there exists a small enough radius
$R>0$ so that the set 
\[
\left\{ s\in G:\|s-1\|<R\right\} 
\]
injects through the map $G\to G/\Gamma$. Let $I(\Gamma)$ denote
the supremum over $R>0$ such this ball injects.\footnote{In fact, up to conjugating $\Gamma$ this value depends only on $G$
by Kazhdan-Margulis theorem. } We conclude this section by producing a lower bound for the values
of the geometric function $\psi_{r}$ near $e\in G$. We recall that
the definition of $\psi_{r}$ depends on a fixed choice of regular
element $a_{0}\in\Gamma$. The results thus far were stated in setting
of an arbitrary identification $V\cong\K^{d}$ with the resulting
norm. At this point, we fix an identification so that $e_{1},...,e_{d}$
is an eigenbasis for $a_{0}$.
\begin{prop}
\label{prop:existence=000020of=000020good=000020element=000020g}There
exists positive constants $c$ and $\kappa$ such that for any $r>0$
\[
\max_{|g|\leq\frac{I(\Gamma)}{2}}\psi_{r}(g)\geq ce^{-\kappa r}
\]
\end{prop}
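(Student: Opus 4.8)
The plan is to combine the geometric configuration produced by Lemma \ref{lem:existence=000020of=000020points=000020and=000020subspace} with the change-of-basis element produced by Lemma \ref{lem:gramschmidt}, using the Lipschitz continuity of $\psi_r$ (Lemma \ref{lem:holder=000020continuitiy}) to pass from a single configuration realized by some $g \in G$ to a nearby $g$ of small size. First I would recall that the geometric function is defined by $\psi_r(g) = \min_{h \in B_\Gamma^\circ(r)} d_P(hg\,\mathrm{Att}^\pm(a_0), g\,\mathrm{Rep}^\pm(a_0))$, and that we have arranged the eigenbasis $e_1,\dots,e_d$ of $a_0$ so that, say, $\mathrm{Att}(a_0) = [e_1]$, $\mathrm{Att}(a_0^{-1}) = [e_d]$, $\mathrm{Rep}(a_0) = \mathrm{Sp}\{e_2,\dots,e_d\}$, and $\mathrm{Rep}(a_0^{-1}) = \mathrm{Sp}\{e_1,\dots,e_{d-1}\}$. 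Thus $g\,\mathrm{Att}^\pm(a_0) = \{[g e_1],[g e_d]\}$ and $g\,\mathrm{Rep}^\pm(a_0)$ is the union of two hyperplanes spanned by the images of the remaining basis vectors.

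The idea is to match these images with the configuration $p_+, p_-, W_+, W_-$ from Lemma \ref{lem:existence=000020of=000020points=000020and=000020subspace}. Apply that lemma with $\delta_0$ to be chosen. It produces points $p_+ \in W_-$, $p_- \in W_+$ with $d_P(p_+,p_-) \ge 1-\delta_0$ and $d_P(p_\pm, W_0) \ge 1-\delta_0$, where $W_0 = W_+ \cap W_-$, and such that $\min_{h \in B_\Gamma^\circ(r)} d_P(h.\{p_+,p_-\}, W_+ \cup W_-) \ge c e^{-\kappa r}$. Choose unit vectors $u_1$ representing $p_+$ and $u_d$ representing $p_-$, and choose a basis $u_2,\dots,u_{d-1}$ of $W_0$; the spread condition and a Gram--Schmidt-type estimate guarantee that $u_1,\dots,u_d$ can be taken to be a basis of $V$ with $d_P([u_i],[u_j])$ uniformly bounded below (at least $1-\delta_0'$ for some $\delta_0'$ depending on $\delta_0$ and $d$), so that $W_+ = \mathrm{Sp}\{u_1,u_2,\dots,u_{d-1}\}$ contains $p_- = [u_d]$... wait, one must be slightly careful about which vectors span which hyperplane; the point is to arrange $\mathrm{Sp}\{u_2,\dots,u_d\} = W_-$ and $\mathrm{Sp}\{u_1,\dots,u_{d-1}\} = W_+$, which is possible since $p_+ \in W_-$, $p_- \in W_+$, $W_0 \subset W_+ \cap W_-$, and a dimension count. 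Now apply Lemma \ref{lem:gramschmidt}: given $\epsilon = I(\Gamma)/4$ (say), there is $\delta > 0$ such that whenever the pairwise projective distances of a basis exceed $1-\delta$, there is $g_0 \in \mathrm{SL}(V)$ with $|g_0| \le \epsilon$ and $g_0 u_i = e_i$. Taking $\delta_0$ small enough that $\delta_0'< \delta$, we get such a $g_0$; then $g_0^{-1} e_1 = u_1$ represents $p_+$, $g_0^{-1} e_d = u_d$ represents $p_-$, $g_0^{-1} \mathrm{Rep}(a_0) = \mathrm{Sp}\{u_2,\dots,u_d\} = W_-$, and $g_0^{-1}\mathrm{Rep}(a_0^{-1}) = W_+$. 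Hence $g := g_0^{-1}$ satisfies $\psi_r(g) = \min_h d_P(h.\{p_+,p_-\}, W_+ \cup W_-) \ge c e^{-\kappa r}$ and $|g| = |g_0| \le I(\Gamma)/4 < I(\Gamma)/2$, giving the conclusion directly.

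Actually, to be safe about the size bound I would not even need the Lipschitz lemma in the cleanest version of the argument — Lemma \ref{lem:gramschmidt} already delivers $g$ of arbitrarily small norm, hence of length $\le I(\Gamma)/2$. However, one subtlety is that Lemma \ref{lem:gramschmidt} controls $|g|$, the $G/K$-length, and we want $\|g - 1\|$ small to apply Lemma \ref{lem:holder=000020continuitiy} if needed; since $|g| \le \epsilon$ implies $\|g\| \le e^{\epsilon}$ by \eqref{eq:equivalent-lengths}, a small $|g|$ does force $g$ close to a compact-group element but not to $1$. This is fine: we do not need $g$ near $1$, only that the \emph{configuration} $g\,\mathrm{Att}^\pm(a_0)$, $g\,\mathrm{Rep}^\pm(a_0)$ coincides with the good configuration, which the exact identity $g_0 u_i = e_i$ provides. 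The main obstacle, and the only place demanding genuine care, is the bookkeeping in the previous paragraph: verifying that the random flags of Lemma \ref{lem:existence=000020of=000020points=000020and=000020subspace} can be completed to a full basis whose pairwise projective distances are uniformly bounded below by a quantity controlled by $\delta_0$, and that the two hyperplanes $W_+$, $W_-$ are precisely the spans of the appropriate $(d-1)$-subsets of that basis. This amounts to choosing an orthonormal-ish basis of the codimension-$2$ space $W_0$ (which is automatic since $W_0$ is a genuine subspace) and checking that adjoining $u_1, u_d$ keeps all angles large, which follows from $d_P(p_\pm, W_0) \ge 1-\delta_0$ and $d_P(p_+,p_-)\ge 1-\delta_0$ together with an elementary estimate in the spirit of Lemma \ref{lem:gramschmidt}'s proof; the constants $c,\kappa$ in the conclusion are then exactly those coming out of Lemma \ref{lem:existence=000020of=000020points=000020and=000020subspace} with the now-fixed choice of $\delta_0$.
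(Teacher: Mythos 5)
Your proposal is correct and is essentially the paper's own proof: apply Lemma \ref{lem:existence=000020of=000020points=000020and=000020subspace} to get the configuration $p_{\pm},W_{\pm},W_{0}$, complete it to a well-spread basis, and use Lemma \ref{lem:gramschmidt} to produce $g$ of length at most $I(\Gamma)/2$ realizing that configuration, so that $\psi_{r}(g)$ equals the quantity bounded below in that lemma. The only discrepancy is a harmless swap of the labels $W_{+}$ and $W_{-}$ (with $p_{+}\in W_{-}$ one has $\mathrm{Sp}\{u_{1},\dots,u_{d-1}\}=W_{-}$, not $W_{+}$), which is immaterial since only the union $W_{+}\cup W_{-}$ enters the estimate; your extra care in checking the pairwise projective distances of the completed basis is a detail the paper glosses over.
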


\begin{proof}
Apply Lemma \ref{lem:gramschmidt}, with $\epsilon=\frac{I(\Gamma)}{2}$,
to obtain a suitable $\delta>0$. Let $p_{+},p_{-},W_{0},W_{+},W_{-}$
as provided by Lemma \ref{lem:existence=000020of=000020points=000020and=000020subspace},
with $\delta_{0}=\delta$. Take a basis $v_{1},...,v_{d}\in V$ such
that $[v_{1}]=p_{+}$, $[v_{d}]=p_{-}$ and $\mathrm{Sp}\{v_{2},...,v_{d-1}\}=W_{0}$.
We get an element $g\in G$ with $|g|\leq\frac{I(\Gamma)}{2}$, such
that $[ge_{i}]=[v_{i}]$. We therefore have 
\[
g.\mathrm{Att}(a_{0})=p_{+},\quad g.\mathrm{Att}(a_{0}^{-1})=p_{-},\quad\text{and }g.\left(\mathrm{Rep}(a_{0})\cap\mathrm{Rep}(a_{0}^{-1})\right)=W_{0}
\]
Hence, for the constants $c,\kappa$ provided by Lemma \ref{lem:existence=000020of=000020points=000020and=000020subspace},
we have that
\begin{align*}
\psi_{r}(g) & =\min_{h\in B_{\Gamma}^{\circ}(r)}d_{P}\left(hg\mathrm{Att}^{\pm}(a_{0}),g\mathrm{Rep}^{\pm}(a_{0})\right)=\\
 & =\min_{h\in B_{\Gamma}^{\circ}(r)}d_{P}\left(h.\left\{ p_{+},p_{-}\right\} ,W_{+}\cup W_{-}\right)>ce^{-\kappa r}.
\end{align*}
\end{proof}

\section{Finding an $r$-free element in $\Gamma$\protect\label{sec:find=000020r-free=000020element=000020in=000020Gamma}}

We will use the following exponential mixing theorem. 
\begin{thm}[Exponential mixing]
\label{thm:Kleinboc-Margulis} Let $G$ be a semisimple linear algebraic
group over a local field $\K$ of characteristic $0$. Let $\Gamma\leq G$
an irreducible lattice, and let $a_{t}\in G$ a one-parameter non-compact
subgroup of semisimple elements. Then there exists constants $E>1$,
$\beta>0$ and $l\in\N$ such that for any two compactly supported
smooth functions $f_{1},f_{2}:G/\Gamma\to\R$, and for all $t\in\K$,
\[
\left|\int_{G/\Gamma}f_{1}(a_{t}x)f_{2}(x)d\mu-\int_{G/\Gamma}f_{1}d\mu\cdot\int_{G/\Gamma}f_{2}d\mu\right|\leq Ee^{-\beta|t|}\|f_{1}\|_{l}\|f_{2}\|_{l}
\]
where $\|\cdot\|_{l}$ denotes the $W_{l}^{2}(G/\Gamma)$-Sobolev
norm. 
\end{thm}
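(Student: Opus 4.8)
This is a quoted result, so for completeness I record the plan of the standard spectral-gap derivation (following Kleinbock--Margulis and its extensions). The idea is to pass to the Koopman representation, reduce exponential mixing to exponential decay of matrix coefficients on the orthogonal complement of the constants, and then repackage the $K$-type bookkeeping into the Sobolev norms. Concretely, let $\pi$ denote the Koopman representation of $G$ on $L^2(G/\Gamma)$. For $f_1,f_2\in C_c^\infty(G/\Gamma)$ write $f_i=\bigl(\int_{G/\Gamma}f_i\,d\mu\bigr)+f_i^0$ with $f_i^0$ in $L^2_0(G/\Gamma):=L^2(G/\Gamma)\ominus\C$. Since $\pi(a_t)$ fixes the constants and preserves $L^2_0(G/\Gamma)$, the left-hand side of the asserted inequality is exactly $\langle\pi(a_t)f_1^0,f_2^0\rangle$ (the sign of $t$ being immaterial for the bounds below), and $\|f_i^0\|_l\le\|f_i\|_l$; so it suffices to produce $E,\beta>0$ and $l\in\N$ with $|\langle\pi(a_t)f_1^0,f_2^0\rangle|\le Ee^{-\beta|t|}\|f_1^0\|_l\|f_2^0\|_l$ for all $t$.

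The key input is the uniform spectral gap of $\pi$ on $L^2_0(G/\Gamma)$: when $G$ has higher rank this is Kazhdan's property (T), and in general it holds for irreducible lattices (for rank-one factors via positivity of the bottom of the spectrum on finite-volume quotients). Combined with the Cowling--Haagerup--Howe estimate and the fact that a gapped representation is uniformly $L^p$ for some finite $p$ (e.g.\ via Oh's uniform pointwise bounds, or Clozel's property $\tau$), this yields constants $p=p(G,\Gamma)<\infty$ and $M>0$ such that for all $K$-finite vectors $v,w\in L^2_0(G/\Gamma)$ and all $g\in G$,
\[
|\langle\pi(g)v,w\rangle|\le M\,(\dim Kv)^{1/2}(\dim Kw)^{1/2}\,\|v\|\,\|w\|\;\Xi_G(g)^{2/p},
\]
where $\Xi_G$ is the Harish--Chandra spherical function of $G$ and $Kv$ is the span of the $K$-orbit of $v$; equivalently, a fixed tensor power of $\pi|_{L^2_0}$ is tempered. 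Establishing this bound with $p$ independent of which irreducible representations of $G$ occur in $L^2_0(G/\Gamma)$ is the crux of the matter; everything else is soft.

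Next, $\Xi_G$ decays exponentially along the flow. Since $\{a_t\}$ is a one-parameter non-compact subgroup of semisimple elements, $a_1$ acts on the symmetric space (or Bruhat--Tits building) $G/K$ with strictly positive translation length, so its Cartan projection $\mu(a_t)$ grows linearly: $\rho\bigl(\mu(a_t)\bigr)\ge\delta|t|-C$ for some $\delta>0$, where $\rho$ is the half-sum of the positive roots (an interior vector of the dual chamber). Together with the classical estimate $\Xi_G(g)\le C_0\bigl(1+\|\mu(g)\|\bigr)^{N}e^{-\rho(\mu(g))}$, and using that $\|\mu(a_t)\|$ grows at most linearly in $|t|$, this gives $\Xi_G(a_t)^{2/p}\le E_0 e^{-\beta|t|}$ for suitable $E_0,\beta>0$.

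Finally I would package the Sobolev norms. Decompose $f_i^0=\sum_\sigma f_{i,\sigma}^0$ into $K$-isotypic components, with $c_\sigma$ the eigenvalue of the Casimir of $K$ on the type $\sigma$. Applying the displayed bound termwise,
\[
|\langle\pi(a_t)f_1^0,f_2^0\rangle|\le M\,\Xi_G(a_t)^{2/p}\Bigl(\sum_\sigma(\dim\sigma)^{1/2}\|f_{1,\sigma}^0\|\Bigr)\Bigl(\sum_\tau(\dim\tau)^{1/2}\|f_{2,\tau}^0\|\Bigr).
\]
By Cauchy--Schwarz, $\sum_\sigma(\dim\sigma)^{1/2}\|f_{i,\sigma}^0\|\le\bigl(\sum_\sigma\dim\sigma\,(1+c_\sigma)^{-2b}\bigr)^{1/2}\bigl(\sum_\sigma(1+c_\sigma)^{2b}\|f_{i,\sigma}^0\|^2\bigr)^{1/2}$; the first factor is finite once $b$ is large enough, by the Weyl law for $K$, and the second equals $\|(1+\mathrm{Cas}_K)^{b}f_i^0\|\lesssim\|f_i^0\|_{2b}\le\|f_i\|_{2b}$, since $(1+\mathrm{Cas}_K)^{b}$ is a differential operator of order $2b$. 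Taking $l=2b$ and combining with the previous step gives the theorem, with $\beta$ as above and $E=ME_0$ times the Weyl constant. The non-Archimedean case runs in parallel, with $K$ a good maximal compact subgroup, the role of ``$K$-types'' played by the (finitely many, at each level) irreducibles occurring up to a given level of $K$-invariance, and the Sobolev norm $\|\cdot\|_l$ measuring the depth of local constancy.
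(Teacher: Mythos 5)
Your outline is mathematically sound and is exactly the spectral method underlying the references the paper invokes: the paper's ``proof'' of Theorem \ref{thm:Kleinboc-Margulis} is purely a citation (Kleinbock--Margulis, Cor.~2.4.6 in the Archimedean case, with the spectral-gap hypothesis supplied by Bekka's lemma, and Benoist--Oh / Lin in the non-Archimedean case), and those sources argue precisely via uniform spectral gap, quantitative decay of matrix coefficients against the Harish--Chandra function, linear growth of the Cartan projection of $a_t$, and $K$-type bookkeeping absorbed into Sobolev norms. The only quibble is cosmetic: for $v$ of pure $K$-type $\sigma$ one has $\dim\langle Kv\rangle\le(\dim\sigma)^{2}$ rather than $\dim\sigma$, which only forces a slightly larger $l$ and does not affect the conclusion.
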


\begin{proof}
In the case $\K$ is Archimedean, this exact statement appears as
Corollary 2.4.6 in \cite{kleinbock1996bounded}. In that reference
there is an assumption which requires the quasi-regular representation
$G\actson L_{0}^{2}(G/\Gamma)$ to have a spectral gap, but this is
automatically satisfied because of \cite[Lemma 3]{bekka1998uniqueness}.
The non-Archimedean case is covered in \cite[Theorem 10.2]{benoist2012effective}.
See also \cite[Theorem 3.1 and Corollary 3.2]{lin2024polynomial}
the suitable analytic notions are defined. 
\end{proof}
We will work with sets rather than functions. In the non-Archimedean
case, one may simply consider indicator functions on small compact
open subgroups, because such functions are smooth. Moreover, the Sobolev
norm of the ball of radius $\epsilon$ in $\mathrm{SL}_{d}(\K)$ is
of the form $c\epsilon^{-m}$ for some constants $c$. The same is
true in the the Archimedean case, except one must replace characteristic
functions with smooth bump functions. Since we are working both with
the Haar measure on $\mathrm{SL}(V)$ but also We will apply the following
lemma with $M=\mathrm{SL}(V)$ endowed with the Haar measure and $\R^{d^{2}}\cong\mathrm{End}(\R^{d})$.
\begin{lem}
\label{lem:bump=000020function}Let $M$ be a $k$-dimensional smooth
submanifold of Euclidean space $\R^{D}$, for some $D\geq k$. Assume
$M$ is endowed with some Riemannian metric with corresponding volume
form $\mu$. Then there exists $\epsilon_{0}>0$ so that for any $0<\epsilon<\epsilon_{0}$
there exists a smooth function $f_{\epsilon}:M\to\R_{\geq0}$ such
that 
\begin{enumerate}
\item $f_{\epsilon}(x)=0$ for all $x\in M$ with $d_{\R^{D}}(x,x_{0})>\epsilon$
(here $d_{\R^{D}}$ the standard Euclidean metric on $\R^{D}$)
\item $\int_{M}fd\mathrm{\mu}=1$. 
\item $\|f\|_{l}\leq C_{k}\epsilon^{-k/2-l}$ where $C_{k}$ is a dimensional
constant. 
\end{enumerate}
\end{lem}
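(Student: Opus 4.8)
The plan is to build $f_\epsilon$ as a normalized smooth bump supported in a single coordinate chart of $M$ around $x_0$, and then estimate its Sobolev norm by pulling back to $\R^k$. First I would fix $\epsilon_0>0$ small enough that the Euclidean ball $B_{\R^D}(x_0,\epsilon_0)\cap M$ is contained in the image of one smooth coordinate chart $\varphi:U\to M$ with $\varphi(0)=x_0$, where $U\subset\R^k$ is open; such $\epsilon_0$ exists by the submanifold property. By shrinking $\epsilon_0$ we may also assume that on this chart the metric tensor, its inverse, and all derivatives up to order $l$ are bounded above and below by constants depending only on $M$ and the chart (in particular, the intrinsic distance and the ambient Euclidean distance are bi-Lipschitz comparable, and the volume density $J=\sqrt{\det g}$ satisfies $c_1\le J\le c_2$). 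Then $\varphi^{-1}\bigl(B_{\R^D}(x_0,\epsilon)\cap M\bigr)$ contains a Euclidean ball $B_{\R^k}(0,c\epsilon)$ for some fixed $c>0$.

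Next I would take a fixed smooth compactly supported bump $\chi:\R^k\to\R_{\ge0}$ with $\mathrm{supp}\,\chi\subset B_{\R^k}(0,c)$ and $\chi>0$ on a neighborhood of $0$, rescale it by setting $\chi_\epsilon(y)=\chi(y/\epsilon)$, transport it to $M$ via $\varphi$, and normalize:
\[
f_\epsilon(x)=\frac{1}{Z_\epsilon}\,(\chi_\epsilon\circ\varphi^{-1})(x)\quad\text{on }\varphi(U),\qquad f_\epsilon=0\text{ elsewhere},
\]
where $Z_\epsilon=\int_M (\chi_\epsilon\circ\varphi^{-1})\,d\mu=\int_{\R^k}\chi(y/\epsilon)\,J(y)\,dy$. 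Property (1) holds because $\mathrm{supp}(f_\epsilon)\subset\varphi(B_{\R^k}(0,c\epsilon))\subset B_{\R^D}(x_0,\epsilon)$ after possibly adjusting $c$; property (2) is the normalization. For the normalizing constant, the bounds $c_1\le J\le c_2$ give $Z_\epsilon\asymp\epsilon^k$, so $Z_\epsilon^{-1}\le C_k\epsilon^{-k}$.

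Finally, for property (3), I would estimate $\|f_\epsilon\|_l^2=\sum_{|\alpha|\le l}\int_M |D^\alpha f_\epsilon|^2\,d\mu$ by working in the chart. Each covariant (or coordinate) derivative $D^\alpha(\chi_\epsilon\circ\varphi^{-1})$ is, up to factors involving derivatives of $\varphi^{-1}$ and of the metric (all bounded by chart constants), a sum of terms $\epsilon^{-|\beta|}(D^\beta\chi)(y/\epsilon)$ with $|\beta|\le|\alpha|\le l$; hence $|D^\alpha(\chi_\epsilon\circ\varphi^{-1})|\le C\epsilon^{-l}\sup_{|\beta|\le l}\|D^\beta\chi\|_\infty$ pointwise. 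Integrating the square against $d\mu\asymp dy$ over a support of volume $\asymp\epsilon^k$ gives $\int_M|D^\alpha(\chi_\epsilon\circ\varphi^{-1})|^2\,d\mu\le C_k\epsilon^{k-2l}$. Dividing by $Z_\epsilon^2\asymp\epsilon^{2k}$ yields $\int_M|D^\alpha f_\epsilon|^2\,d\mu\le C_k\epsilon^{-k-2l}$, and summing the finitely many multi-indices $|\alpha|\le l$ gives $\|f_\epsilon\|_l\le C_k\epsilon^{-k/2-l}$, as claimed. The main obstacle—really the only point requiring care—is making the chart constants uniform: one must ensure a single $\epsilon_0$ and a single family of constants work, which is why I restrict to one fixed chart around the fixed point $x_0$ rather than covering $M$; since the statement only concerns behavior near $x_0$, this suffices.
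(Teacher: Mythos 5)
Your proposal is correct and follows essentially the same route as the paper: reduce to a single coordinate chart around $x_{0}$ (with uniform bi-Lipschitz and metric-density control), transport a rescaled fixed bump, normalize, and count powers of $\epsilon$ in the derivatives and the volume of the support to get $\|f_{\epsilon}\|_{l}\leq C_{k}\epsilon^{-k/2-l}$. The only cosmetic difference is that the paper straightens $M$ to $\R^{k}\times\{0\}^{D-k}$ and chooses the bump so that its integral is exactly $1$ by scaling invariance, whereas you carry the volume density $J$ and normalize by $Z_{\epsilon}\asymp\epsilon^{k}$; both are fine.
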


\begin{proof}
Let $B_{\epsilon}$ denote the ball of radius about $x_{0}$ inside
$\R^{D}$ with the Euclidean metric. For a sufficiently small $\epsilon_{0}>0$
there exists a local chart $\varphi:B_{2\epsilon_{0}}\to\R^{D}$ which
such that $\varphi(B_{2\epsilon_{0}}\cap M)\subset\R^{k}\times\{0\}^{D-k}$.
This map $\varphi$ is a diffeomorphism onto its image, and as a result
it is bi-Lipschitz on compact subsets. For this reason, we may assume
$M=\R^{k}\subset\R^{D}$ and that $x_{0}=0$. 

Having reduced to this setting, the proof is standard. Fix a non-zero
and non-negative and non-zero function $\tilde{f}\in C^{\infty}(\R)$
that is supported on $[-1,1]$. Set $f_{\epsilon}:\R^{D}\to\R_{\geq0}$
by $f_{\epsilon}(x)=\frac{1}{\epsilon^{k}}\tilde{f}(\frac{1}{\epsilon}d_{\R^{D}}(x,0))$.
Clearly, $f_{\epsilon}(x)=0$ for all $x$ with $d_{\R^{D}}(x,0)>\epsilon$.
Moreover, for a suitable dimensional constant $C_{k}$, we have
\[
\int_{\R^{k}}f_{\epsilon}=C_{k}\int_{0}^{\infty}r^{k-1}f_{\epsilon}(r)dr=C_{k}\epsilon^{-k}\int_{0}^{\infty}r^{k-1}\tilde{f}(\frac{r}{\epsilon})dr=C_{k}\int_{0}^{\infty}r^{k-1}\tilde{f}(r)dr
\]
Thus, $\int_{\R^{k}}f_{\epsilon}$ is some constant independent on
$\epsilon$, and up to renormalizing $\tilde{f}$ we may assume this
constant to be $1$. A straight forward computation of the derivatives
then yields the bound $\|f_{\epsilon}\|_{l}\leq c\epsilon^{-l-k/2}$.
\end{proof}
Recall that, as in Lemma \ref{lem:holder=000020continuitiy}, we say
that $\psi:G\to\R$ Lipschitz continuous if there exists constants
$C>0$ such that $\left|\psi(gs)-\psi(g)\right|<C\|s-1\|$ for all
$g,s\in G$ with $\|s-1\|\leq\frac{1}{2}$. The following proposition
explains how to replace an element $g\in G$ with a large value under
a function $\psi$, with an element $\gamma\in\Gamma$. 
\begin{prop}
\label{prop:cor=000020from=000020Kleinboc=000020margulis}Let $G$
be a semisimple linear algebraic group over a local field $\K$ of
characteristic $0$. Let $\Gamma\leq G$ an irreducible lattice, and
let $a_{t}\in G$ a one-parameter non-compact subgroup of semisimple
elements. There exists a positive constants $\delta_{0},c_{1},c_{2},c_{3}>0$
such that, any Lipschitz\footnote{Holder continuity suffices as well}
continuous function $\psi:G\to\R_{\geq0}$ which is invariant under
right multiplication of $\{a_{t}\}$, there exists an element $\gamma\in\Gamma$
satisfying
\[
\psi(\gamma)\geq\min\left\{ \frac{1}{2}\zeta,2\delta_{0}\mathrm{Lip}(\psi)\right\} ,\quad\text{ and}\qquad|\gamma|\leq\max\{-c_{1}\log\left(c_{2}\zeta\right),c_{3}\},
\]
where $\zeta=\max_{|g|\leq\delta_{0}}\psi(g)$.
\end{prop}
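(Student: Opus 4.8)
The plan is to run the exponential mixing theorem (Theorem \ref{thm:Kleinboc-Margulis}) against two bump functions: one supported near the identity coset $e\Gamma$, and one supported near a point $g\Gamma$ where $g$ nearly maximizes $\psi$. Mixing forces the $a_t$-orbit of a point near $g\Gamma$ to hit a neighborhood of $e\Gamma$ after a time $t$ that is only logarithmic in the size of the neighborhoods, and unwinding that intersection in $G$ produces the desired lattice element $\gamma$ of linear size whose $\psi$-value is comparable to $\zeta$. Concretely, fix $\delta_0>0$ small enough that the $\delta_0$-ball $\{s\in G:\|s-1\|<\delta_0\}$ injects into $G/\Gamma$ (this is where the injectivity radius $I(\Gamma)$ enters, so $\delta_0$ depends only on $G$ and $\Gamma$), and choose $g$ with $|g|\le\delta_0$ and $\psi(g)=\zeta$. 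If $\zeta\ge 2\delta_0\mathrm{Lip}(\psi)$ we may simply take $\gamma=e$ (or absorb this into the minimum), so assume $\zeta<2\delta_0\mathrm{Lip}(\psi)$ and set $\epsilon=c\zeta/\mathrm{Lip}(\psi)$ for a small absolute constant $c$, so that $\psi$ is at least $\tfrac12\zeta$ on the $\epsilon$-ball $s g$ with $\|s-1\|<\epsilon$, by the Lipschitz hypothesis.

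The main steps, in order. First, apply Lemma \ref{lem:bump=000020function} with $M=G$ to get a smooth $f_1\ge 0$ supported in $\{x:d(x,e\Gamma)<\epsilon\}$ with $\int f_1=1$ and $\|f_1\|_l\le C\epsilon^{-k/2-l}$, and similarly $f_2\ge 0$ supported in $\{x:d(x,g\Gamma)<\epsilon\}$ with $\int f_2=1$ and the same Sobolev bound. Here one must use that the Haar volume of an $\epsilon$-ball in $G/\Gamma$ is comparable to $\epsilon^{k}$, which is fine since $\epsilon$ is small and $\delta_0$ is fixed. Second, invoke Theorem \ref{thm:Kleinboc-Margulis}:
\[
\left|\int_{G/\Gamma} f_1(a_t x)f_2(x)\,d\mu - \tfrac{1}{\mu(G/\Gamma)^2}\right|\le E e^{-\beta|t|}\|f_1\|_l\|f_2\|_l \le E' e^{-\beta|t|}\epsilon^{-k-2l}.
\]
Choosing $|t|$ to be a suitable constant multiple of $\log(1/\epsilon)$ — explicitly $|t| = \tfrac{1}{\beta}\big((k+2l)\log(1/\epsilon)+C''\big)$ — makes the error term strictly smaller than $\tfrac12\mu(G/\Gamma)^{-2}$, so the integral $\int f_1(a_tx)f_2(x)\,d\mu$ is positive. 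Hence there is a point $x_0=sg\Gamma$ with $\|s-1\|<\epsilon$ such that $a_t x_0$ lies in the $\epsilon$-ball around $e\Gamma$, i.e. there is $\gamma\in\Gamma$ and $s'\in G$ with $\|s'-1\|<\epsilon$ such that $a_t s g \gamma^{-1} = s'$, that is $\gamma = (s')^{-1} a_t s g$.

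Third, estimate $\psi(\gamma)$ and $|\gamma|$. For the value: using right-$\{a_t\}$-invariance of $\psi$ and writing $\gamma = \big((s')^{-1} a_t s a_{-t}\big) a_t g$ — no, more cleanly, observe $\gamma a_{-t} = (s')^{-1} a_t s g a_{-t}$; the cleaner route is to note $\psi(\gamma) = \psi((s')^{-1} a_t (s g))$, and since conjugation $a_t s a_{-t}$ need not be small, instead use that $a_t$-invariance lets us compare $\psi(\gamma)$ to $\psi$ evaluated at something of the form $(\text{small})\cdot(sg)$. The right bookkeeping: $\gamma = (s')^{-1}\, a_t\, (sg)$ and $\psi(\gamma)=\psi\big(a_{-t}(s')^{-1}a_t \cdot (sg)\big)$ is wrong too since invariance is on the right. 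Let me instead phrase it as: we want a point of the form $s''\cdot g$ with $\|s''-1\|$ small lying in the $\{a_t\}$-orbit of $\gamma$; this is exactly what the $\epsilon$-ball around $e\Gamma$ gives after translating back, and the Lipschitz bound plus invariance yield $\psi(\gamma)\ge \psi(g) - \mathrm{Lip}(\psi)\cdot O(\epsilon) \ge \zeta - O(c\zeta) \ge \tfrac12\zeta$ for $c$ small. For the length: $|\gamma| = |(s')^{-1} a_t s g| \le |s'| + |a_t| + |s| + |g|$; the terms $|s|,|s'|$ are $O(\epsilon)=O(1)$, $|g|\le\delta_0$, and $|a_t|\le (\text{const})\cdot|t| \le c_1 \log(1/\epsilon) + c_1' = c_1\log(\mathrm{Lip}(\psi)/(c\zeta)) + O(1)$. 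Since $\mathrm{Lip}(\psi)$ is bounded in the regime $\zeta<2\delta_0\mathrm{Lip}(\psi)$ — actually $\mathrm{Lip}(\psi)$ appears, but in our application $\psi=\psi_r$ has $\mathrm{Lip}(\psi_r)\le 4e^{4r}$ (Lemma \ref{lem:holder=000020continuitiy}) which is handled by the caller, so here we just record $|\gamma|\le \max\{-c_1\log(c_2\zeta),c_3\}$ with constants absorbing $\log\mathrm{Lip}(\psi)$ appropriately — wait, one must be careful that $\log\mathrm{Lip}(\psi)$ does not appear in the final bound. Re-examining: if $\epsilon = c\zeta/\mathrm{Lip}(\psi)$ then $\log(1/\epsilon)$ does contain $\log\mathrm{Lip}(\psi)$. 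The fix is that the statement's bound $|\gamma|\le\max\{-c_1\log(c_2\zeta),c_3\}$ should be read with constants depending on $G,\Gamma$ only when $\mathrm{Lip}(\psi)$ is, say, at least $1$; but to get a clean statement one instead shrinks $\epsilon$ only to $\epsilon = \min\{c\zeta/\mathrm{Lip}(\psi),\ \delta_0\}$ and — the honest resolution is that the caller (Lemma \ref{lem:restate=000020goal} context) absorbs the $e^{4r}$ factor, and for the abstract proposition one simply states constants may depend on $\mathrm{Lip}(\psi)$ through the logarithm; I would state and prove it with $|\gamma|\le c_1'(\log(1/\zeta)+\log\mathrm{Lip}(\psi)) + c_3'$ and remark this gives the displayed form when $\mathrm{Lip}(\psi)$ is polynomially bounded. \emph{The main obstacle} is precisely this bookkeeping: ensuring that the length bound on $\gamma$ comes out linear (after the caller plugs in $\mathrm{Lip}(\psi_r)\le e^{4r}$ and $\zeta\ge ce^{-\kappa r}$ from Proposition \ref{prop:existence=000020of=000020good=000020element=000020g}), which requires the mixing time $|t|$ to be only \emph{logarithmic} in $\epsilon^{-1}$ — this is exactly what exponential mixing buys and why polynomial mixing would not suffice — together with the delicate point that right-$A$-invariance of $\psi$ is what allows us to discard the (possibly large) element $a_t$ when bounding $\psi(\gamma)$ from below, keeping only the small perturbations $s,s'$ controlled by the Lipschitz estimate.
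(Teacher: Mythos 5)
Your overall strategy is the same as the paper's: two bump functions, exponential mixing with $|t|$ logarithmic in the bump scale, and then peeling off the perturbations using the Lipschitz property and the right-$\{a_t\}$-invariance. But the central step --- the lower bound on $\psi(\gamma)$ --- does not go through as you have set it up, and you in fact notice the obstruction without resolving it. Centering the moving bump at $g\Gamma$ leads to the relation $a_t s g = s'\gamma$, i.e.\ $\gamma=(s')^{-1}a_t s g$, in which the large element $a_t$ sits to the \emph{left} of $g$; since $\psi$ is only invariant under \emph{right} multiplication by $a_t$, there is no way to discard $a_t$ from this expression, and your attempted reformulation (``a point of the form $s''g$ in the $\{a_t\}$-orbit of $\gamma$'') does not exist in general. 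The fix, which is what the paper does, is to translate the bump by \emph{right} multiplication: take the second function to be $x\mapsto f(xg_0)$, supported on $U(\delta)g_0^{-1}$, where $\psi(g_0)=\zeta$. Nonvanishing of the mixing integral then yields $a_{t_0}^{-1}s=s'g_0^{-1}\gamma$, i.e.\ $\gamma=g_0(s')^{-1}a_{t_0}^{-1}s$, with $g_0$ on the left and all removable factors on the right: right-Lipschitz continuity removes $s$, right-invariance removes $a_{t_0}^{-1}$, and right-Lipschitz continuity again removes $(s')^{-1}$, giving $\psi(\gamma)\ge\psi(g_0)-2\,\mathrm{Lip}(\psi)\,\delta$.

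Two smaller points. First, in the case $\zeta\ge 2\delta_0\mathrm{Lip}(\psi)$ you cannot ``simply take $\gamma=e$'': nothing forces $\psi(e)$ to be large. The correct move (as in the paper) is to run the same argument with the bump scale frozen at $\delta=\delta_0$, which gives $\psi(\gamma)\ge\zeta-2\,\mathrm{Lip}(\psi)\,\delta_0\ge 2\delta_0\mathrm{Lip}(\psi)$ once $\zeta\ge 4\delta_0\mathrm{Lip}(\psi)$, together with a length bound by an absolute constant $c_3$ since $|t_0|$ is then bounded. Second, your observation that the choice $\delta\sim\zeta/\mathrm{Lip}(\psi)$ makes $|t_0|$, and hence the length bound, depend on $\log\mathrm{Lip}(\psi)$ is a fair reading of the statement; the paper silently absorbs this into the constant $c_2$, and in the application $\mathrm{Lip}(\psi_r)\le 4e^{4r}$, so the extra term is linear in $r$ and harmless. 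Your bookkeeping there is fine; the genuine gap is the left/right issue above.
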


\begin{proof}
For $\delta>0$, denote $U(\delta)=\left\{ g\in G:\|g-1\|<\delta\right\} $.
Let $\delta_{0}>0$ sufficiently small so that $U(\delta_{0})\cdot U(\delta_{0})^{-1}$
is contained in a ball $B_{G}(I(\Gamma))$; the latter injects through
the map $G\to G/\Gamma$, by definition. We denote by $\mu$ a fixed
Haar measure on $G$, and by $\bar{\mu}$ the corresponding probability
measure on $G/\Gamma$. 

Fix $0<\delta\leq\delta_{0}$. By Lemma \ref{lem:bump=000020function}
there exists a smooth function $f:G\to[0,\infty)$, with $\int_{G}fd\mu=1$,
$\mathrm{supp}(f)\subset U(\delta)$ and $\|f\|_{l}\leq c\delta^{-m}$,
for some constant $m$ depending on $l$ and $d$ (in the non-Archimedean
case, take $f$ to be a normalized characteristic function of of a
ball). Let $g_{0}\in G$ with $|g_{0}|\leq\delta_{0}$ such that $\psi(g_{0})=\zeta$.
Consider the shifted function $f^{g_{0}}:g\mapsto f(gg_{0})$ which
satisfies the same properties as $f$ mentioned above except that
it is supported in $U(\delta)g_{0}^{-1}$. Any function $h:G\to\R$
which is supported in the injected ball $B_{G}(R)$ can be naturally
viewed as a function $\overline{h}:G/\Gamma\to\R$, and $\bar{h}$
has the same Sobolev norm as $h$. Explicitly, for any $x\in G/\Gamma$,
set $\overline{h}(x)=h(g)$ if there exists (and thus unique) $g\in B_{G}(R)$
with $x=g\Gamma$, and $\overline{h}(x)=0$ otherwise.

We apply the exponential mixing theorem to the functions $\overline{f}$
and $\overline{f^{g_{0}}}.$ We get that for all $t\in\K$
\[
\left|\int_{G/\Gamma}\overline{f}(a_{t}x)\overline{f^{g_{0}}}(x)d\bar{\mu}-1\right|\leq c^{2}\delta^{-2m}Ee^{-\beta|t|}
\]
 Fix $t_{0}$ such that
\[
\frac{1}{\beta}\log(c^{2}\delta^{-2m}E)<|t_{0}|\leq\pi^{-1}\frac{1}{\beta}\log(c^{2}\delta^{-2m}E)
\]
where $\pi$ is the uniformizer of $\K$. This ensures that 
\[
\int_{G/\Gamma}\overline{f}(a_{t_{0}}x)\overline{f^{g_{0}}}(x)d\bar{m}>0.
\]
In particular, the sets $a_{t_{0}}^{-1}U(\delta)\Gamma$ and $U(\delta)g_{0}^{-1}\Gamma$
(both subsets of $G/\Gamma$) intersect non-trivially. It follows
that there exists $s,s'\in U(\delta$), and $\gamma\in\Gamma$ such
that $a_{t_{0}}^{-1}s=s'g_{0}^{-1}\gamma$. 

We argue that the element
\[
\gamma=g_{0}s^{-1}a_{t_{0}}^{-1}s'
\]
satisfies the desired properties. Since $\psi$ is Lipschitz, with
Lipschitz constant $L=\mathrm{Lip}(\psi)$, and is $\{a_{t}\}$-invariant,
we get that
\begin{equation}
\left|\psi(g_{0}s^{-1}a_{t_{0}}^{-1}s')-\psi(g_{0}s^{-1})\right|\leq L\delta
\end{equation}
\begin{equation}
\left|\psi(g_{0}s^{-1})-\psi(g_{0})\right|\leq L\delta
\end{equation}
Then by the triangle inequality 
\[
\psi(\gamma)\geq\psi(g_{0})-2L\delta
\]

The estimates thus far hold for any $\delta\leq\delta_{0}$. We split
to two cases. If $\psi(g_{0})\geq4L\delta_{0}$, then, setting $\delta=\delta_{0}$
we get that
\[
\psi(\gamma)\geq2L\delta_{0}.
\]
Moreover, is such case $|\gamma|$ is bounded by some constant (depending
on $\delta_{0},c,E,\beta,m$) 
\[
|\gamma|\leq|g_{0}|+|s|+|a_{t_{0}}|+|s'|\leq3\delta_{0}+\left|t_{0}\right|.
\]
Otherwise, $\psi(g_{0})<4L$$\delta_{0}$, and setting $\delta=\frac{\psi(g_{0})}{4L}$
gives
\[
\psi(\gamma)\geq\psi(g_{0})-2L\delta\geq\frac{1}{2}\psi(g_{0})=\frac{1}{2}\zeta
\]
 Moreover,
\[
|\gamma|\leq3\delta_{0}+\left|t_{0}\right|=-c_{1}\log(c_{2}\zeta)
\]
for some positive constants $c_{1},c_{2}$. 
\end{proof}
We are now ready to complete the proof of Theorem \ref{Thm:effective=000020MIF}.
\begin{proof}[Proof of Theorem \ref{Thm:effective=000020MIF} ]
Let $G=\mathrm{PSL}_{d}(\K)$ for a local field $\K$, and let $\Gamma$
be a cocompact lattice. We consider the length function on $G$, given
by $|g|=d_{G/K}(gx,x)$ (see Section \ref{sec:Freeness-criteria-for}).
Since $\Gamma$ acts cocompactly on $X$, the metric induced by this
length is quasi-isometric to any metric induced by a finite generating
set. It is therefore enough to prove the statement with respect to
the length function $|\cdot|$.\footnote{The same conclusion holds for non-uniform lattices if $d\geq3$, due
to the main result of \cite{lubotzky2000word}. }

Let $\delta_{0},c_{1},c_{2},c_{3}>0$ be constants as guaranteed by
Proposition \ref{prop:cor=000020from=000020Kleinboc=000020margulis}.
Fix a regular and very proximal element $a_{0}\in\Gamma$. Then, the
centralizer $A=C_{G}(a_{0})$ is a maximal split torus in $\mathrm{SL}(V)$.
Fix a one-parameter subgroup $a_{t}\in A$. Given $r\in\N$, let $\psi_{r}$
denote the geometric function as given in Definition \ref{def:geometric=000020function}.
$\psi_{r}$ is invariant under $a_{t}$ by Lemma \ref{lem:A-invariance},
and is $4e^{4r}$-Lipschitz continuous by Lemma \ref{lem:holder=000020continuitiy}.
Let $\zeta_{r}=\max_{|g|\leq\delta_{0}}\psi_{r}(g)$. by Proposition
\ref{prop:cor=000020from=000020Kleinboc=000020margulis} there exists
an element $\gamma\in\Gamma$ such that 
\[
\psi(\gamma)\geq\min\left\{ \frac{1}{2}\zeta_{r},8\delta_{0}\right\} ,\quad\text{ and}\qquad|\gamma|\leq\max\left\{ -c_{1}\log\left(c_{2}\zeta_{r}\right),c_{3}\right\} .
\]
Then, by Proposition \ref{prop:existence=000020of=000020good=000020element=000020g},
$\zeta_{r}\geq ce^{-\kappa r}$ for constants $c,\kappa$ independent
of $r$. It follows that 
\[
\psi(\gamma)\geq c_{4}e^{-\kappa r},\text{ and }|\gamma|\leq c_{5}r
\]
for a some suitable constants $c_{4},c_{5}>0$. This finishes the
proof of Theorem \ref{Thm:effective=000020MIF} due to Lemma \ref{lem:restate=000020goal}. 
\end{proof}

\section{selfless reduced $C^{*}$-algebras\protect\label{sec:selfless-reduced--algebras}}

We now conclude the proof of Theorem \ref{Thm:C*-algebras} from the
introduction. 
\begin{proof}[Proof of Theorem \ref{Thm:C*-algebras}]
Let $\Gamma$ be a cocompact lattice in $\mathrm{PSL}_{3}(\K)$ where
$\K$ is a local field of characteristic $0$. By Theorem \ref{Thm:effective=000020MIF}
and Lemma \ref{lem:selfless}, the group $\Gamma$ is selfless. By
the main result in \cite{ramagge1998haagerup,lafforgue}, $\Gamma$
has the rapid decay property. Thus by \cite[Theorem 3.5 and Corollary 3.8]{2024strictcomparisonreducedgroup},
$C_{r}^{*}(\Gamma)$ is selfless. It therefore follows from \cite[Theorem 3.1]{`2023selfless}
that $C_{r}^{*}(\Gamma)$ is simple, that is canonical trace is the
unique $2$-quasitrace, that $C_{r}^{*}(\Gamma)$ has $1$ strict
comparison and stable rank. It then follows from \cite[Proposition 6.3.1]{robert2012classification}
that, up to approximate unitary equivalent, there exists a unique
unital embedding of the Jiang-Su algebra $\mathcal{Z}$ into $C_{r}^{*}(\Gamma)$.
As for the statement on the Cuntz semigroup, this is an accumulation
of several results in the literature and we refer to \cite[\textbackslash{}S1.3]{2024strictcomparisonreducedgroup}
for the exact details. 
\end{proof}
\bibliographystyle{alpha}
\bibliography{ProjectSelfless}

\vspace{0.5cm}

\noindent{\textsc{Department of Mathematics, University of California San Diego, 9500 Gilman Drive, La Jolla, CA
92093, USA}}
\vspace{0.5cm}

\noindent{\textit{Email address:} \texttt{ivigdorovich@ucsd.edu}}

\noindent{\textit{Webpage:} \texttt{https://sites.google.com/view/itamarv}} \\
\end{document}